\newtheorem{theorem}{Theorem}[section]
\newtheorem{lemma}[theorem]{Lemma}
\newtheorem{proposition}[theorem]{Proposition}
\newtheorem{corollary}[theorem]{Corollary}
\theoremstyle{definition}
\newtheorem{definition}[theorem]{Definition}
\newtheorem{example}[theorem]{Example}
\newtheorem{remark}[theorem]{Remark}
\newtheorem{definition-and-remark}[theorem]{Definition and Remark}
\newtheorem{notation}[theorem]{Notation}
\newtheorem{notation-and-remark}[theorem]{Notation and Remark}
\newtheorem{remark-and-notation}[theorem]{Remark and Notation}
\newtheorem{remark-and-definition}[theorem]{Remark and Definition}
\newtheorem{ad-hoc-item}[theorem]{}
\newcommand{\bC}{ \mathbb{C} }
\newcommand{\bE}{ \mathbb{E} }
\newcommand{\bN}{ \mathbb{N} }
\newcommand{\bR}{ \mathbb{R} }
\newcommand{\bZ}{ \mathbb{Z} }
\newcommand{\area}{\mathrm{area}}
\newcommand{\InT}{ \mathrm{Int} }
\newcommand{\OuT}{ \mathrm{Out} }
\newcommand{\Area}{A}
\newcommand{\NCord}{ NC^{\mathrm{(mton)}} }
\newcommand{\parent}{ \mathfrak{p} }
\newcommand{\pairparent}{ \mathfrak{pp} }
\newcommand{\CPair}{C^{ (\mathrm{pair})} }
\newcommand{\pp}{r}
\newcommand{\ur}{\underline{r}}
\title[Statistics on monotonically ordered non-crossing 
partitions]{Statistics on monotonically ordered \\
non-crossing partitions}
\author[N. Blitvic]{Natasha Blitvic}
\address{Natasha Blitvic: School of Mathematical Sciences, 
         Queen Mary University of London, London UK.} 
\thanks{NB: research supported by grant EP/V048902 from EPSRC, UK}
\email{n.blitvic@qmul.ac.uk}
\author[T. Bray]{Thomas Bray}
\address{Thomas Bray: Department of Pure Mathematics, 
         University of Waterloo, Ontario, Canada.} 
\email{tgrbray@uwaterloo.ca}
\author[J. Campbell]{Jacob Campbell}
\address{Jacob Campbell: Department of Mathematics, 
University of Virginia, Charlottesville, Virginia, USA.}
\email{cgh6uv@virginia.edu}
\author[A. Nica]{Alexandru Nica}
\thanks{AN: research supported by a Discovery Grant from 
	NSERC, Canada.}
\address{Alexandru Nica: Department of Pure Mathematics, 
	University of Waterloo, Ontario, Canada.}
\email{anica@uwaterloo.ca}
\begin{document}

\begin{abstract}
We study some combinatorial statistics defined on the set $\NCord (n)$ of 
monotonically ordered non-crossing partitions of $\{ 1, \ldots , n \}$, and 
on the set $\NCord_2 (2n)$ of monotonically ordered non-crossing 
pair-partitions of $\{ 1, \ldots , 2n \}$. 
Unlike in the analogous results known for unordered non-crossing partitions, the 
computations of expectations and variances for natural block-counting statistics 
on $\NCord (n)$ and for the expectation of the area statistic on $\NCord_2 (2n)$ 
turn out to yield a logarithmic regime.  An important role in our study is played
by a nice tree structure on the disjoint union of the $\NCord (n)$'s, which we use 
to streamline our arguments.  As an illustration of how these ideas can be applied 
to calculations of cumulants in monotone probability, we discuss some combinatorial 
aspects of the monotonic Poisson process.

\end{abstract}

\maketitle

\section{Introduction}

\subsection{\texorpdfstring{\boldmath{$NC(n)$}}{NC(n)} 
and \texorpdfstring{\boldmath{$\NCord (n)$}}{NCord(n)}.
The \texorpdfstring{\boldmath{$\NCord$}}{NCord}-tree structure.}

$\ $

\noindent
We will use the customary notation $NC(n)$ for the set of non-crossing partitions
of $\{ 1, \ldots , n \}$.  The elements of $NC(n)$ are thus of the form 
$\pi = \{ V_1, \ldots , V_k \}$, where $V_1, \ldots , V_k$ (the {\em blocks} of the 
partition $\pi$) are non-empty pairwise disjoint subsets of $\{ 1, \ldots , n \}$, 
with $V_1 \cup \cdots \cup V_k = \{ 1, \ldots , n \}$, and where it is not possible
to find $a_1 < a_2 < a_3 < a_4$ in $\{ 1, \ldots , n \}$ such that 
$a_1, a_3 \in V_i$ and $a_2, a_4 \in V_j$ for some $i \neq j$.  The $NC(n)$'s are one 
of the many combinatorial structures counted by Catalan numbers.  For a quick introduction 
to some of their basic properties, one can e.g.~consult \cite[Lecture 9]{NiSp2006}.

This paper is concerned with sets of 
{\em monotonically ordered} non-crossing partitions.

\vspace{6pt}

\begin{definition}
Let $n$ be a positive integer.

\noindent
$1^o$ Let $\pi = \{ V_1, \ldots , V_k \} \in NC(n)$.  One has a nesting relation 
among the blocks of $\pi$, where ``$V_i$ nested inside $V_j$'' means, by definition, 
that $\min (V_i) > \min (V_j)$ and $\max (V_i) < \max (V_j)$.  We will 
use the term {\em monotonic ordering of $\pi$} to refer to a bijection 
$u : \pi \to \{ 1, \ldots , k \}$ having the property that
$\bigl[ \mbox{$V_i$ nested inside $V_j$} \bigr] \, \Rightarrow
\, \bigl[ \, u(V_i) > u(V_j) \, \bigr].$  

\noindent
$2^o$ We denote
$\NCord (n) := \bigl\{ ( \pi, u ) 
\mid \pi \in NC(n) \mbox{ and $u$ is a monotonic ordering of } \pi \bigr\}$.
\end{definition}

\vspace{6pt}

\begin{remark}   \label{rem:12}
For $( \pi, u ) \in \NCord (n)$, one can think of $u$ as ``a possible history 
of how $\pi$ was created, via successive insertions of intervals''.  In order to 
clarify what this means, let us write explicitly $\pi = \{ V_1, \ldots , V_k \}$ 
and let us note that the block $V_{j_o} := u^{-1} (k) \in \pi$ is an interval of 
$\{ 1, \ldots , n \}$. 
(If not, then there would exist $i \neq j_o$ in $\{ 1, \ldots , k \}$ such that 
$V_i$ is nested inside $V_{j_o}$, hence such that $u( V_i ) > u( V_{j_o} ) = k$, 
in contradiction with $u( V_i ) \in \{ 1, \ldots , k \}$.)  One can view the
insertion of the interval $V_{j_o}$ as the last step in a $k$-step process which 
created $\pi$, via successive insertions of intervals which eventually became the 
blocks $u^{-1} (1), \ldots , u^{-1} (k)$, in this order.
For illustration, Figure 1 shows how this process works to create a monotonically 
ordered partition with $4$ blocks in $NC(9)$.

On a heuristic level, one could say that the relation between a $( \pi, u ) \in \NCord (n)$ 
and the underlying $\pi \in NC(n)$ is akin to the relation one has, in the representation 
theory of symmetric groups, between a standard Young tableau and the underlying Young diagram
-- the tableau records a possible history of how the Young diagram was created, via addition 
of boxes.  From this perspective, the $\NCord$-tree that we will next look at is an analogue 
of the tree of monotonic paths of the well-known Young graph (as presented, for instance, in 
Section 3.1 of the monograph \cite{BoOl2017}).
\end{remark}

\begin{figure}[t]
    \begin{tikzpicture}[scale=0.5]
        \draw[line width=1.5pt] (0,0.1) -- (0,-0.9) -- (3,-0.9) -- (3,0.1);
        \draw[line width=1.5pt] (1,0.1) -- (1,-0.9);
        \draw[line width=1.5pt] (2,0.1) -- (2,-0.9);
        \node at (0.3,-0.4) { \bf{1} };

        \node at (4,-0.5) {$\rightarrow$};

        \draw (5,0) -- (5,-1.5) -- (9,-1.5) -- (9,0);
        \draw (6,0) -- (6,-1.5);
        \draw (7,0) -- (7,-1.5);
        \draw[line width=1.5pt] (8,0.1) -- (8,-0.9);
        \node at (5.3,-1) {$1$};
        \node at (8.3,-0.4) { \bf{2} };

        \node at (10,-0.5) {$\rightarrow$};

        \draw (11,0) -- (11,-1.5) -- (17,-1.5) -- (17,0);
        \draw (12,0) -- (12,-1.5);
        \draw[line width=1.5pt] (13,0.1) -- (13,-0.9) -- (14,-0.9) -- (14,0.1);
        \draw (15,0) -- (15,-1.5);
        \draw (16,0) -- (16,-1);
        \node at (11.3,-1) {$1$};
        \node at (13.3,-0.4) { \bf{3} };
        \node at (16.3,-0.5) {$2$};

        \node at (18,-0.5) {$\rightarrow$};

        \draw[line width=1.5pt] (19,0.1) -- (19,-0.9) -- (20,-0.9) -- (20,0.1);
        \draw (21,0) -- (21,-1.5) -- (27,-1.5) -- (27,0);
        \draw (22,0) -- (22,-1.5);
        \draw (23,0) -- (23,-1) -- (24,-1) -- (24,0);
        \draw (25,0) -- (25,-1.5);
        \draw (26,0) -- (26,-1);
        \node at (19.3,-0.4) { \bf{4} };
        \node at (21.3,-1) {$1$};
        \node at (23.3,-0.5) {$3$};
        \node at (26.3,-0.5) {$2$};
    \end{tikzpicture}
    \label{fig:1}
    \caption{ {\em $\pi = \bigl\{ \, \{1,2\}, \, \{3,4,7,9\},
               \, \{5,6\}, \, \{8\} \, \bigr\} \in NC(9)$ built in 4 steps, 
            by inserting intervals.  The building process gives a monotonic ordering $u$ 
            of $\pi$: every block of $\pi$ retains the label assigned to it (in 
            boldface font) at the moment when it was inserted in the picture, thus giving
            $u ( \, \{3,4,7,9\} \, ) = 1$,  $u ( \, \{8\} \, ) = 2$, 
            $u ( \, \{5,6\} \, ) = 3$,  $u ( \, \{1,2\} \, ) = 4$.} }
\end{figure}


\begin{remark}  \label{rem:13}
{\em (The $\NCord$-tree.)} 
In the considerations of the present paper, a crucial role will be played 
by a tree structure with vertex set
$\NCord := \sqcup_{n=1}^{\infty} \NCord (n)$ (disjoint union), where the 
unique element of $\NCord (1)$ serves as root and where, for every $n \in \bN$, 
the elements of $\NCord (n)$ are at distance $n-1$ from the root.
The formal description of the edges of this $\NCord$-tree is given in 
Section \ref{subsection:2-1} below.  Roughly speaking, it is based on the
interpretation of every $( \pi, u) \in \NCord (n)$ as possible history for 
how the underlying $\pi = \{ V_1, \ldots , V_k \} \in NC(n)$ was created, 
but where we amend the $k$-step creation process described in 
Remark \ref{rem:12}, and make it become an {\em $n$-step} creation process 
(use the number $n$ of points that are being partitioned, rather than the 
number $k$ of blocks of $\pi$).
That is: instead of ``intervals inserted from above'' as in Figure 1, we will now 
look at insertions of individual points, as shown in Figure 2.  At every step, the 
new incoming point: either starts a new block of the partition which is 
being built, or appends itself to the highest-numbered block (necessarily an 
interval block) that already existed in the picture.  Specifically, Figure 2 shows 
the same partition $\pi \in \NCord (9)$ as in Figure 1, where the construction of 
$\pi$ now takes $9$ steps (rather than $4$ steps, as before).
\end{remark}

\begin{figure}[b]
    \begin{tikzpicture}[scale=0.5]
        \draw[line width=1.5pt] (-0.3,0) -- (-0.3,-1);
        \node at (0,-0.4) { \bf{1} };
        
        \node at (1.5,-0.5) {$\longrightarrow$};

        \draw (3,0) -- (3,-1) -- (4,-1);
        \draw[line width=1.5pt] (4,-1) -- (4,0);
        \node at (3.3,-0.4) {$1$};

        \node at (5.5,-0.5) {$\longrightarrow$};

        \draw (7,0) -- (7,-1) -- (9,-1);
        \draw (8,0) -- (8,-1);
        \draw[line width=1.5pt] (9,0) -- (9,-1);
        \node at (7.3,-0.4) {$1$};

        \node at (10.5,-0.5) {$\longrightarrow$};

        \draw (12,0) -- (12,-1) -- (15,-1);
        \draw[line width=1.5pt] (15,0) -- (15,-1);
        \draw (13,0) -- (13,-1);
        \draw (14,0) -- (14,-1);
        \node at (12.3,-0.4) {$1$};

        \node at (13.5,-2.5) {$\Big\downarrow$};

        \draw (11.5,-4) -- (11.5,-5.5) -- (15.5,-5.5) -- (15.5,-4);
        \draw (12.5,-4) -- (12.5,-5.5);
        \draw (13.5,-4) -- (13.5,-5.5);
        \draw[line width=1.5pt] (14.5,-4) -- (14.5,-5);
        \node at (11.8,-4.4) {$1$};
        \node at (14.8,-4.4) { \bf{2} };

        \node at (10,-4.5) {$\longleftarrow$};

        \draw (8.5,-4) -- (8.5,-5.5) -- (3.5,-5.5) -- (3.5,-4);
        \draw (7.5,-4) -- (7.5,-5);
        \draw (6.5,-4) -- (6.5,-5.5);
        \draw[line width=1.5pt] (5.5,-4) -- (5.5,-5);
        \draw (4.5,-4) -- (4.5,-5.5);
        \node at (3.8,-4.4) {$1$};
        \node at (5.8,-4.4) { \bf{3} };
        \node at (7.8,-4.4) {$2$};

        \node at (2,-4.5) {$\longleftarrow$};

        \draw (0.5,-4) -- (0.5,-5.5) -- (-5.5,-5.5) -- (-5.5,-4);
        \draw (-4.5,-4) -- (-4.5,-5.5);
        \draw (-3.5,-4) -- (-3.5,-5) -- (-2.5,-5);
        \draw[line width=1.5pt] (-2.5,-4) -- (-2.5,-5);
        \draw (-1.5,-4) -- (-1.5,-5.5);
        \draw (-0.5,-4) -- (-0.5,-5);
        \node at (-5.2,-4.4) {$1$};
        \node at (-3.2,-4.4) {$3$};
        \node at (-0.2,-4.4) {$2$};

        \node at (-2.5,-7) {$\Big\downarrow$};

        \draw[line width=1.5pt] (-6,-8.5) -- (-6,-9.5);
        \draw (-5,-8.5) -- (-5,-10) -- (1,-10) -- (1,-8.5);
        \draw (-4,-8.5) -- (-4,-10);
        \draw (-3,-8.5) -- (-3,-9.5) -- (-2,-9.5) -- (-2,-8.5);
        \draw (-1,-8.5) -- (-1,-10);
        \draw (0,-8.5) -- (0,-9.5);
        \node at (-5.7,-8.9) { \bf{4} };
        \node at (-4.7,-8.9) {$1$};
        \node at (-2.7,-8.9) {$3$};
        \node at (0.3,-8.9) {$2$};

        \node at (2.5,-9) {$\longrightarrow$};

        \draw (4,-8.5) -- (4,-9.5) -- (5,-9.5);
        \draw[line width=1.5pt] (5,-8.5) -- (5,-9.5);
        \draw (6,-8.5) -- (6,-10) -- (12,-10) -- (12,-8.5);
        \draw (7,-8.5) -- (7,-10);
        \draw (8,-8.5) -- (8,-9.5) -- (9,-9.5) -- (9,-8.5);
        \draw (10,-8.5) -- (10,-10);
        \draw (11,-8.5) -- (11,-9.5);
        \node at (4.3,-8.9) {$4$};
        \node at (6.3,-8.9) {$1$};
        \node at (8.3,-8.9) {$3$};
        \node at (11.3,-8.9) {$2$};
    \end{tikzpicture}
    
    \label{fig:2}
    \caption{ {\em $\pi = \bigl\{ \, \{1,2\}, \, \{3,4,7,9\},
               \, \{5,6\}, \, \{8\} \, \bigr\} \in NC(9)$ built in 9 steps,  
            by inserting points.  The building process gives the path in the $\NCord$-tree which 
            starts at the root of the tree and arrives at the monotonically ordered non-crossing 
            partition from Figure 1.} }
\end{figure}

\vspace{1cm}

\begin{remark}   \label{rem:14}
{\em (Homogeneity property of the $\NCord$-tree.)}  
One has that:
\begin{equation}   \label{eqn:14a}
\left\{  \begin{array}{c}
\mbox{for every $n \in \bN$, every
$( \pi , u ) \in \NCord (n)$}   \\
\mbox{has precisely $n+2$ children in the $\NCord$-tree.}
\end{array}  \right.
\end{equation}
This is a fact that is easy to verify (cf.~Remark \ref{rem:25} below),
but nevertheless has great benefits towards the study of block-counting 
statistics on $\NCord (n)$, because it provides recursions satisfied by 
such statistics.  A number of results obtained in this way are described
in the next subsection.

We note that, as a byproduct, the homogeneity from (\ref{eqn:14a}) can be used
as an entry point towards some non-commutative probability considerations.  It is 
relevant to mention here that $NC(n)$ and $\NCord (n)$ play crucial roles in the 
combinatorics of two brands of independence for non-commutative random variables, 
{\em free independence} and respectively {\em monotonic independence}.  In particular, 
each of these of two brands of independence uses a notion of {\em cumulants} for the 
non-commutative random variables that are considered, where the cumulants of free 
probability are defined (see e.g.~\cite[Lecture 11]{NiSp2006}) with the help of the 
$NC(n)$'s, while the cumulants of monotonic probability are defined (cf.~\cite{HaSa2011}) 
with the help of the $\NCord (n)$'s.  
The homogeneity (\ref{eqn:14a}) can be used in order to obtain recursion formulas for 
monotonic cumulants.  The present paper does not aim to detail this direction,
but there is one immediate example that felt worth describing,
concerning the recursion for the monotonic cumulants of the so-called ``monotonic Poisson
process'' -- cf. Section \ref{section:5} below, where we succinctly review the monotonic
Poisson process, and use (\ref{eqn:14a}) to retrieve some results from \cite{Be2006} about
the cumulants of the said process.
\end{remark}

\vspace{10pt}

\subsection{Block-counting statistics 
on \texorpdfstring{\boldmath{$\NCord (n)$}}{NCord(n)}.}
\label{subsection:1-2}

$\ $

\noindent
For motivation, we mention that there exists a fair amount of research literature
(see e.g.~\cite{Ar2012}, \cite{Or2012}, or the more recent presentation made in 
\cite{Ka2020}) which is
dedicated to the study of block-statistics on $NC(n)$, with $NC(n)$ viewed as 
a uniform probability space.  The calculations made in this regime often yield 
quantities that, asymptotically, grow linearly with $n$.  For instance if 
$X_n : NC(n) \to \bN$ is the random variable that counts blocks,
$X_n ( \pi ) := | \pi | = k$ for $\pi = \{ V_1, \ldots , V_k \} \in NC(n)$, then 
the expectation and variance of $X_n$ are found to be
\begin{equation}   \label{eqn:1-2a}
E[X_n] = \frac{n+1}{2} \ \mbox{ and } 
\ \mathrm{Var} [X_n] = \frac{n^2 -1}{4(2n-1)} \sim \frac{n}{8} 
\end{equation}
(see e.g.~\cite[Theorem 2.1]{Ka2020}).  Another relevant result is that upon 
fixing an $\ell \in \bN$ and considering the random variables 
$X_n^{( \ell )} : NC(n) \to \bN \cup \{ 0 \}$ which are defined by
\[
X_n^{( \ell )} ( \pi ) := 
\ \vline \, \{ V \in \pi \mid \, |V| = \ell \} \ \vline \ ,
\ \ \pi \in NC (n),
\]
one gets that $E[ X_n^{ ( \ell ) } ] \sim n/2^{\ell + 1}$ for $n \to \infty$
(see e.g.~\cite[Corollary 4.3]{Or2012}).

In the present paper we look at the analogous block-counting random variables,
in the parallel world of $\NCord (n)$.  

\vspace{6pt}

\begin{remark-and-notation}   \label{rem:15}
$1^o$ For every $n \in \bN$, we view $\NCord (n)$ as a uniform probability space,
where every $( \pi, u ) \in \NCord (n)$ has probability $1/ | \NCord (n) |$. 
The latter cardinality is $| \NCord (n) | =  (n+1)!/2$, 
as had been noted in the literature related to monotonic probability
(cf.~\cite[Proposition 3.4]{ArHaLeVa2015}), and can also be easily derived from the 
homogeneity property (\ref{eqn:14a}) of the $\NCord$-tree.

\vspace{6pt}

\noindent
$2^o$ For every $n \in \bN$, we let $Y_n : \NCord (n) \to \bN$ be the
random variable defined by 
\begin{equation}   \label{eqn:1-2b}
Y_n ( \pi , u ) := | \pi | 
\mbox{ (number of blocks of $\pi$),} \ \ ( \pi, u ) \in \NCord (n).
\end{equation}
The next theorem gives the expectation and variance of $Y_n$.  Unlike the formulas 
(\ref{eqn:1-2a}) concerning the $X_n$'s, the formulas for the $Y_n$'s show a 
logarithmic regime.  In the statements of the latter formulas we will use the 
standard notation
\[
H_n := 1 + \frac{1}{2} + \cdots + \frac{1}{n}, 
\ \mbox{ ($n$-th harmonic number, for $n \in \bN$),}
\]
and $\gamma := \lim_{n \to \infty} H_n - \ln n$ (the Euler-Mascheroni constant).
\end{remark-and-notation}

\vspace{6pt}

\begin{theorem}   \label{thm:16}
Let the random variables $Y_n : \NCord (n) \to \bN$ be 
as defined in (\ref{eqn:1-2b}). 

\vspace{6pt}

\noindent
$1^o$ One has 
$E[ \, Y_n \, ] = n - H_n + \frac{3}{2} - \frac{1}{n+1}, \ \ n \geq 2.$
As a consequence, it follows that
\begin{equation}   \label{eqn:16b}
E[ \, Y_n \, ] \approx (n - \ln n) + \bigl( \, \frac{3}{2} - \gamma \, \bigr)
\mbox{ for } n \to \infty,
\end{equation}
where the meaning of ``$\approx$'' in (\ref{eqn:16b}) is that the 
difference of the quantities on its two sides goes to $0$ for $n \to \infty$.

\vspace{6pt}

\noindent
$2^o$ One has $\mathrm{Var} [Y_n] = H_n - H_n^{(2)} - \frac{(n-1)^2}{4(n+1)^2}, 
\ \ n \geq 2,$
where $H_n^{(2)} := 1 + \frac{1}{2^2} + \cdots + \frac{1}{n^2}.$
As a consequence, it follows that 
\begin{equation}  \label{eqn:16d}
\mathrm{Var} [Y_n] \approx 
\ln n - \bigl( \frac{\pi^2}{6} + \frac{1}{4} - \gamma \bigr),
\mbox{ for $n \to \infty$}.
\end{equation}
\end{theorem}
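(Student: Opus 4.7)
The plan is to reduce both moments to a one-step recursion along the $\NCord$-tree, and then to solve the resulting difference equations by telescoping. The key local fact I would first establish, refining the homogeneity statement (\ref{eqn:14a}), is the following: for any parent $(\pi,u) \in \NCord(n)$, its $n+2$ children in $\NCord(n+1)$ split into exactly $n+1$ children with $Y$-value $Y_n(\pi,u)+1$ (coming from the insertion of a new singleton block in one of the $n+1$ available positions, which necessarily receives the largest monotonic label), together with exactly $1$ child with $Y$-value $Y_n(\pi,u)$ (coming from appending the incoming point to the highest-labeled interval block, as described heuristically in Remark \ref{rem:13}).

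Granting this split, I would pick $(\pi',u')$ uniformly from $\NCord(n+1)$ and observe that, by the homogeneity of the tree, its parent $(\pi,u)$ is uniform on $\NCord(n)$ and that, conditionally on $(\pi,u)$, the increment $Y_{n+1}-Y_n$ is Bernoulli with parameter $(n+1)/(n+2)$. Taking conditional expectation yields the first-moment recursion
\[
E[\,Y_{n+1}\,] \;=\; E[\,Y_n\,] + \frac{n+1}{n+2}, \qquad n \geq 1,
\]
and the law of total variance, together with the observation that the conditional mean of $Y_{n+1}$ given $(\pi,u)$ equals $Y_n(\pi,u)$ plus a deterministic constant, yields
\[
\mathrm{Var}[\,Y_{n+1}\,] \;=\; \mathrm{Var}[\,Y_n\,] + \frac{n+1}{(n+2)^2}, \qquad n \geq 1.
\]

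To finish $1^o$ I would telescope from $E[Y_1]=1$: writing $(j+1)/(j+2) = 1 - 1/(j+2)$ and summing produces $E[Y_n] = n + 3/2 - H_{n+1}$, after which peeling off the $(n+1)$-th harmonic term via $H_{n+1} = H_n + 1/(n+1)$ yields the stated closed form, and the asymptotic (\ref{eqn:16b}) is immediate from $H_n - \ln n \to \gamma$. For $2^o$ I would telescope the variance recursion from $\mathrm{Var}[Y_1]=0$, writing $(j+1)/(j+2)^2 = 1/(j+2) - 1/(j+2)^2$ to produce $\mathrm{Var}[Y_n] = H_{n+1} - H_{n+1}^{(2)} - 1/4$; converting to $H_n$, $H_n^{(2)}$ and simplifying the resulting rational expression $1/(n+1) - 1/(n+1)^2 - 1/4$ into $-(n-1)^2/[4(n+1)^2]$ produces the stated identity, and the asymptotic (\ref{eqn:16d}) then follows from $H_n^{(2)} \to \pi^2/6$.

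The main obstacle I anticipate is the careful verification, from the formal $\NCord$-tree definition, of the $(n+1)+1$ split of children into ``new-singleton'' and ``append'' types; once that local fact is in hand, the remainder of the argument is routine, with the only mildly delicate point being the algebraic collapse that yields $(n-1)^2/[4(n+1)^2]$.
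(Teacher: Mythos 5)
Your argument is correct, and it reaches the stated closed forms by a route that differs from the paper's in an instructive way. The paper proves this theorem through its general Laplace-transform machinery: it records (Example \ref{example:28}) that the $Y_n$'s are recursive of the first kind with input $(1)$, feeds this into Theorem \ref{thm:32} to get $B_n(t) = (1+nt)B_{n-1}(t)$, hence the product formula $B_n(t) = t(1+2t)\cdots(1+nt)$, and then extracts $E[Y_n]$ and $\mathrm{Var}[Y_n]$ by logarithmic differentiation at $t=1$. You instead exploit the same local fact about the tree --- the split of the $n+2$ children of a vertex of $\NCord(n)$ into $n+1$ singleton-insertions (incrementing $|\pi|$) and one elongation (preserving it), which is exactly the content of Remark \ref{rem:25} and Example \ref{example:28} --- but you use it probabilistically: the parent of a uniform element of $\NCord(n+1)$ is uniform on $\NCord(n)$ (because the tree is homogeneous), and the increment is a Bernoulli$\bigl(\tfrac{n+1}{n+2}\bigr)$ whose parameter does not depend on the parent, so expectation and variance both telescope. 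Your telescoped sums agree with the paper's (e.g.\ $E[Y_n] = n - \sum_{k=3}^{n+1}\tfrac1k$ and $\mathrm{Var}[Y_n] = H_{n+1}-H_{n+1}^{(2)}-\tfrac14$, which does collapse to $H_n - H_n^{(2)} - \tfrac{(n-1)^2}{4(n+1)^2}$). The two approaches encode the same information --- the product formula for $B_n$ is precisely the statement that $Y_n$ is distributed as $1$ plus a sum of independent Bernoulli$(k/(k+1))$ variables, which is the independence you observe --- but yours is more elementary and self-contained for this particular statistic, while the paper's buys generality: the law-of-total-variance collapse relies on the increment law being independent of the parent, which fails for the statistics $Y_n^{(\ell)}$ of Theorem \ref{thm:17} (there the increment depends on $|J(\pi,u)|$), and that is exactly the situation the Laplace-transform recursion of Theorem \ref{thm:32} is designed to handle. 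The one local fact you flag as needing careful verification is supplied verbatim by Remark \ref{rem:25}, so there is no gap.
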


\vspace{6pt}

Suppose next that, for randomly chosen $( \pi, u ) \in \NCord (n)$, 
one tallies the blocks of $\pi$ according to their cardinality.  The next
theorem says that, for large $n$, one should expect $\pi$ to have a lot 
of blocks of size $1$ (about $n - 2 \, \ln n$ of them), a fairly significant 
number of blocks of size $2$ (about $\ln n$ of them), and very few blocks of 
size $\geq 3$.  

\vspace{6pt}

\begin{theorem}   \label{thm:17} 
For every $n, \ell \in \bN$, let $Y_n^{( \ell )} : \NCord (n) \to \bN \cup \{ 0 \}$
be defined by
\begin{equation}   \label{eqn:17a}
Y_n^{( \ell )} ( \pi, u )  
:= \ \vline \, \{ V \in \pi \mid \, |V| = \ell \} \ \vline \ ,
\ \ ( \pi, u ) \in \NCord (n).
\end{equation}
There exist constants $c_1, c_2, \ldots , c_{\ell}, \ldots \in \bR$ such that:
\begin{equation}   \label{eqn:17b}
\left\{  \begin{array}{c}
\lim_{n \to \infty} E[Y_n^{(1)}] - (n - 2 \, \ln n) = c_1, 
\ \lim_{n \to \infty} E[Y_n^{(2)}] - \ln n = c_2,   \\
\mathrm{and} \ \lim_{n \to \infty} E[Y_n^{( \ell )}] = c_{\ell}, 
\ \ \forall \, \ell \geq 3. 
\end{array}  \right.
\end{equation}
\end{theorem}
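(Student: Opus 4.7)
My approach parallels the strategy naturally suggested by Theorem \ref{thm:16}: use the homogeneity property (\ref{eqn:14a}) of the $\NCord$-tree to derive recursions for the un-normalised sums, and then solve them asymptotically. Set $N_n := |\NCord(n)| = (n+1)!/2$ and, for each $\ell \in \bN$,
\[
S_n^{(\ell)} \,:=\, \sum_{(\pi,u) \in \NCord(n)} Y_n^{(\ell)}(\pi,u),
\]
so that $E[Y_n^{(\ell)}] = S_n^{(\ell)}/N_n$. Summing $Y_{n+1}^{(\ell)}$ over $\NCord(n+1)$ by grouping each element with its unique parent in $\NCord(n)$ and using (\ref{eqn:14a}) yields
\[
S_{n+1}^{(\ell)} \,=\, (n+2)\,S_n^{(\ell)} \,+\, \Delta_n^{(\ell)},
\]
where $\Delta_n^{(\ell)}$ records the aggregate change in the size-$\ell$ block count summed over the $n+2$ children of each parent.

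The next step is to evaluate $\Delta_n^{(\ell)}$ from the formal edge description of the $\NCord$-tree (to be introduced in Section \ref{subsection:2-1}). The $n+2$ children of a given $(\pi,u)$ split into those that introduce a new singleton block and those that enlarge an existing block of $\pi$ by one element. A singleton-creating child raises $Y^{(1)}$ by $1$ and leaves every other $Y^{(\ell)}$ unchanged; a child enlarging a block of size $\ell'$ lowers $Y^{(\ell')}$ by $1$ and raises $Y^{(\ell'+1)}$ by $1$. Counting, for each $(\pi,u)$, how many children fall into each sub-type in terms of the block statistics of $(\pi,u)$, I expect a closed recursion of the schematic form
\[
S_{n+1}^{(\ell)} \,=\, (n+2 - \beta_\ell)\, S_n^{(\ell)} \,+\, \alpha_\ell\, S_n^{(\ell-1)} \,+\, \gamma_\ell\, N_n,
\]
with explicit non-negative constants (the $\ell = 1$ case handled separately, with the singleton-creation count playing the role of the would-be $S_n^{(0)}$ term).

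Dividing by $N_{n+1} = (n+2)\, N_n$ converts each recursion into
\[
E[Y_{n+1}^{(\ell)}] - E[Y_n^{(\ell)}] \,=\, \frac{-\beta_\ell \, E[Y_n^{(\ell)}] \,+\, \alpha_\ell \, E[Y_n^{(\ell-1)}] \,+\, \gamma_\ell}{n+2},
\]
which I would then solve by telescoping. For $\ell = 1$, the right-hand side should be asymptotic to $1 - 2/n + O(1/n^2)$, producing the $n - 2\ln n + c_1$ behaviour via a $2 H_n$-type partial sum. For $\ell = 2$, the forcing is driven by $E[Y_n^{(1)}] \sim n$, which contributes a further $H_n$-type summation and hence the $\ln n$ growth together with the constant $c_2$. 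For $\ell \geq 3$, induction on $\ell$ shows that $E[Y_n^{(\ell-1)}]$ is already $O(1)$, so the forcing is summable and $E[Y_n^{(\ell)}]$ converges to a finite $c_\ell$.

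The main obstacle will be the first step: decoding from the formal tree definition exactly how the $n+2$ children of a given $(\pi,u)$ distribute among singleton-creating and block-enlarging types, and within the latter, among the various source block sizes $\ell'$; this is the combinatorial content of expressing $\Delta_n^{(\ell)}$ as a linear combination of $S_n^{(\ell)}$, $S_n^{(\ell-1)}$, and $N_n$. Once those coefficients are in hand, the remaining asymptotic analysis is a standard exercise on linear recursions with harmonic forcing, in the same spirit as the proof of Theorem \ref{thm:16}.
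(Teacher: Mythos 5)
Your overall architecture (group the elements of $\NCord(n+1)$ under their parents, use the homogeneity (\ref{eqn:14a}) to get a recursion for the unnormalised sums, then telescope) is indeed the paper's strategy, and your predicted increment $1-2/n+O(1/n^2)$ for $\ell=1$ matches the paper's $1+\frac{1}{n}-\frac{3}{n+1}$. But the step you yourself flag as ``the main obstacle'' is where the proposal breaks, and the ansatz you propose for $\Delta_n^{(\ell)}$ cannot be made to work. In the $\NCord$-tree a parent $(\rho,v)\in\NCord(n)$ has $n+1$ singleton-creating children and exactly \emph{one} block-enlarging child, and that child always enlarges the one specific block $J(\rho,v)$ carrying the maximal label. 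Hence for $\ell\ge 2$,
\[
\Delta_n^{(\ell)}
=\bigl|\{(\rho,v):|J(\rho,v)|=\ell-1\}\bigr|
-\bigl|\{(\rho,v):|J(\rho,v)|=\ell\}\bigr|,
\]
a statistic governed by the size of the \emph{last-inserted} block, not by the total block-size counts. This is not a constant-coefficient linear combination of $S_n^{(\ell)}$, $S_n^{(\ell-1)}$ and $N_n$: the paper shows (Lemmas \ref{lemma:33} and \ref{lemma:34}, by iterating the parent map $j-1$ times to reduce $|J|=j$ to $|J|=1$) that $|\{(\rho,v)\in\NCord(n):|J(\rho,v)|=j\}|=(n-j+1)\cdot(n-j+1)!/2$, which is asymptotically negligible compared with $S_n^{(j)}$. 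That bijection is the missing idea; it is what produces the shifted terms $B^{(\ell)}_{n-\ell}$, $B^{(\ell)}_{n-\ell-1}$ in the recursion (\ref{eqn:45b}) and the forcing $\frac{(n-\ell)!}{(n+1)!}\bigl[(n-\ell+1)^2-(n-\ell)\bigr]$ in (\ref{eqn:46f}).

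The error is visible in your own asymptotics. With your schematic recursion and a nonzero $\alpha_2$, the $\ell=2$ forcing $\alpha_2\,E[Y_n^{(1)}]/(n+2)\sim\alpha_2$ has constant order and would give \emph{linear}, not logarithmic, growth of $E[Y_n^{(2)}]$; and for $\ell\ge 3$ the coupling to $E[Y_n^{(\ell-1)}]$ would propagate the $\ln n$ upward instead of yielding convergence to a finite $c_\ell$. In the correct recursion the different values of $\ell$ do not couple at all: for each fixed $\ell\ge 2$ the increment $E[Y_n^{(\ell)}]-E[Y_{n-1}^{(\ell)}]$ is $\Theta(1/n)$ when $\ell=2$ (whence $\ln n$) and $O(1/n^2)$ when $\ell\ge 3$ (whence convergence). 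So the fix is not more careful bookkeeping within your ansatz but replacing it: you must count ordered partitions according to $|J(\rho,v)|$, which requires descending $\ell-1$ levels in the tree rather than reading off the block statistics of the current level.
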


\vspace{6pt}

\begin{remark}   \label{rem:18}
$1^o$ Proposition \ref{prop:46} below gives the precise values 
$c_1 = \frac{10}{3} - 2 \gamma$ and $c_2 = \gamma - \frac{17}{8}$.  
The values of $c_{\ell}$ for $\ell \geq 3$ can also be explicitly computed,
but there doesn't appear to be some nice general formula for them (see discussion 
in Remark \ref{rem:47}.2 below).

\vspace{6pt}

\noindent
$2^o$ In the setting of the preceding theorem, one can in fact bundle together all 
the random variables $Y_n^{ ( \ell ) }$ with $\ell \geq 3$ into  
$Y_n^{ ( \geq 3 ) } : \NCord (n) \to \bN \cup \{ 0 \}$, where
\begin{equation}   \label{eqn:18a}
Y_n^{( \geq 3 )} ( \pi, u )  
:= \ \vline \, \{ V \in \pi \mid \, |V| \geq 3 \} \ \vline \ ,
\ \ ( \pi, u ) \in \NCord (n).
\end{equation}
It is still the case that the expectations 
$E[ \, Y_n^{(\geq 3)} \, ]$ have a finite limit for $n \to \infty$. 
Indeed, by writing $Y_n^{ ( \geq 3 ) } = Y_n - Y_n^{(1)} - Y_n^{(2)}$,
with $Y_n$ picked from Theorem \ref{thm:16}, one immediately finds that 
$\lim_{n \to \infty} E[ \, Y_n^{(\geq 3)} \, ] 
= ( \frac{3}{2} - \gamma ) - c_1 - c_2 = \frac{7}{24}$.
\end{remark}

\vspace{6pt}

\begin{remark}   \label{rem:19}
We reiterate the fact that the formulas announced in Theorems \ref{thm:16} and
\ref{thm:17} are derived by starting from the homogeneity property (\ref{eqn:14a}) 
of the $\NCord$-tree.  As an intermediate between the observation from 
(\ref{eqn:14a}) and explicit formulas for expectations and variances, we will put 
into evidence two kinds of recursions that may be satisfied by a sequence of random 
variables $( Z_n : \NCord (n) \to \bR )_{n=1}^{\infty}$.

\vspace{6pt}

\noindent
-- The recursion of the first kind uses for input a tuple 
$\ur = (r_1, \ldots , r_k) \in \bR^k$, for some $k \in \bN$.  We will introduce it 
in Section \ref{subsection:2-2}, then in Section \ref{section:4} we will show how 
this kind of recursion is used to approach the random variables $Y_n$ and 
$Y_n^{ ( \ell ) }$ from Theorems \ref{thm:16} and \ref{thm:17} (both these theorems 
are consequences of the same general result about the recursion of the first kind, 
given in Theorem \ref{thm:32}).

\vspace{6pt}

\noindent
-- The recursion of the second kind uses for input a triple 
$( \alpha, \beta ; q )$, with $\alpha, \beta, q \in \bZ$.
We will introduce it in Section \ref{subsection:2-3}, and we will discuss 
it further in Section \ref{section:6}.  This recursion of the second kind 
can be applied to the statistics counting outer blocks of $\pi$, in 
connection to a randomly picked $( \pi, u ) \in \NCord (n)$.  A block $V$ 
of a partition $\pi \in NC(n)$ is said to be {\em outer} when it 
is not possible to find $W \in \pi$ such that $V$ is nested 
inside $W$.  If $Z_n : \NCord (n) \to \bN$ is defined by putting
$Z_n ( \pi, u) :=
\ \vline \, \{ V \in \pi \mid \, V \mbox{ is outer} \} \ \vline \ ,$ 
then (as we will see in Section \ref{section:6}, by invoking the recursion of 
the second kind) one has the neat formula
$E[ Z_n ] = (2n+1)/3, \ \ \forall \, n \in \bN.$
\end{remark}

\vspace{10pt}

\subsection{The 
\texorpdfstring{\boldmath{$\NCord_2$}}{NCord2}-tree,
and some block-counting statistics on 
\texorpdfstring{\boldmath{$\NCord_2 (2n)$}}{NCord2(2n)}. }
\label{subsection:1-3}

$\ $

\noindent
In this paper we will also look at
some combinatorial statistics on monotonically ordered non-crossing
{\em pair-partitions}.  For motivation we mention that
the sets of unordered non-crossing pair-partitions,
$NC_2 (2n) := \bigl\{ \pi = \{ V_1, \ldots , V_n \} \in NC(2n) \ \vline 
\ \ |V_1| = \cdots = |V_n| = 2 \bigr\},$
play an important role in the combinatorics of free probability,
primarily due to their occurrence in the free 
Central Limit Theorem (see e.g. \cite[Lecture 8]{NiSp2006}).
This carries through to monotonic probability, where the sets
\begin{equation}   \label{eqn:1-3a}
\NCord_2 (2n) := \Bigl\{ ( \pi , u ) 
\begin{array}{lr}
\vline  & \pi \in NC_2 (2n)
\mbox{ and } u : \pi \to \{1, \ldots , n\},  \\
\vline  & \mbox{monotonic ordering} \end{array} \Bigr\}
\end{equation}
appear in the combinatorial description of the monotonic 
Central Limit Theorem \cite{Mu2001, HaSa2011}.

Towards studying combinatorial statistics on the $\NCord_2 (2n)$'s, one 
can pursue a methodology similar to the one described in 
Remark \ref{rem:19} in connection to the $\NCord (n)$'s.  This is because
the disjoint union $\NCord_2 := \sqcup_{n=1}^{\infty} \NCord_2 (2n)$ bears 
a natural structure of rooted tree analogous to (and in fact easier to spot than)
the one of the $\NCord$-tree of Remark \ref{rem:13}.  The unique pairing in 
$\NCord_2 (2)$ serves as root for the $\NCord_2$-tree and, for every $n \in \bN$, 
the elements of $\NCord_2 (2n)$ are at distance $n-1$ from the root.  Moreover, 
the $\NCord_2$-tree has a homogeneity property analogous to the one recorded in 
(\ref{eqn:14a}), which now goes like this:
\begin{equation}   \label{eqn:1-3b}
\left\{  \begin{array}{c}
\mbox{for every $n \in \bN$, every
$( \pi , u ) \in \NCord_2 (2n)$}   \\
\mbox{has precisely $2n+1$ children in the $\NCord_2$-tree.}
\end{array}  \right.
\end{equation}
We note that, as an immediate consequence of (\ref{eqn:1-3b}), one has 
\begin{equation}   \label{eqn:1-3bb}
| \, \NCord_2 (2n) \, | = (2n-1)!!, \ \ \forall \, n \in \bN
\end{equation}
(showing, in particular, how normalizations have to be done in calculations 
with uniform random variables on $\NCord_2 (2n)$).

Two interesting block-counting statistics that can be considered on 
$\NCord_2 (2n)$ are
\begin{equation}   \label{eqn:1-3c}
\OuT_n : \NCord_2(2n) \to \bR \ \mbox{ and }
\ \InT_n : \NCord_2(2n) \to \bR,
\end{equation}
counting the {\em outer pairs} and respectively the 
{\em interval pairs} of $\pi$, for a random $( \pi, u ) \in \NCord_2 (2n)$. 
Such pairs represent the top and bottom of the natural nesting structure among 
the pairs of $\pi$: a pair $V \in \pi$ is outer when it is not nested inside 
any other pair, and is an \emph{interval pair} when it is of the form 
$V = \{ m, m+1 \}$ for some $1 \leq m \leq 2n-1$ (which is equivalent to 
requiring that no $U \in \pi$ is nested inside $V$).  In Section \ref{section:7} 
of the paper we show how the recursion ``of the second kind'' from 
Remark \ref{rem:19} can be adapted to the setting of the 
$\NCord_2$-tree, and used to address the random variables $\OuT_n, \InT_n$ 
from (\ref{eqn:1-3c}).  In particular, one gets nice explicit formulas for 
their expectations, as follows.

\vspace{6pt}

\begin{theorem}   \label{thm:110}
Let $\InT_n, \OuT_n : \NCord_2 (2n) \to \bR$ be as defined above.

\vspace{6pt}

\noindent 
$1^o$ For every $n \in \bN$, one has that 
$E[ \, \InT_n \, ] = (2n+1)/3$.
\noindent 

\vspace{6pt}

\noindent
$2^o$ One has
\begin{equation}  \label{eqn:110a}
E[ \, \OuT_n \, ] = \frac{2^n \cdot n!}{ (2n-1)!! } - 1, \ \ n \in \bN,
\end{equation}
with the consequence that
$E[ \, \OuT_n \, ] \sim \sqrt{\pi n}$ for $n \to \infty$.
\end{theorem}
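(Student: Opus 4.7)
The plan is to exploit the homogeneity property (\ref{eqn:1-3b}) of the $\NCord_2$-tree. I would first make explicit the structure of the children in that tree: each $(\pi, u) \in \NCord_2(2n)$ has exactly $2n+1$ children in $\NCord_2(2n+2)$, obtained by inserting a new interval pair at one of the $2n+1$ ``gap positions'' $i \in \{0, 1, \ldots, 2n\}$ of the existing configuration and assigning the new pair the monotonic label $n+1$ (since $n+1$ is the largest label present, monotonicity forces the new pair to be an interval pair). Then for any statistic $Z_n : \NCord_2(2n) \to \bR$, summing $Z_{n+1}$ over the children of $(\pi, u)$ and then over all $(\pi, u) \in \NCord_2(2n)$ converts a per-vertex update rule into a recursion on the expectation $E[Z_n]$, after dividing by $|\NCord_2(2n+2)| = (2n+1)!!$ from (\ref{eqn:1-3bb}).

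For part $1^o$, I would observe that the inserted pair is always itself an interval pair (contributing $+1$ per child), while an existing interval pair $V = \{a, a+1\}$ becomes non-interval in the enlarged configuration exactly when the insertion takes place at gap $i = a$, which happens in precisely one of the $2n+1$ children. Summing over children thus gives
\[
\sum_{\text{children } c} \InT_{n+1}(c)
= (2n+1)\bigl( \InT_n(\pi,u) + 1 \bigr) - \InT_n(\pi,u)
= 2n \cdot \InT_n(\pi,u) + (2n+1),
\]
and averaging over $\NCord_2(2n)$ produces the recursion $E[\InT_{n+1}] = \tfrac{2n}{2n+1} E[\InT_n] + 1$. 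Together with the base case $E[\InT_1] = 1$, a short induction confirms $E[\InT_n] = (2n+1)/3$.

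For part $2^o$, the key observation is that insertion of the new pair can never destroy outer-ness of an existing pair, since an interval cannot nest anything. Hence only the outer-status of the new pair needs to be tracked: it is outer exactly when the chosen gap is not covered by any pair of $\pi$. Listing the outer pairs of $\pi$ in order as $V_1, \ldots, V_k$, non-crossingness forces each $V_j$ to correspond to a contiguous block of points with $V_{j+1}$ starting immediately after $V_j$ ends, so the uncovered (``level-$0$'') gaps are precisely gap $0$, the $k-1$ gaps between consecutive outer pairs, and gap $2n$, a total of $\OuT_n(\pi,u) + 1$. Therefore
\[
\sum_{\text{children } c} \OuT_{n+1}(c)
= (2n+1) \OuT_n(\pi,u) + \bigl( \OuT_n(\pi,u) + 1 \bigr)
= (2n+2) \OuT_n(\pi,u) + 1,
\]
leading to $E[\OuT_{n+1}] = \tfrac{2n+2}{2n+1} E[\OuT_n] + \tfrac{1}{2n+1}$. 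Setting $f_n := E[\OuT_n] + 1$ turns this into the telescoping recursion $f_{n+1} = \tfrac{2n+2}{2n+1} f_n$ with $f_1 = 2$, whose solution $f_n = 2^n n!/(2n-1)!!$ yields formula (\ref{eqn:110a}). To close, I would rewrite $2^n n!/(2n-1)!! = 4^n (n!)^2/(2n)! = 4^n / \binom{2n}{n}$ and apply the standard Stirling estimate $\binom{2n}{n} \sim 4^n/\sqrt{\pi n}$ to obtain $E[\OuT_n] \sim \sqrt{\pi n}$.

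The main obstacle is the combinatorial bookkeeping around insertions: pinning down that exactly one child breaks each existing interval pair (yielding the coefficient $2n/(2n+1)$ rather than $1$ in the $\InT$-recursion) and establishing the slightly more delicate count that level-$0$ gaps number exactly $\OuT_n + 1$. Once these two facts are in hand, both recursions close cleanly, induction gives the explicit formulas, and Stirling handles the final asymptotic.
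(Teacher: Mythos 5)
Your proposal is correct and follows essentially the same route as the paper: the same parent/child decomposition of the $\NCord_2$-tree, the same two combinatorial counts (each existing interval pair is destroyed in exactly one of the $2n+1$ children; the new pair is outer for exactly $\OuT_n+1$ choices of gap), and the same resulting recursions $E[\InT_{n+1}] = \tfrac{2n}{2n+1}E[\InT_n]+1$ and $E[\OuT_{n+1}]+1 = \tfrac{2n+2}{2n+1}(E[\OuT_n]+1)$. The only difference is presentational: you pass directly from the per-vertex update to the expectation recursion, whereas the paper routes the same facts through its ``pair-recursive of the second kind'' framework and the combinatorial Laplace transform (Theorem \ref{thm:77}, Corollary \ref{cor:78}, Lemma \ref{lemma:79}).
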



\subsection{Another combinatorial statistic 
on \texorpdfstring{\boldmath{$\NCord_2 (2n)$}}{NCord2(2n)}: area.}
\label{subsection:1-4}

$\ $

\noindent
An interesting statistic, specific to non-crossing pair-partitions, is the one 
which measures the {\em area} under a $\pi \in NC_2 (2n)$, when $\pi$ is viewed 
in its incarnation as a Dyck lattice path (see e.g.~\cite[Lecture 2]{NiSp2006}) 
or, in calculus language, when $\pi$ is identified to a piecewise linear function 
$f_{\pi} : [0, 2n] \to [0, \infty )$.  To be precise, 
the function $f_{\pi}$ is determined by the following requirements:
\begin{equation}   \label{eqn:1-5a}
\left\{   \begin{array}{l}
\mbox{$\to$ $f_{\pi}$ is continuous on $[0, 2n]$, 
      and has $f_{\pi} (0) = 0$;} \\
\mbox{$\to$ $f_{\pi}$ is linear on every interval 
      $[m-1, m]$, $1 \leq m \leq 2n$;} \\
\mbox{$\to$ For every $V = \{ i,j \} \in \pi$, with $i < j$, the slope 
of $f_{\pi} \mid [i-1,i]$}  \\
\mbox{ \hspace{0.3cm} is equal to $1$ and the slope of 
$f_{\pi} \mid [j-1,j]$ is equal to $-1$.}
\end{array}  \right.
\end{equation}
The recipe used to define $f_{\pi}$ in (\ref{eqn:1-5a}) has the 
immediate consequences that $f_{\pi} (t) \geq 0$ for every $t \in [0, 2n]$
and that $f_{\pi} (2n) = 0$.  It is natural to put
\[
\mathrm{area} ( \pi ) := 
\int_0^{2n} f_{\pi} (t) \, dt \in [0, \infty )
\]
(where the latter quantity is easily seen, in fact, to always be a positive 
integer).
For illustration, Figure 3 below shows how the graph of $f_{\pi}$
looks like, for an example of pair-partition $\pi \in NC_2 (14)$.

\[
\hspace{1.2cm}
\pi = \begin{tikzpicture}[scale=0.5,baseline={2*height("$=$")}]
        \draw (0,0) -- (0,1.5) -- (5,1.5) -- (5,0);
        \draw (1,0) -- (1,1) -- (2,1) -- (2,0);
        \draw (3,0) -- (3,1) -- (4,1) -- (4,0);
        \draw (6,0) -- (6,1) -- (7,1) -- (7,0);
        \draw (8,0) -- (8,2) -- (13,2) -- (13,0);
        \draw (9,0) -- (9,1.5) -- (12,1.5) -- (12,0);
        \draw (10,0) -- (10,1) -- (11,1) -- (11,0);
\end{tikzpicture} \in NC_2(14) 
\]

$\ $

\begin{figure}[h]
\begin{tikzpicture}[scale=0.5]
            \draw[dashed,->] (-1,0) -- (15,0);
            \draw[dashed,->] (0,-1) -- (0,5);
            \draw (0,0) -- (1,1);
            \draw (1,1) -- (2,2);
            \draw (2,2) -- (3,1);
            \draw (3,1) -- (4,2);
            \draw (4,2) -- (5,1);
            \draw (5,1) -- (6,0);
            \draw (6,0) -- (7,1);
            \draw (7,1) -- (8,0);
            \draw (8,0) -- (9,1);
            \draw (9,1) -- (10,2);
            \draw (10,2) -- (11,3);
            \draw (11,3) -- (12,2);
            \draw (12,2) -- (13,1);
            \draw (13,1) -- (14,0);
            \node at (14,-0.5) {$14$};
            \draw[dotted] (0,4) -- (14,4) -- (14,0);
\end{tikzpicture}
       \caption{ {\em
        $\pi = \bigl\{ \, \{1,6\}, \, \{2,3\}, \, \{4,5\}, \, \{7,8\},
         \, \{9,14\}, \, \{10, 13\}, \, \{11,12\} \, \bigr\}$ in $NC_2 (14)$,
         and graph of the associated function
         $f_{\pi} : [0,14] \to [0, \infty )$. } }
        \label{fig:fpi}
    \end{figure}

\vspace{10pt}

In the setting of the present paper, the above mentioned notion of area becomes
a random variable $\Area_n : \NCord_2(2n) \to \bR$, defined by
$\Area_n (\pi,u) := \mathrm{area} ( \pi )$ for $(\pi,u) \in \NCord_2 (2n)$.

\vspace{6pt}

\begin{theorem}   \label{thm:111}
The expectation of the random variable
$\Area_n : \NCord_2(2n) \to \bR$ is 
\begin{equation}   \label{eqn:111a}
E[\Area_n] = (2n+1) \, \sum_{k=1}^n \frac{1}{2k+1},
\ \ n \in \bN \text{.} 
\end{equation}
As a consequence of (\ref{eqn:111a}), it follows that 
$E [ \Area_n ] \sim n \, \ln n$ for $n \to \infty$.
\end{theorem}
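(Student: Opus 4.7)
My plan is to exploit the $\NCord_2$-tree and its homogeneity property (\ref{eqn:1-3b}) to derive a recursion for $E[\Area_n]$, analogous to the strategy announced for the block-counting statistics on $\NCord(n)$. The key preparation is the elementary identity
\begin{equation*}
\mathrm{area}(\pi) \, = \, \sum_{k=1}^{2n-1} f_\pi(k) \, = \, \sum_{V = \{i,j\} \in \pi,\, i<j} (j - i), \qquad \pi \in NC_2(2n),
\end{equation*}
which I would verify first: the middle expression comes from trapezoid-rule evaluation of the defining integral, together with $f_\pi(0) = f_\pi(2n) = 0$; the rightmost expression follows by swapping the order of summation, using the fact that $f_\pi(k)$ counts the pairs $\{i,j\} \in \pi$ with $i \leq k < j$.

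Next I would unfold the children of a fixed $(\pi, u) \in \NCord_2(2n-2)$: each of the $2n-1$ children $(\pi', u')$ is obtained by inserting a new interval pair at one of the positions $\{m, m+1\}$ with $m \in \{1, \ldots, 2n-1\}$ and shifting all labels $\geq m$ up by $2$; the inserted pair becomes the innermost block, receiving label $n$ under $u'$. An old pair $\{i,j\}$ is transformed to $\{i, j+2\}$ precisely when $i < m \leq j$, so its contribution to $\sum (j-i)$ grows by $2$; otherwise its contribution is unchanged. Consequently
\begin{equation*}
\mathrm{area}(\pi') \, = \, \mathrm{area}(\pi) + 1 + 2 f_\pi(m-1),
\end{equation*}
and summing over the $2n-1$ admissible values of $m$, and using $\sum_{m=1}^{2n-1} f_\pi(m-1) = \mathrm{area}(\pi)$ (which is just the identity from paragraph one, since $f_\pi(0) = f_\pi(2n-2) = 0$), gives
\begin{equation*}
\sum_{\text{children of }(\pi, u)} \mathrm{area}(\pi') \, = \, (2n+1)\,\mathrm{area}(\pi) \, + \, (2n-1).
\end{equation*}

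Summing this identity over all $(\pi, u) \in \NCord_2(2n-2)$ and using $|\NCord_2(2n-2)| = (2n-3)!!$ together with $(2n-1)(2n-3)!! = (2n-1)!!$, one obtains $S_n = (2n+1)\, S_{n-1} + (2n-1)!!$ for $S_n := \sum_{(\pi, u) \in \NCord_2(2n)} \mathrm{area}(\pi)$; dividing by $(2n-1)!!$ produces the clean recursion
\begin{equation*}
e_n \, = \, \tfrac{2n+1}{2n-1}\, e_{n-1} + 1, \qquad e_1 = 1,
\end{equation*}
for $e_n := E[\Area_n]$. A one-line induction now confirms the closed form $e_n = (2n+1) \sum_{k=1}^n \tfrac{1}{2k+1}$, the inductive step reducing to the identity $(2n+1) \cdot \tfrac{1}{2n+1} = 1$; the asymptotic $e_n \sim n \ln n$ then follows from $\sum_{k=1}^n \tfrac{1}{2k+1} \sim \tfrac{1}{2} \ln n$. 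I expect the only genuinely nontrivial step to be the area-change bookkeeping that produces the increment $1 + 2 f_\pi(m-1)$; once that is in hand everything downstream is mechanical.
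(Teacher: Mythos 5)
Your proposal is correct and follows essentially the same route as the paper: the identity $\area(\pi)=\sum_{V\in\pi}\max(V)-\min(V)$, the child-by-child bookkeeping in the $\NCord_2$-tree yielding $\sum_{\mathrm{children}}\area(\pi')=(2n+1)\area(\pi)+(2n-1)$, the recursion $S_n=(2n+1)S_{n-1}+(2n-1)!!$, and the resulting closed form. The only (harmless) differences are that you prove the area identity by a double count over the Dyck-path heights $f_\pi(k)$ rather than by induction, and in the insertion bookkeeping you fix the insertion position $m$ and count stretched pairs via $f_\pi(m-1)$, whereas the paper fixes a pair $V_j$ and counts the $\max(V_j)-\min(V_j)$ positions that stretch it.
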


\vspace{6pt}

An interesting aspect of the formula $E [ \Area_n ] \sim n \, \ln n$ from  
Theorem \ref{thm:111} is that it gives a clue on how the graphs of the functions 
$f_{\pi}$ ought to be rescaled for asymptotic analysis. Since the rescaling of
the horizontal axis must be, clearly, by a factor of $n$ (in order to shrink the 
domain of $f_{\pi}$ to the fixed interval $[0,2]$), it follows that the rescaling 
of the vertical axis ought to be made by a factor of $\ln n$.  This is in contrast to 
the analogous discussion that one would have for unordered pair-partitions in 
$NC_2 (n)$, where the calculation of expected areas gives values 
$\sim \sqrt{\pi} \cdot n^{3/2}$ (see e.g.~\cite[Theorem 2.1]{MeSpVe1996}),
and the suggested vertical rescaling is thus by $\sqrt{n}$ rather than $\ln n$.

\vspace{10pt}

\subsection{Organization of the paper.}
Besides the Introduction, the paper has 7 sections.

\vspace{6pt}

\noindent
-- Section \ref{section:2} gives the precise description of the $\NCord$-tree, 
and of the two kinds of recursions mentioned in Remark \ref{rem:19}.

\vspace{6pt}

\noindent
-- Sections \ref{section:3} and \ref{section:4} deal with the recursion of the 
first kind for sequences of random variables on the $\NCord (n)$'s.  The main 
point here is that, by relying on the $\NCord$-tree structure, one can convert 
the said kind of recursion into a recursion satisfied by a ``combinatorial version'' 
of the Laplace transforms of the random variables under consideration.  The precise 
description of how this goes is given in Theorem \ref{thm:32}, which is then used in 
Section \ref{section:4} in order to prove the Theorems \ref{thm:16} and \ref{thm:17} 
stated above.

\vspace{6pt}

\noindent
-- In Section \ref{section:5} we make a concise review of what are monotonic cumulants, 
and we show how calculations from Section \ref{subsection:4-1} can be viewed as giving
information on the so-called ``monotonic Poisson process''.

\vspace{6pt}

\noindent
-- In Section \ref{section:6} we return to the main line of the paper, and we pursue 
a development analogous to the one from Sections \ref{section:3} and \ref{section:4},
in connection to the $\NCord$-recursion property of the second kind.  The resulting 
recursion for combinatorial Laplace transforms is stated in Theorem \ref{thm:61},
and its application to the counting of outer blocks is in Corollary \ref{cor:63}.

\vspace{6pt}

\noindent
-- In Section \ref{section:7} we switch to looking at sequences of random 
variables on the $\NCord_2 (2n)$'s.  We examine how the formulas obtained in 
Section \ref{section:6} about the recursion of the second kind can be adjusted 
to the $\NCord_2$-setting, and we use that in order to prove Theorem \ref{thm:110}.

\vspace{6pt}

\noindent
-- In the final Section \ref{section:8} we discuss the ``area'' statistic for 
monotonically ordered non-crossing pair-partitions, and we give the proof 
of Theorem \ref{thm:111}.

\vspace{16pt}

\section{The \texorpdfstring{$\NCord$}{NCord}-tree and some recursions related to it}
\label{section:2}

\setcounter{equation}{0}

\subsection{Description of the \texorpdfstring{\boldmath{$\NCord$}}{NCord}-tree.}
\label{subsection:2-1}

$\ $

\noindent
In this subsection we show how $\NCord := \sqcup_{n=1}^{\infty} \NCord (n)$ can be
organized as set of vertices of a rooted tree, in the way anticipated in 
Remark \ref{rem:13}.  In order to indicate the edges of this tree we will explain, 
in Definition \ref{def:22}, how one assigns a parent $\parent ( \pi , u) \in \NCord (n-1)$ 
to a $( \pi, u ) \in \NCord (n)$ for $n \geq 2$. First, some preliminary notation. 

\vspace{6pt}

\begin{notation}   \label{def:21}
$1^o$ For every $( \pi , u ) \in \sqcup_{n=1}^{\infty} \NCord (n)$, we will use the 
notation ``$J( \pi, u )$'' for the block of $\pi$ to which $u$ assigns its maximal value,
$u \bigl( \, J( \pi, u ) \, \bigr) = | \pi |$.
As noticed in Remark \ref{rem:12}, $J$ is sure to be an {\em interval-block} of $\pi$. 

\vspace{6pt}

\noindent
$2^o$ A {\em restrict-and-relabel} notation: for $\pi \in NC(n)$ and 
$\emptyset \neq W \subseteq \{ 1, \ldots , n \}$, we let $\pi_W$ denote
the partition of $\{ 1, \ldots , |W| \}$ which is obtained by restricting 
$\pi$ to $W$ and then re-denoting the elements of $W$ as $1, \ldots, |W|$ 
in increasing order.  It is immediate that the non-crossing property 
is preserved by this operation, thus $\pi_W \in NC( \, |W| \, )$.
\end{notation}

\vspace{6pt}

\begin{definition}   \label{def:22}
Let $( \pi , u ) \in \NCord (n)$, where $n \geq 2$.  We will construct a 
$( \rho , v ) \in \NCord (n-1)$, which will be called {\em the parent} of 
$( \pi, u )$ and will be denoted as $\parent ( \pi , u )$.
Towards that end, we consider the block $J( \pi, u ) \in \pi$ from Notation \ref{def:21}.1,
and we let $m := \max \bigl( \, J( \pi, u ) \, \bigr) \in \{ 1, \ldots , n \}$.
The ``$\rho$'' part in the desired $( \rho , v )$ is defined as 
\begin{equation}  \label{eqn:22a}
\rho := \pi_{ \{ 1, \ldots , n \} \setminus \{ m \} } 
\in NC(n-1)
\end{equation}
(using the  ``restrict-and-relabel'' Notation \ref{def:21}.2). 
For the ``$v$'' part in $( \rho , v )$ we start from use the monotonic ordering 
$u : \pi \to \{ 1, \ldots , | \pi | \}$, and we go as follows.

\vspace{6pt}

\noindent
-- Case 1: $|J( \pi, u) | > 1$.  In this case we have $| \rho | = | \pi |$, and 
we can write $\rho = \{ \phi (V) \mid V \in \pi \}$, where $\phi$ is
the unique increasing bijection from 
$\{ 1, \ldots , n \} \setminus \{ m \}$ onto $\{ 1, \ldots , n-1 \}$.
We define $v : \rho \to \{ 1, \ldots , | \rho | \}$
by putting $v \bigl( \, \phi (V) \, \bigr) := u(V)$ for every $V \in \pi$.

\vspace{6pt}

\noindent
-- Case 2: $|J( \pi, u )| = 1$, and hence $J( \pi, u ) = \{ m \}$.  
Here the set-difference $\{ 1, \ldots , n \} \setminus \{ m \}$ 
completely eliminates the block $J( \pi , u )$, and thus 
$| \rho | = | \pi | - 1$.  But we still use a procedure similar to the one 
described in Case 1: for the same $\phi$ as considered there, we now write
$\rho$ as  $\bigl\{ \phi (V) \mid V \in \pi, \, V \neq \{ m \}, \,  \bigr\}$,
and we define the ordering $v : \rho \to \{ 1, \ldots , | \rho | \}$
by putting $v \bigl( \, \phi (V) \, \bigr) := u(V)$ for every 
$V \in \pi$, $V \neq \{ m \}$.
\end{definition}

\vspace{6pt}

\begin{example}  \label{example:23}
$1^o$ Say that $n=6$, 
$\pi = \bigl\{ \, \{ 1,2,6 \}, \, \{ 3 \}, \, \{ 4,5 \} \, \bigr\}$,
and we consider the monotonic ordering $u : \pi \to \{ 1,2,3 \}$ defined by
$u \bigl( \, \{ 1,2,6 \} \, \bigr) = 1,
\ u \bigl( \, \{ 3 \} \, \bigr) = 2,
\ u \bigl( \, \{ 4,5 \} \, \bigr) = 3,$
with $J( \pi, u ) = \{4,5 \}$ and $m := \max \bigl( \, J( \pi, u ) \, \bigr) = 5$. 
Then $\parent ( \pi, u ) = ( \rho , v ) \in \NCord (5)$, where
\[
\rho = \pi_{ \{ 1, \ldots , 6 \} \setminus \{ 5 \} }
= \bigl\{ \, \{ 1,2,5 \}, \, \{ 3 \}, \, \{ 4 \} \, \bigr\} \in NC(5),
\]
and the ordering $v$ of the blocks of $\rho$ is:
$v \bigl( \, \{ 1,2,5 \} \, \bigr) = 1,
\ v \bigl( \, \{ 3 \} \, \bigr) = 2,
\ v \bigl( \, \{ 4 \} \, \bigr) = 3.$

\vspace{6pt}

\noindent
$2^o$ Suppose we look at the same $\pi \in NC(6)$ as in $1^o$ above, 
but considered now with the monotonic ordering $u' : \pi \to \{ 1,2,3 \}$
defined by
$u' \bigl( \, \{ 1,2,6 \} \, \bigr) = 1,
\ u' \bigl( \, \{ 4,5 \} \, \bigr) = 2,
\ u' \bigl( \, \{ 3 \} \, \bigr) = 3,$
with $J( \pi, u' ) = \{ 3 \}$ and 
$m' := \max \bigl( \, J( \pi, u' ) \, \bigr) = 3$. 
Then $\parent ( \pi, u' ) = ( \rho' , v' ) \in \NCord (5)$, where
\[
\rho ' = \pi_{ \{ 1, \ldots , 6 \} \setminus \{ 3 \} }
= \bigl\{ \, \{ 1,2,5 \}, \, \{ 3, 4 \} \, \bigr\} \in NC(5),
\]
and the ordering $v'$ of the blocks of $\rho '$ is:
$v' \bigl( \, \{ 1,2,5 \} \, \bigr) = 1,
\ v' \bigl( \, \{ 3,4 \} \, \bigr) = 2.$
\end{example}

\vspace{6pt}

\begin{notation}   \label{def:24}
For $n \geq 2$ in $\bN$ and $( \rho, v ) \in \NCord (n-1)$ we will denote
\begin{equation}   \label{eqn:24a}
C( \rho, v ) := \{ ( \pi, u ) \mid \parent ( \pi, u ) = ( \rho, v ) \} 
\subseteq \NCord (n);
\end{equation}
that is, $C( \rho , v )$ is the {\em set of children} of $( \rho , v )$ 
in the $\NCord$-tree.
\end{notation}

\vspace{6pt}

\begin{remark}   \label{rem:25}
{\em (Homogeneity property of the $\NCord$-tree.)}
Let $n \geq 2$ be in $\bN$ and let $( \rho , v ) \in \NCord (n-1)$.
Then $| C( \rho , v ) | = n+1$.  More precisely, one can write
\begin{equation}  \label{eqn:25a}
C( \rho, v ) = \bigl\{ ( \pi_1, u_1 ), \ldots , ( \pi_n , u_n), 
( \pi_{n+1} , u_{n+1} ) \bigr\},
\end{equation}
where:

\noindent
-- For every $1 \leq m \leq n$, the partition $\pi_m \in NC(n)$ is obtained by 
inserting a singleton block in the picture of $\rho$, with the added singleton
placed on position $m$, and where this singleton block $\{ m \}$ gets 
to have maximal label in the monotonic ordering $u_m$ of the blocks of $\pi_m$
-- that is, one has $J( \pi_m, u_m ) = \{ m \}$.

\vspace{6pt}

\noindent
-- The partition $\pi_{n+1}$ is obtained by considering the interval-block 
$J ( \rho, v )  = \{ p, \ldots , q \}$ with maximal $v$-label, 
and by ``elongating'' $\{ p, \ldots , q \}$ to become the block with maximal label 
in the monotonic ordering $u_{n+1}$ of $\pi_{n+1}$ -- that is, one has
\begin{equation}   \label{eqn:26b}
J( \pi_{n+1} , u_{n+1} ) = \{ p, \ldots , q, q+1 \} \in \pi_{n+1}.
\end{equation}
\end{remark}

\vspace{10pt}

\subsection{The \texorpdfstring{\boldmath{$\NCord$}}{NCord}-recursion of the first kind.}
\label{subsection:2-2}

\begin{definition}   \label{def:26}
Let $( Z_n : \NCord (n) \to \bR )_{n=1}^{\infty}$ be real random variables, let 
$k \in \bN$, and let $\ur = (\pp_1, \ldots , \pp_k) \in \bR^k$.  We will say 
that the sequence of $Z_n$'s is {\em recursive of the first kind}
with input $\ur$ to mean that the following condition is fulfilled:
\begin{equation}   \label{eqn:26a}
\begin{array}{lll}
\vline & \mbox{For every $n \geq 2$ and $(\pi, u ) \in \NCord (n)$,} & \vline \\
\vline & \mbox{with $\parent ( \pi , u) = ( \rho, v ) \in \NCord (n-1)$, one has:} & \vline  \\
\vline &                &  \vline   \\
\vline &  \begin{array}{cc}
         \mbox{$\ $} \ Z_n ( \pi, u ) = Z_{n-1} ( \rho,v ) + \pp_1, 
                               & \mbox{ if $| J( \pi, u ) | = 1$,}         \\
         \mbox{$\ $} \ \cdots \cdots \cdots \cdots \cdots \cdots
                               & \cdots \cdots \cdots                      \\
         \mbox{$\ $} \ Z_n ( \pi, u) = Z_{n-1} ( \rho,v ) + \pp_k, 
                               & \mbox{ if $| J( \pi, u ) | = k$,}         \\
         \mbox{$\ $}           &                                           \\
         \mbox{and } Z_n ( \pi, u) = Z_{n-1} ( \rho,v ), 
                               & \mbox{ if $| J( \pi, u ) | > k$.} 
         \end{array}  & \vline
\end{array}
\end{equation}
\end{definition}

\vspace{6pt}

\begin{remark}  \label{rem:27}
$1^o$ The way to think about the recursive condition stated in (\ref{eqn:26a})
is that we generally want to have 
$Z_n ( \pi, u) = Z_{n-1} \bigl( \, \parent ( \pi , u ) \, \bigr)$,
but we allow exceptions when the special block $J( \pi,u) \in \pi$ 
has small size -- namely, if $| J( \pi , u ) | = j \leq k$ then
$Z_n ( \pi, u) = Z_{n-1} \bigl( \, \parent ( \pi , u ) \, \bigr) + \pp_j$.

\vspace{6pt}

\noindent
$2^o$ In the setting of Definition \ref{def:26}, it is immediate 
that if the input tuple $\ur$ is given and if $Z_1$ is given, then all the 
$Z_n$'s are uniquely determined. However,
the way we would like to use this definition goes, in some sense, in reverse:
given a sequence of $Z_n$'s which we deem to be of interest, we want to see if 
it wouldn't be possible to fit a $k$ and a 
$\ur = (\pp_1, \ldots , \pp_k)$ such that the $Z_n$'s are recursive of the 
first kind with input $\ur$.
This turns out to work for a number of natural examples of block-counting random 
variables, as shown below.

Note that in the above mentioned process of fitting an input vector 
$\ur = (\pp_1, \ldots , \pp_k)$, there is some freedom in the choice of $k$: 
we can always replace $k$ with an $\ell > k$, and replace the input tuple 
$\ur \in \bR^k$ with  $(\pp_1, \ldots , \pp_k, 0, \ldots 0) \in \bR^{\ell}$. 
\end{remark}

\vspace{6pt}

\begin{example}   \label{example:28}
The block-counting random variables $Y_n$ introduced in Notation \ref{rem:15} 
satisfy the condition (\ref{eqn:26a}), with $k =1$ and $\ur = (1) \in \bR^1$. 
This amounts to the fact that, for every $n \geq 2$ and $( \pi, u ) \in \NCord (n)$ 
with parent $\parent ( \pi , u ) =: ( \rho , v ) \in \NCord (n-1)$, one has:
\begin{equation}    \label{eqn:28a}
| \pi | = \left\{  \begin{array}{ll}
| \rho | + 1, & \mbox{if $|J( \pi, u )| = 1$,}   \\
| \rho |,     & \mbox{if $|J( \pi, u )| > 1$.}   
\end{array}   \right. 
\end{equation}
For the verification of (\ref{eqn:28a}), we check the two possible cases: if 
$|J( \pi, u )| = 1$ then $\pi$ was obtained by appending a singleton block to $\rho$, 
and thus $| \pi | = | \rho | + 1$; while if $|J( \pi, u )| \geq 2$ then $\pi$ was 
obtained by elongating the block $J( \rho , v )$ of $\rho$, and thus $| \pi | = | \rho | $. 
\end{example}

\vspace{6pt}

\begin{example}  \label{example:29}
Let us fix an $\ell \in \bN$ and consider the random variables 
$\bigl( \, Y_n^{( \ell )} \, \bigr)_{n=1}^{\infty}$ which were introduced in 
Equation (\ref{eqn:17a}) of Theorem \ref{thm:17}, counting blocks of cardinality $\ell$.  
We claim that the $Y_n^{ ( \ell ) }$'s satisfy the condition (\ref{eqn:26a}), with 
$k = \ell + 1$ and $\ur = (0, \ldots , 0, 1, -1) \in \bR^{\ell + 1}$.  This amounts
to the fact that for $n \geq 2$ and $( \pi, u ) \in \NCord (n)$ 
with parent $\parent ( \pi , u ) =: ( \rho , v ) \in \NCord (n-1)$, one has:
\begin{equation}    \label{eqn:29a}
Y_n^{( \ell )} ( \pi, u ) = \left\{  \begin{array}{ll}
Y_n^{ ( \ell ) } ( \rho , v ) + 1, & \mbox{if $|J( \pi, u )| = \ell$,}       \\
Y_n^{ ( \ell ) } ( \rho , v ) - 1, & \mbox{if $|J( \pi, u )| = \ell + 1$,}   \\
Y_n^{ ( \ell ) } ( \rho , v ),     & \mbox{otherwise.}   
\end{array}   \right. 
\end{equation}
For the verification of (\ref{eqn:29a}), we check the three possible cases listed there.  
We will do the verification under the assumption that $\ell \geq 2$ (and leave it as 
exercise to the reader to supply the slight adjustments needed for $\ell = 1$). 

\vspace{6pt}

\noindent
{\em Case 1:} $|J( \pi, u )| = \ell$. Since $\ell \geq 2$, it must be that $J( \pi, u )$ 
was obtained by elongating the block $J( \rho , v )$ of $\rho$, which had 
$| J( \rho, v ) | = \ell -1$.  So $\pi$ indeed has one more block of cardinality $\ell$ 
than $\rho$ did -- this is precisely the block $J ( \pi, u )$. 

\vspace{6pt}

\noindent
{\em Case 2:} $|J( \pi, u )| = \ell + 1$.  This is similar to Case 1, but 
where we now have that $| J( \rho , v ) | = \ell \neq| J ( \pi , u ) | \neq \ell$; 
hence in this case $\pi$ has one less block of cardinality $\ell$ than $\rho$ did.

\vspace{6pt}

\noindent
{\em Case 3:} $|J( \pi, u )| \not\in \{ \ell,  \ell + 1 \}$.  In this case, there are no blocks 
of cardinality $\ell$ that would either appear or disappear in the process of passing 
from $( \rho , v )$ to $( \pi, u )$, hence $\pi$ has the same number of blocks of cardinality 
$\ell$ as $\rho$ (as claimed in (\ref{eqn:29a})).
\end{example}

\vspace{10pt}

\begin{example}   \label{example:210}
Consider the random variables 
$\bigl( \, Y_n^{( \geq 3 )} \, \bigr)_{n=1}^{\infty}$ which were introduced in 
Equation (\ref{eqn:18a}) of Remark \ref{rem:18}, counting all the blocks which do not have 
cardinality $1$ or $2$.  The $Y_n^{ ( \geq 3 ) }$'s satisfy the condition (\ref{eqn:26a}), 
with $k = 3$ and $\ur = (0,0,1) \in \bR^3$.  The verification of this claim is similar to 
the one shown in the preceding example, and is left to the reader.
\end{example}

\vspace{10pt}

\subsection{The \texorpdfstring{\boldmath{$\NCord$}}{NCord}-recursion of the second kind.}
\label{subsection:2-3}

\begin{definition}  \label{def:211}
Let $\alpha,\beta, q \in \bZ$ be given.  We will say that a sequence of 
random variables $(Z_n : \NCord (n) \to \bZ)_{n=1}^{\infty}$ is 
{\em recursive of the second kind} with input $(\alpha,\beta; q)$ when it has
the following property: for every $n \geq 2$ and $(\rho,v) \in \NCord (n-1)$, 
the set $C( \rho, v )$ of children of $( \rho , v )$ 
has a subset $C_o ( \rho, v )$ such that:
\begin{equation}   \label{eqn:211a}
\left\{   \begin{array}{ll}
\mathrm{(1)} & \mbox{If $( \pi, u ) \in C_o ( \rho,v )$, then
      $Z_n(\pi,u) = Z_{n-1}(\rho,v) + \alpha$;}    \\ 
\mathrm{(2)} & \mbox{if $( \pi, u ) \in C( \rho, v ) \setminus C_o ( \rho, v)$, then
      $Z_n(\pi,u) = Z_{n-1}(\rho,v) + \beta$;}    \\ 
\mbox{$\ $} \mathrm{ and} &                                        \\
\mathrm{(3)} & \mbox{$| C_o( \rho, v) | = Z_{n-1}(\rho,v) + q$.}
\end{array}   \right.
\end{equation}
\end{definition}

\vspace{10pt}

\begin{example}   \label{example:212}
For every $n \in \bN$, let $\OuT_n : \NCord(n) \to \bN$ be defined by
\begin{equation}  \label{eqn:212a}
\OuT_n ( \pi, u ) := \ \vline \, \{ V \in \pi \mid \, V \mbox{ is outer} \} \ \vline \ ,
\ \ ( \pi, u ) \in \NCord (n),
\end{equation}
where recall that a block $V \in \pi$ is said to be \emph{outer} 
when there is no $W \in \pi$ with $\min (W) < \min (V)$ and $\max (W) > \max (V)$.
We will verify that the sequence of random variables $( \OuT_n )_{n=1}^{\infty}$ 
is recursive of the second kind, with parameters $\alpha=1$, $\beta = 0$ and $q=1$.
Towards that end, we fix an $n \geq 2$ and a $( \rho , v ) \in \NCord (n-1)$,
and we consider the set $C( \rho , v )$ of children of $( \rho, v )$ in the 
$\NCord$-tree.  We want to find a a subset $C_o ( \rho, v ) \subseteq C( \rho, v )$ 
such that the requirements from (\ref{eqn:211a}) are fulfilled, i.e.~such that:
\begin{equation}   \label{eqn:212b}
\left\{   \begin{array}{ll}
\mathrm{(1)} & \mbox{If $( \pi, u ) \in C_o ( \rho,v )$, then
      $\OuT_n ( \pi , u ) = \OuT_{n-1} (\rho,v) + 1$;}    \\ 
\mathrm{(2)} & \mbox{if $( \pi, u ) \in C( \rho, v ) \setminus C_o ( \rho, v)$, then
      $\OuT_n(\pi,u) = \OuT_{n-1}(\rho,v)$;}    \\ 
\mbox{$\ $} \mathrm{ and} &                                        \\
\mathrm{(3)} & \mbox{$| C_o( \rho, v) | = \OuT_{n-1} (\rho,v) + 1$.}
\end{array}   \right.
\end{equation}
In order to find $C_o ( \rho, v )$, we resort to the explicit description of 
$C( \rho, v )$, as given in Equation (\ref{eqn:25a}) of Remark \ref{rem:25}: 
\[
C( \rho, v ) = \{ (\pi_1, u_1), \ldots , ( \pi_n, u_n), ( \pi_{n+1}, u_{n+1} ) \},
\]
where for $1 \leq m \leq n$ we have $J( \pi_m, u_m) = \{m\}$, while for $m = n+1$ we 
get the block $J( \pi_{n+1}, u_{n+1} )$ of $\pi_{n+1}$ by elongating (that is, by
appending one more element to) the interval-block $J( \rho, v )$ of $\rho$.

Now, by using the non-crossing property, it is easy to see that the outer blocks of $\rho$
can be organized in a list $W_1, \ldots , W_s$ such that
\[
\min (W_1) = 1, \ \min (W_2) = \max (W_1) + 1, \ldots ,
\min (W_s) = \max (W_{s-1} ) + 1, \ \max (W_s) = n-1.
\]
(These blocks form, in some sense, an ``outer frame'' for $\rho$.) 
Referring to the explicit description of $C( \rho , v )$ reviewed above, 
we observe that for $1 \leq m \leq n+1$ we have:
\[
\OuT_n ( \pi_m , u_m ) = \left\{ \begin{array}{ll}
\OuT_{n-1} ( \rho, v ) + 1, & \mbox{ if } m \in \{ \min (W_1), \ldots , \min (W_s), n \}, \\
\OuT_{n-1} ( \rho, v),      & \mbox{otherwise.}
\end{array}  \right.
\]
This is because, when passing from $\rho$ to a $\pi_m$, the outer blocks of
$\rho$ always become outer blocks of $\pi_m$, and there is one possible additional outer
block of $\pi_m$ showing up -- this occurs when $1 \leq m \leq n$ and the singleton block added 
to $\rho$ in order to obtain $\pi_m$ is placed ``in between'' two consecutive outer blocks 
$W_{i-1}$ and $W_i$ of $\rho$ (also allowing here the possibility that the new singleton 
block is placed to the left of $W_1$ or to the right of $W_s$).

So then, the subset $C_o ( \rho, v) \subseteq C ( \rho , v )$ defined as 
\[
C_o ( \rho , v ) 
:= \Bigl\{ \pi_m \mid m \in \{ \min (W_1), \ldots , \min (W_s), n \} \, \Bigr\}
\]
satisfies the requirements (1) and (2) in (\ref{eqn:212b}).  Finally, we observe that 
this choice of $C_o ( \rho, v )$ also fulfills the requirement (3) in (\ref{eqn:212b}),
since $| C_o ( \rho, v ) | = s+1 = \OuT_{n-1} ( \rho, v ) +1$.
\end{example}

\vspace{10pt}

\section{Laplace transforms for the
\texorpdfstring{$\NCord$}{NCord}-recursion of the first kind}
\label{section:3}

\setcounter{equation}{0}

\noindent
Let $n \in \bN$ and let $Z_n : \NCord (n) \to \bR$ be a random variable, considered 
with respect to uniform distribution on $\NCord (n)$. 
The Laplace transform of $Z_n$ is the complex function 
\[
\lambda \mapsto E[ e^{\lambda Z_n} ] = \frac{1}{ | \NCord (n) | }
\sum_{ ( \pi,u) \in \NCord (n) } e^{\lambda Z_n ( \pi, u ) },
\ \ \lambda \in \bC.
\]
For the purpose of this paper it is convenient that in the latter equation 
we restrict $\lambda$ to $\bR$, put $t = e^{\lambda} \in (0, \infty )$, 
and set aside the normalization constant $| \NCord (n) |$.  This leads 
to the following version of the Laplace transform, which is defined on $(0, \infty )$
and, in our examples of interest where $Z_n$ takes 
values in $\bN \cup \{ 0 \}$, is a polynomial function.

\vspace{6pt}

\begin{definition-and-remark}  \label{def:31}
Let $n \in \bN$ and let $Z_n : \NCord (n) \to \bR$ be a random variable, considered 
with respect to uniform distribution on $\NCord (n)$.  We will use the term 
{\em combinatorial Laplace transform of $Z_n$} for the function 
$L_n : (0, \infty ) \to \bR$ defined by
\begin{equation}   \label{eqn:31a}
L_n (t) = \sum_{ ( \pi, u ) \in \NCord (n) } t^{Z_n ( \pi, u )},
\ \ t > 0.
\end{equation}

We note that $L_n$ contains significant information about the random variable $Z_n$; 
in particular, direct calculation shows that the expectation and variance of $Z_n$ are 
retrieved as 
\begin{equation}   \label{eqn:31b}
E[ Z_n ] = \frac{L_n ' (1)}{L_n (1)} \ \mbox{ and }
\mathrm{Var} [Z_n] = E[ Z_n^2 ] - \bigl( \, E[Z_n] \, \bigr)^2 
= \frac{L_n '' (1) + L_n ' (1)}{L_n (1)} - \Bigl( \, \frac{L_n ' (1)}{L_n (1)} \, \Bigr)^2,
\end{equation}
where the $L_n (1)$ in the denominators keeps track of normalizations,
$L_n (1) = | \NCord (n) | = (n+1)!/2$.
\end{definition-and-remark}

\vspace{6pt}

The next theorem says that if a sequence of random variables 
$( Z_n : \NCord (n) \to \bR )_{n=1}^{\infty}$ is recursive of the first kind (in the sense 
of Definition \ref{def:26}), then the combinatorial
Laplace transforms $L( Z_n )$ obey a tractable recursion formula.  

\vspace{6pt}

\begin{theorem}   \label{thm:32}
Let $( Z_n : \NCord (n) \to \bR )_{n=1}^{\infty}$ be a sequence of real random 
variables which is recursive of the first kind with input
$\ur = (\pp_1, \ldots , \pp_k) \in \bR^k$, where $k \geq 2$. 
For every $n \in \bN$, let $L_n : ( 0, \infty ) \to \bR$ be defined as in 
Equation (\ref{eqn:31a}).  Then one has: 
\begin{equation}   \label{eqn:32a}
L_n (t) = (1 + nt^{\pp_1}) L_{n-1} (t) + \sum_{j=2}^k (n-j+1) \, (t^{\pp_j} - 1)
\, t^{\pp_1 + \cdots + \pp_{j-1}} \, L_{n-j} (t),
\end{equation}
holding for $n > k$ in $\bN$ and $t \in ( 0, \infty )$.
\end{theorem}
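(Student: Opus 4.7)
The plan is to split the sum $L_n(t) = \sum_{(\pi,u) \in \NCord (n)} t^{Z_n(\pi,u)}$ according to the parent of $(\pi,u)$ in the $\NCord$-tree. By Remark \ref{rem:25}, every $(\rho,v) \in \NCord (n-1)$ has exactly $n+1$ children: $n$ singleton-adding children, each with $|J(\pi,u)| = 1$, and one elongation child with $|J(\pi,u)| = |J(\rho,v)| + 1$. The first-kind recursion (\ref{eqn:26a}) then expresses $Z_n$ of each child in terms of $Z_{n-1}(\rho,v)$: the $n$ singleton children each contribute $t^{Z_{n-1}(\rho,v)+r_1}$; the elongation child contributes $t^{Z_{n-1}(\rho,v) + r_{|J(\rho,v)|+1}}$ when $|J(\rho,v)| \leq k-1$, and it contributes $t^{Z_{n-1}(\rho,v)}$ when $|J(\rho,v)| \geq k$.

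Grouping parents by the value $m := |J(\rho,v)|$, I would introduce the auxiliary sums
\[
A_{n-1,m}(t) := \sum_{\substack{(\rho,v) \in \NCord (n-1) \\ |J(\rho,v)| = m}} t^{Z_{n-1}(\rho,v)},
\]
and use $\sum_{m \geq 1} A_{n-1,m}(t) = L_{n-1}(t)$ to absorb the tail $\sum_{m \geq k} A_{n-1,m}(t)$. A short rearrangement then gives
\[
L_n(t) = (1 + n t^{r_1}) L_{n-1}(t) + \sum_{m=1}^{k-1} (t^{r_{m+1}} - 1)\, A_{n-1,m}(t).
\]

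The heart of the proof is to evaluate $A_{n-1,m}(t)$ for $1 \leq m \leq k-1$, for which I would analyse the iterated parent map $\parent^{m}$ on the set $\{(\rho,v) \in \NCord (n-1) : |J(\rho,v)| = m\}$. Reading Remark \ref{rem:25} in reverse, any such $(\rho,v)$ is produced from its $m$-th ancestor $(\rho_0, v_0) \in \NCord (n-1-m)$ by first inserting a singleton block in one of the $n-m$ available positions (which makes $|J| = 1$) and then performing $m-1$ deterministic elongation steps. Thus the fibre of $\parent^m$ over each $(\rho_0, v_0)$ has size exactly $n-m$, and along the chain the values of $|J|$ run through $1, 2, \ldots, m$, all of which lie in $\{1, \ldots, k-1\}$; telescoping (\ref{eqn:26a}) along this chain gives $Z_{n-1}(\rho,v) = Z_{n-1-m}(\rho_0, v_0) + r_1 + r_2 + \cdots + r_m$. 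Plugging this into the definition of $A_{n-1,m}(t)$ yields $A_{n-1,m}(t) = (n-m)\, t^{r_1 + \cdots + r_m}\, L_{n-1-m}(t)$; substituting back and reindexing by $j = m+1$ produces exactly (\ref{eqn:32a}). The hypothesis $n > k$ is used to ensure that $L_{n-1-m}$ is defined for every $m \leq k-1$. The main obstacle, conceptually, is this chain analysis of $A_{n-1,m}(t)$: once one is convinced that every element with $|J| = m$ arises uniquely via the singleton-then-elongate procedure, the remainder of the argument is routine bookkeeping.
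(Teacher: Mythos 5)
Your proposal is correct and follows essentially the same route as the paper: group the sum by parents in the $\NCord$-tree, separate the $n$ singleton children from the single elongation child, and then evaluate the sum over parents with $|J(\rho,v)|=m$ via the iterated parent map, telescoping the recursion along the ancestor chain. The only difference is organizational --- the paper splits your analysis of $A_{n-1,m}(t)$ into two lemmas (one for the $m-1$ elongation steps, one for the final singleton-insertion count of $n-m$), while you treat the full $m$-step chain at once --- but the bijection and the bookkeeping are identical.
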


\vspace{10pt}

Towards the proof of Theorem \ref{thm:32}, we first prove two lemmas.

\vspace{6pt}

\begin{lemma}  \label{lemma:33}
Consider the framework and notation of Theorem \ref{thm:32}. Let 
$\ell \in \{ 2, \ldots , k \}$, let $m$ be an integer such that $m \geq \ell$,
and let $\parent^{( \ell -1 )} : \NCord (m) \to \NCord (m - \ell + 1)$ be the 
map obtained by iterating $\ell - 1$ times the parent map $\parent$ of the 
$\NCord$-tree.  

\vspace{6pt}

\noindent
$1^o$ Let $( \rho , v ) \in \NCord (m)$ be such that 
$|J( \rho , v)| = \ell$, and let $( \sigma , w ) 
:= \parent^{ ( \ell -1 ) } ( \rho, v ) \in  \NCord (m - \ell + 1)$.
One has that
\begin{equation}  \label{eqn:33a}
Z_m ( \rho , v ) = Z_{m- \ell + 1} ( \sigma , w) + ( \pp_2 + \cdots + \pp_{\ell} ).
\end{equation}

\vspace{6pt}

\noindent
$2^o$ The map $\parent^{ ( \ell -1 ) }$ establishes a bijection between 
$\{ ( \rho , v ) \in \NCord (m) \mid \ |J( \rho , v)| = \ell \}$ and
$\{ ( \sigma , w ) \in \NCord (m - \ell + 1) \mid \ |J( \sigma , w)| = 1 \}$.
\end{lemma}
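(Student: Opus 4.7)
Both parts hinge on the following single-step observation about how the parent map $\parent$ interacts with the distinguished block $J$: whenever $|J( \rho , v )| \geq 2$, Definition \ref{def:22} places us in Case 1, so $\parent ( \rho , v )$ deletes $\max ( J ( \rho , v ) )$ (and relabels), after which the ``shortened'' block inherits the maximal label and becomes $J ( \parent ( \rho , v ) )$, of size $| J( \rho , v ) | - 1$. Dually, the ``elongation child'' of a $( \sigma , w )$, described in Remark \ref{rem:25} and (\ref{eqn:26b}), extends $J ( \sigma , w)$ by appending one point on its right and has $|J|$ larger by $1$; applying $\parent$ to that child returns $( \sigma , w )$.

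For part $1^o$ my plan is to iterate the observation. Set $( \rho_0 , v_0 ) := ( \rho , v )$ and $( \rho_i , v_i ) := \parent ( \rho_{i-1} , v_{i-1} )$ for $1 \leq i \leq \ell - 1$. Induction gives $|J ( \rho_i , v_i )| = \ell - i$, so in particular every $| J ( \rho_{i-1} , v_{i-1} ) | = \ell - i + 1$ lies in $\{ 2, \ldots , \ell \} \subseteq \{ 2, \ldots , k \}$, and the recursion (\ref{eqn:26a}) applies to yield $Z_{m-i+1} ( \rho_{i-1} , v_{i-1} ) = Z_{m-i} ( \rho_i , v_i ) + \pp_{ \ell - i + 1 }$ for each $i$. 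Telescoping these equations and using $( \rho_{ \ell - 1 } , v_{ \ell - 1 } ) = ( \sigma , w )$ delivers (\ref{eqn:33a}).

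For part $2^o$ I would exhibit an explicit inverse $\Psi$ to $\parent^{( \ell - 1 )}$. Given $( \sigma , w )$ with $|J ( \sigma , w )| = 1$, iteratively taking the elongation child produces a chain $( \sigma , w ) = ( \tau_0 , z_0 ), \ldots , ( \tau_{ \ell - 1 } , z_{ \ell - 1 } )$ with $|J ( \tau_i , z_i )| = i + 1$, so setting $\Psi ( \sigma , w ) := ( \tau_{ \ell - 1 } , z_{ \ell - 1 } )$ gives a well-defined element of $\NCord ( m )$ with $|J| = \ell$; the dual observation then makes $\parent^{ ( \ell - 1 ) } \circ \Psi$ the identity on $\{ ( \sigma , w ) : |J( \sigma , w )| = 1 \}$. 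Conversely, the chain $( \rho_i , v_i )$ from part $1^o$ satisfies $|J ( \rho_{i-1} , v_{i-1} ) | \geq 2$, putting each parent step in Case 1; the same observation then identifies $( \rho_{i-1} , v_{i-1} )$ as the elongation child of $( \rho_i , v_i )$, so $\Psi \circ \parent^{ ( \ell - 1 ) }$ is also the identity, completing the bijection.

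The only real work is the single-step compatibility check: verifying that ``remove the max of $J$'' (Case 1 of $\parent$) and ``append a point to the right of $J$'' (the elongation child construction) are mutually inverse at the level of both the underlying non-crossing partition and its monotonic ordering. This reduces to unwinding the relabelling bijection $\phi$ in Definition \ref{def:22} and matching it against the prescription in Remark \ref{rem:25}; once that single-step compatibility is in hand, the telescoping in part $1^o$ and the bijectivity in part $2^o$ fall out with no further calculation.
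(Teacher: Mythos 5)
Your proposal is correct and follows essentially the same route as the paper: part $1^o$ by iterating the parent map, tracking $|J(\rho_i,v_i)|=\ell-i$, and telescoping the one-step recursions, and part $2^o$ by exhibiting the elongation construction as an explicit inverse (the paper elongates the singleton to length $\ell$ in one step, which is the same map as your iterated single-step elongation). The single-step compatibility check you isolate at the end is precisely the verification the paper leaves to the reader.
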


\begin{proof} $1^o$ Let us write explicitly how the $\ell - 1$ iterations
of the parent map apply to $( \rho , v )$, in order to reach the partition 
$( \sigma , w )$.  We thus denote:
\[
\begin{array}{ll}
\vline  &  ( \rho_0, v_0 ) := ( \rho , v ),
            \ \ ( \rho_1 , v_1) := \parent ( \rho_0, v_0 ) \in \NCord (m-1),   \\
\vline  &                                                                     \\
\vline  & ( \rho_2 , v_2) := \parent ( \rho_1, v_1 ) \in \NCord (m-2), 
          \ldots , ( \rho_{\ell -1} , v_{\ell -1} ) 
          := \parent ( \rho_{\ell -2}, v_{\ell -2} ),                          \\
\vline  &                                                                     \\
\vline  & \mbox{$\ $ to the effect that } 
          ( \sigma, w) = ( \rho_{\ell - 1}, v_{\ell -1} )\in \NCord (m- \ell + 1).
\end{array}
\]
From the definition of the parent map we infer that for every 
$1 \leq i \leq \ell - 1$ we have
$J( \rho_i, v_i ) \subseteq J( \rho_{i-1}, v_{i-1} ),
\mbox{ with }
|J( \rho_i, v_i )| \, = \, |J( \rho_{i-1}, v_{i-1} )| - 1;$
this implies that
\begin{equation}   \label{eqn:33b}
|J( \rho_i, v_i ) | = \ell -i, \ \ \forall \, 0 \leq i \leq \ell - 1.
\end{equation}
The latter information about the cardinalities $| J( \rho_i, v_i ) |$
can be plugged into the recursive condition (\ref{eqn:26a}) satisfied by the 
$Z_n$'s, in order to get that: 
\begin{equation}   \label{eqn:33c}
\begin{array}{llcc}
\vline & Z_m (\rho_0 , v_0 ) & = & Z_{m-1} ( \rho_1 , v_1 ) + \pp_{\ell},       \\ 
\vline & Z_{m-1} ( \rho_1 , v_1 ) & = & Z_{m-2} ( \rho_2 , v_2 ) + \pp_{\ell -1}, \\ 
\vline & \cdots \cdots \mbox{$\ $} \cdots \cdots \mbox{$\ $} \cdots \cdots 
             & \cdots & \cdots \cdots \mbox{$\ $} \cdots \cdots  \mbox{$\ $} \cdots \cdots \\
\vline & Z_{m-\ell+2} ( \rho_{\ell -2} , v_{\ell -2} )
         & = & Z_{m-\ell + 1} ( \rho_{\ell -1} , v_{\ell -1} ) + \pp_2.
\end{array}
\end{equation}
Adding together all the equations listed in (\ref{eqn:33c}) then leads to 
\[
Z_m (\rho_0 , v_0 ) = 
Z_{m-\ell + 1} ( \rho_{\ell -1} , v_{\ell -1} ) + ( \pp_2 + \cdots + \pp_{\ell} ),
\]
which is precisely the required formula (\ref{eqn:33a}).

\vspace{6pt}

\noindent
$2^o$ Let $( \rho , v ) \in \NCord (m)$ be such that $|J( \rho , v)| = \ell$, and
let $( \sigma , w ) := \parent^{ ( \ell - 1) } ( \rho , v ) \in \NCord (m - \ell + 1)$.
Upon reviewing the argument that proved $1^o$ above, we see that
the Equation (\ref{eqn:33b}) found there gives in particular
$|J( \sigma , w)| \, = \, |J( \rho_{\ell -1}, v_{\ell -1} ) | 
= \ell - ( \ell -1 ) = 1.$
Thus the map $\parent^{ ( \ell - 1 ) }$ does indeed send the set 
$\{ ( \rho , v ) \in \NCord (m) \mid \ |J( \rho , v)| = \ell \}$ into
$\{ ( \sigma , w ) \in \NCord (m - \ell + 1) \mid \ |J( \sigma , w)| = 1 \}$.

On the other hand, let us consider the map
\[
\begin{array}{rcl}
Q^{ ( \ell - 1 ) } 
: \{ ( \sigma , w ) \in \NCord (m - \ell + 1) \mid \ |J( \sigma , w)| = 1 \}
& \longrightarrow & \NCord (m),   \\
( \sigma , w ) & \mapsto & ( \rho , v ),
\end{array}
\]
where $(\rho , v)$ is obtained by elongating the singleton block $J( \sigma , w )$
to an interval block of cardinality $\ell$, and by keeping the ordering of blocks 
(thus the elongated block of size $\ell$ is the one which becomes $J( \rho , v )$).
The reader should have no difficulty 
in checking that the map $Q^{ ( \ell -1 ) }$ so defined is an inverse for the 
map from $\{ ( \rho , v ) \in \NCord (m) \mid \ |J( \rho , v)| = \ell \}$ to
$\{ ( \sigma , w ) \in \NCord (m - \ell + 1) \mid \ |J( \sigma , w)| = 1 \}$ which
is induced by $\parent^{ ( \ell - 1 ) }$.
\end{proof}

\vspace{10pt}

\begin{lemma}  \label{lemma:34}
Consider the framework and notation of Theorem \ref{thm:32}.  For every integer 
$m \geq 2$ and $t \in ( 0, \infty)$, one has that
\begin{equation}   \label{eqn:34a}
\sum_{ \substack{ ( \rho , v ) \in \NCord (m) \\ \mathrm{with} \ |J(\rho, v)| = 1 } }
t^{ Z_m ( \rho,v) } = m \, t^{\pp_1} \, L_{m-1} (t).
\end{equation}
\end{lemma}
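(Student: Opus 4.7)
The plan is to partition the set $\{(\rho,v) \in \NCord(m) \mid |J(\rho,v)| = 1\}$ according to the parent map $\parent : \NCord(m) \to \NCord(m-1)$, and then apply the recursion of the first kind on each fiber.

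First, for any $(\rho,v) \in \NCord(m)$ with $|J(\rho,v)| = 1$, the recursion (\ref{eqn:26a}) gives $Z_m(\rho,v) = Z_{m-1}(\parent(\rho,v)) + \pp_1$, since this is the case $|J(\rho,v)| = 1$ in the definition. So
\[
\sum_{\substack{(\rho,v) \in \NCord(m) \\ |J(\rho,v)| = 1}} t^{Z_m(\rho,v)}
\; = \; t^{\pp_1} \sum_{\substack{(\rho,v) \in \NCord(m) \\ |J(\rho,v)| = 1}} t^{Z_{m-1}(\parent(\rho,v))}.
\]

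Next I would reorganize the sum by grouping $(\rho,v)$'s according to their parent $(\sigma,w) := \parent(\rho,v) \in \NCord(m-1)$. For each fixed $(\sigma,w) \in \NCord(m-1)$, I need to count how many children $(\rho,v) \in C(\sigma,w)$ satisfy $|J(\rho,v)| = 1$. Here I invoke Remark \ref{rem:25}: the children of $(\sigma,w)$ are explicitly $(\pi_1,u_1),\ldots,(\pi_m,u_m),(\pi_{m+1},u_{m+1})$, where for $1 \leq i \leq m$ the block $J(\pi_i,u_i)$ is the singleton $\{i\}$, while $J(\pi_{m+1},u_{m+1})$ is the elongation of $J(\sigma,w)$ and so has cardinality $|J(\sigma,w)| + 1 \geq 2$. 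Thus precisely $m$ of the $m+1$ children of $(\sigma,w)$ have $|J| = 1$.

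Combining these observations, every $(\sigma,w) \in \NCord(m-1)$ contributes exactly $m$ equal summands of $t^{Z_{m-1}(\sigma,w)}$, so
\[
\sum_{\substack{(\rho,v) \in \NCord(m) \\ |J(\rho,v)| = 1}} t^{Z_{m-1}(\parent(\rho,v))}
\; = \; m \sum_{(\sigma,w) \in \NCord(m-1)} t^{Z_{m-1}(\sigma,w)}
\; = \; m \, L_{m-1}(t),
\]
and multiplying by $t^{\pp_1}$ yields (\ref{eqn:34a}). The argument is essentially a two-step bookkeeping exercise, so I do not expect a genuine obstacle; the only point to be careful about is confirming the fiber count ``$m$ children with $|J|=1$'' from the explicit description of $C(\sigma,w)$ in Remark \ref{rem:25}, which is where the homogeneity (\ref{eqn:14a}) of the $\NCord$-tree enters.
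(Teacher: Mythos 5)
Your proposal is correct and is essentially the paper's own argument: group the terms by the parent $(\sigma,w)=\parent(\rho,v)$, use the explicit description of $C(\sigma,w)$ from Remark \ref{rem:25} to see that exactly $m$ of the $m+1$ children have a singleton $J$-block, and apply the case $|J|=1$ of the recursion (\ref{eqn:26a}) to each of them. The only (immaterial) difference is that you factor out $t^{\pp_1}$ before regrouping, whereas the paper regroups first.
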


\begin{proof} On the left-hand side of (\ref{eqn:34a})
we group the terms according to what is the parent of $( \rho, v )$
in the $\NCord$-tree, and this converts the quantity indicated there into:
\begin{equation}   \label{eqn:34b}
\sum_{( \sigma , w) \in \NCord (m-1)} \Bigl[ 
\, \sum_{ \substack{ ( \rho , v ) \in \NCord (m) \\
          \mathrm{with} \ \parent ( \rho, v ) = ( \sigma , w) \\
          \mathrm{and \ with} \ |J( \rho , v)| = 1} } 
t^{ Z_m ( \rho , v )} \, \Bigr] .
\end{equation}

Let us now focus on an ordered partition 
$( \sigma , w ) \in \NCord (m-1)$, for which we examine the inner sum 
corresponding to $( \sigma , w )$ in (\ref{eqn:34b}).  In view of the 
explicit description of the children of $( \sigma, w )$ in the $\NCord$-tree
observed in Remark \ref{rem:25}, we can list
\[
\bigl\{ ( \rho , v ) \in \NCord (m) \mid \parent ( \rho, v ) = ( \sigma , w)
\ \mathrm{and} \ |J( \rho , v)| = 1 \bigr\} = 
\{ (\rho_1, v_1), \ldots , (\rho_m, v_m) \},
\]
where, for every $1 \leq i \leq m$, 
$( \rho_i , v_i)$ is the unique ordered partition in $\NCord (m)$ such that 
$\parent ( \rho_i , v_i ) = ( \sigma , w )$ and $J( \rho_i, v_i ) = \{ i \}$. 
Since the recursive condition (\ref{eqn:26a}) satisfied by the 
$Z_n$'s assures us that 
$Z_m ( \rho_i , v_i ) = Z_{m-1} ( \sigma , w ) + \pp_1$,
$1 \leq i \leq m$ we infer that
\begin{equation}   \label{eqn:34c}
\sum_{ \substack{ ( \rho , v ) \in \NCord (m) \\
          \mathrm{with} \ \parent ( \rho, v ) = ( \sigma , w) \\
          \mathrm{and \ with} \ |J( \rho , v)| = 1} } 
t^{ Z_m ( \rho , v )}  
= \sum_{i=1}^m t^{ Z_m ( \rho_i, v_i ) } 
= m \, t^{ Z_{m-1} ( \sigma , w ) + \pp_1 }.
\end{equation}

Equation (\ref{eqn:34c}) holds for every $( \sigma, w ) \in \NCord (m-1)$.
This allows us to plug the right-hand side of (\ref{eqn:34c}) as inner sum in 
(\ref{eqn:34b}), in order to conclude that 
\[
\left(  \begin{array}{c} 
\mbox{quantity}  \\
\mbox{from (\ref{eqn:34b})} 
\end{array} \right)
= \sum_{( \sigma , w) \in \NCord (m-1)} \Bigl[ 
\, m \, t^{\pp_1} \, t^{ Z_{m-1} ( \sigma , w )} \, \Bigr]
= m \, t^{\pp_1} \, L_{m-1} (t),
\]
as claimed in (\ref{eqn:34a}).
\end{proof}

\vspace{10pt}

\begin{ad-hoc-item}  \label{proof:35}
{\bf Proof of Theorem \ref{thm:32}.} 
We fix for the proof an $n > k$ in $\bN$ and a $t \in (0, \infty)$ for which 
we will verify that (\ref{eqn:32a}) holds. 
In the sum over $( \pi, u ) \in \NCord (n)$ which defines $L_n (t)$, we 
organize the terms according to what is the parent of $( \pi, u )$ in the 
$\NCord$-tree:
\begin{equation}  \label{eqn:35a}
L_n (t) = \sum_{( \rho , v) \in \NCord (n-1)} \Bigl[ 
\, \sum_{ \substack{ ( \pi , u ) \in \NCord (n) \\
\mathrm{with} \ \parent ( \pi, u ) = ( \rho , v)} } t^{Z_n ( \pi , u )} \, \Bigr] .
\end{equation}

Let us fix for the moment an ordered partition $( \rho , v ) \in \NCord (n-1)$, for which 
we examine the inner sum corresponding to $( \rho , v )$ in (\ref{eqn:35a}).
This sum is indexed by the set 
$C( \rho , v ) = \{ ( \pi_1, u_1 ), \ldots , ( \pi_{n+1}, u_{n+1} ) \}$
of children of $( \rho , v )$ in the $\NCord$-tree, 
with the $( \pi_m , u_m )$'s explicitly described in the paragraph following to 
Equation (\ref{eqn:25a}) of Remark \ref{rem:25}.  By invoking the recursive 
property (\ref{eqn:26a}) in connection to the explicit description of $C( \rho, v )$ 
reviewed above, we find that for $1 \leq m \leq n$ we have
$Z_n ( \pi_m, u_m ) = Z_{n-1} ( \rho , v ) + \pp_1$
(since $( \pi_m, u_m)$ is obtained by inserting a singleton block in $(\rho, v )$).
The remaining value $Z_n ( \pi_{n+1}, u_{n+1} )$ relates to $Z_{n-1} ( \rho , v )$ in 
a way which depends on what is the cardinality of the block 
$J( \pi_{n+1}, u_{n+1} ) \in \pi_{n+1}$.
The latter cardinality is just $1 + |J( \rho , v )|$
(since $( \pi_{n+1}, u_{n+1})$ is obtained by elongating the block $J(\rho, v ) \in \rho$
in order to make it become $J( \pi_{n+1}, u_{n+1} )$). The recursive property (\ref{eqn:26a}) 
thus gives the following possibilities $(*)$ and $(**)$:
\[
\left\{
\begin{array}{ll}
(*)   & \mbox{if } 1 + |J( \rho , v )| > k, \mbox{ then }
Z_n ( \pi_{n+1}, u_{n+1} ) = Z_{n-1} ( \rho, v );   \\
      &                                             \\
(**)  & \mbox{if } 1 + |J( \rho , v )| = j \in \{ 2, \ldots , k \},
\mbox{ then } Z_n ( \pi_{n+1}, u_{n+1} ) = Z_{n-1} ( \rho, v ) + \pp_j.
\end{array}  \right.
\]
Returning to the evaluation of the inner sum which corresponds to 
$( \rho , v )$ in (\ref{eqn:35a}), we see that this sum is
\begin{align*}
\sum_{m=1}^{n+1} t^{Z_n ( \pi_m , u_m )}  
& = n \, t^{ Z_{n-1} ( \rho , v ) + \pp_1 } + t^{ Z_n ( \pi_{n+1}, u_{n+1} ) }  \\
& = n \, t^{ Z_{n-1} ( \rho , v ) + \pp_1 } + t^{ Z_{n-1} ( \rho , v ) } 
\end{align*}
where the correction term appears in connection to the possibility $(**)$, when it is 
equal to $t^{ Z_{n-1} ( \rho , v ) + \pp_j } - t^{ Z_{n-1} ( \rho , v ) }$ (for 
$j = 1 + |J( \rho , v )|$).

\vspace{6pt}

We now unfix the $( \rho , v )$ that was considered throughout the 
preceding paragraph, and plug the result of that discussion into the sum over 
$(\rho , v ) \in \NCord (n-1)$ appearing in (\ref{eqn:35a}).  Then Equation (\ref{eqn:35a})
takes the following form:
\begin{equation}  \label{eqn:35b}
L_n (t) = \sum_{( \rho , v) \in \NCord (n-1)} \Bigl[ 
\, n \, t^{ Z_{n-1} ( \rho , v ) + \pp_1 } + t^{ Z_{n-1} ( \rho , v ) } \, \Bigr]
\end{equation}
\[
+ \sum_{j=2}^k \, \Bigl[ \, \sum_{ \substack{ ( \rho , v ) \in \NCord (n-1) \\
                                \mathrm{with} \ |J( \rho, v )| = j-1} } 
t^{ Z_{n-1} ( \rho , v ) + \pp_j } - t^{ Z_{n-1} ( \rho , v ) } \, \Bigr].
\]
From here we continue as follows.

\vspace{6pt}  

\noindent
{\em Claim 1.} The first sum listed in (\ref{eqn:35b}) is equal to
$(n \, t^{\pp_1} + 1 ) \, L_{n-1} (t).$

\vspace{6pt}

\noindent
{\em Verification of Claim 1.}  The sum in question is 
$\sum_{ ( \rho , v ) \in \NCord (n-1) } 
(n \, t^{\pp_1} + 1 ) t^{ Z_{n-1} ( \rho , v )},$
and, in view of how $L_{n-1}$ is defined, this is indeed 
$(n \, t^{\pp_1} + 1 ) \, L_{n-1} (t)$.

\vspace{6pt}

\noindent
{\em Claim 2.} The second sum listed in (\ref{eqn:35b}) is equal to
\[
\sum_{j=2}^k \, (n-j+1) \, ( t^{\pp_j} - 1) 
\, t^{\pp_1 + \cdots + \pp_{j-1}} \, L_{n-j} (t).
\]

\noindent
{\em Verification of Claim 2.} 
The sum under consideration can be written as: 
\[
\sum_{j=2}^k ( t^{\pp_j} - 1) 
\, \Bigl[ \, \sum_{ \substack{ ( \rho , v ) \in \NCord (n-1) \\
                                \mathrm{with} \ |J( \rho, v )| = j-1} } 
t^{ Z_{n-1} ( \rho , v ) } \, \Bigr],
\]
hence it suffices to verify that we have equalities
\begin{equation}   \label{eqn:35c}
\sum_{ \substack{ ( \rho , v ) \in \NCord (n-1) \\
                                \mathrm{with} \ |J( \rho, v )| = j-1} } 
t^{ Z_{n-1} ( \rho , v ) }
= (n-j+1) \, t^{\pp_1 + \cdots + \pp_{j-1}} \, \, L_{n-j} (t),
\end{equation}
holding for every $j \in \{ 2, \ldots , k \}$.  The case $j=2$ of (\ref{eqn:35c})
says that
\[
\sum_{ \substack{ ( \rho , v ) \in \NCord (n-1) \\
                                \mathrm{with} \ |J( \rho, v )| = 1} } 
t^{ Z_{n-1} ( \rho , v ) }
= (n-1) \, t^{\pp_1} \, \, L_{n-2} (t),
\]
which is just Lemma \ref{lemma:34}, used for $m = n-1$.  For a $j \in \{ 3, \ldots , k \}$, 
the verification of (\ref{eqn:35c}) goes  as follows: 
\begin{align*}
\sum_{ \substack{ ( \rho , v ) \in \NCord (n-1) \\
                      \mathrm{with} \ |J( \rho, v )| = j-1} } 
       t^{ Z_{n-1} ( \sigma , w ) } 
& = \sum_{ \substack{ ( \sigma , w ) \in \NCord (n-j+1) \\
                                \mathrm{with} \ |J( \sigma, w )| = 1} } 
t^{ Z_{n-j+1} ( \sigma , w ) + ( \pp_2 + \cdots + \pp_{j-1} ) }               \\
& \mbox{ $\ $ $\ $ (by Lemma \ref{lemma:33}, used with 
         $m = n-1$ and $\ell = j-1$) }                                     \\
& = t^{ \pp_2 + \cdots + \pp_{j-1} } \cdot 
             \sum_{ \substack{ ( \sigma , w ) \in \NCord (n-j+1) \\
                                \mathrm{with} \ |J( \sigma, w )| = 1} } 
        t^{ Z_{n-j+1} ( \sigma , w ) }                                     \\  
& = t^{\pp_2 + \cdots + \pp_{j-1}} \cdot (n-j+1) \, t^{\pp_1} \, L_{n-j} (t) 
    \ \mbox{ (by Lemma \ref{lemma:34}) }                                  \\
& = (n-j+1) \, t^{\pp_1 + \cdots + \pp_{j-1}} \, \, L_{n-j} (t), \ \mbox{ as required.}
\end{align*}

\vspace{6pt}

Finally, substituting the expressions from Claim 1 and Claim 2  
on the right-hand side of the Equation (\ref{eqn:35b}) yields the 
required formula (\ref{eqn:32a}) from the statement of the theorem.
\hfill $\square$
\end{ad-hoc-item}

\vspace{10pt}

\section{Statistics satisfying an
\texorpdfstring{$\NCord$}{NCord}-recursion of the first kind}
\label{section:4}

\setcounter{equation}{0}

\noindent
In this section we show how the recursion for combinatorial Laplace transforms
found in Theorem \ref{thm:32} applies to the statistics considered in Theorems \ref{thm:16}
and \ref{thm:17} of the Introduction, and we provide the proofs of these theorems.

\vspace{10pt}

\subsection{Block-counting random variables -- proof of Theorem \ref{thm:16}.} 
\label{subsection:4-1}

$\ $

\noindent
In this subsection we consider the random variables
$( Y_n : \NCord (n) \to \bR )_{n=1}^{\infty}$ introduced in Notation \ref{rem:15},
with $Y_n ( \pi, u ) := | \pi |$ for $( \pi , u ) \in \NCord (n)$, and we give
the proof of Theorem \ref{thm:16}. 

\vspace{6pt}

\begin{notation-and-remark}   \label{def:41}
For every $n \in \bN$, the combinatorial Laplace transform of $Y_n$ (defined as 
in Equation (\ref{eqn:31a})) will be denoted as $B_n$.  That is, we put
\begin{equation}  \label{eqn:41a}
B_n (t) := \sum_{( \pi , u) \in \NCord (n)} t^{| \pi |}, \ \ t \in (0, \infty ).
\end{equation}
Clearly, every $B_n$ is a polynomial function 
without constant term.  For example, direct inspection shows that
$B_1 (t) = t$, $B_2 (t) = 2t^2 + t$, $B_3 (t) = 6t^3 + 5t^2 + t$. 
\end{notation-and-remark} 

\vspace{6pt}

\begin{lemma}   \label{lemma:42}
The polynomials $B_n$ introduced above satisfy the recursion
\begin{equation}   \label{eqn:42a}
B_n (t) = (1+nt) \, B_{n-1} (t), \ \ n \geq 2,
\end{equation}
and are consequently given by the formula
\begin{equation}   \label{eqn:42b}
B_n (t) = t (1 +2t) \cdots (1 + nt), 
\ \mbox{ for $n \geq 2$ in $\bN$ and $t \in (0, \infty )$.}
\end{equation}
\end{lemma}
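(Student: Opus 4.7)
My plan is to prove the recursion (4.2a) directly from the $\NCord$-tree structure, and then iterate it to obtain the closed form (4.2b). The lemma could alternatively be derived as a special case of Theorem \ref{thm:32}, but the direct argument is cleaner and relies only on the description of children in Remark \ref{rem:25}.

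First I would reorganize the defining sum for $B_n(t)$ according to the parent map, writing
\begin{equation*}
B_n(t) \;=\; \sum_{(\rho, v) \in \NCord(n-1)} \ \sum_{(\pi, u) \in C(\rho, v)} t^{|\pi|}.
\end{equation*}
Fixing $(\rho,v) \in \NCord(n-1)$, I would evaluate the inner sum via Remark \ref{rem:25}: among the $n+1$ children $(\pi_1,u_1),\ldots,(\pi_{n+1},u_{n+1})$ of $(\rho,v)$, for $1 \leq m \leq n$ the partition $\pi_m$ is obtained by inserting a new singleton block, so $|\pi_m| = |\rho|+1$, while $\pi_{n+1}$ is obtained by elongating $J(\rho,v)$, so $|\pi_{n+1}| = |\rho|$. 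Therefore the inner sum equals $n\,t^{|\rho|+1} + t^{|\rho|} = (1+nt)\,t^{|\rho|}$, and summing over $(\rho,v) \in \NCord(n-1)$ yields $B_n(t) = (1+nt)\,B_{n-1}(t)$, establishing (4.2a).

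As an alternative route, one can invoke Theorem \ref{thm:32}: Example \ref{example:28} shows that $(Y_n)$ is recursive of the first kind with input $(1) \in \bR^1$, and by Remark \ref{rem:27}.2 one may reformulate this with input $(1,0) \in \bR^2$, so Theorem \ref{thm:32} applies with $k=2$. The $j=2$ term in (3.2a) carries a factor $t^{\pp_2}-1 = 0$ and thus vanishes, leaving (4.2a) for $n > 2$; the case $n=2$ is then immediate by direct inspection from $B_1(t) = t$ and $B_2(t) = 2t^2 + t$.

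Once (4.2a) is in hand, the closed form (4.2b) follows by a one-line induction starting from $B_1(t) = t$. There is no genuine obstacle in this argument: all the combinatorial content has already been absorbed into the structural statement of Remark \ref{rem:25}, which tells us that each non-root vertex of the $\NCord$-tree has exactly one ``elongating'' child (preserving the block count) and $n$ ``singleton-inserting'' children (each increasing the block count by one).
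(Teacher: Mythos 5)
Your proposal is correct. Your primary argument is a genuinely different (and slightly cleaner) route from the paper's: the paper proves Lemma \ref{lemma:42} by citing Theorem \ref{thm:32} with the padded input vector $(1,0)\in\bR^2$, observing that the $j=2$ correction term vanishes because $t^{\pp_2}-1=0$, and then checking the case $n=2$ separately by inspection (since Theorem \ref{thm:32} is only stated for $n>k$). Your direct computation --- grouping the defining sum of $B_n$ by parent and using Remark \ref{rem:25} to evaluate each inner sum as $n\,t^{|\rho|+1}+t^{|\rho|}$ --- is essentially an inlining of the proof of Theorem \ref{thm:32} in the special case where all correction terms are absent; what it buys you is that the recursion (\ref{eqn:42a}) comes out uniformly for all $n\geq 2$, with no separate base-case check. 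What the paper's route buys instead is economy of exposition, since Theorem \ref{thm:32} has already been proved and is reused verbatim for the harder statistics $Y_n^{(\ell)}$ in Lemma \ref{lemma:45}. Your second paragraph reproduces the paper's proof exactly, so you have in effect given both arguments; either one stands on its own.
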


\begin{proof}  We saw in Example \ref{example:28} that the $Y_n$ are recursive of 
the first kind with input vector $(1) \in \bR^1$.  We use here that, equivalently, 
they are recursive of the first kind with input vector 
$\underline{r} = (r_1, r_2) = (1,0) \in \bR^2$, and we invoke
Equation (\ref{eqn:32a}) of Theorem \ref{thm:32} in connection to the latter input 
vector, to find that for every $n \geq 3$ we have:
\[
B_n (t) = (1 + n t^{r_1}) \, B_{n-1} (t) + (n-2+1) 
\, (t^{r_2} - 1) \, t^{r_1} \, B_{n-2} (t) = (1+ nt) B_{n-1} (t).
\]
This gives the formula (\ref{eqn:42a}), where for $n=2$ we check by direct inspection 
that $B_2 (t) = (1+2t) B_1 (t)$.  Equation (\ref{eqn:42b}) follows from (\ref{eqn:42a}) 
via an immediate induction argument.
\end{proof}

\vspace{6pt}

By using Lemma \ref{lemma:42}, it is easy to find the explicit 
expectation and variance formulas for the $Y_n$'s which 
were announced in Theorem \ref{thm:16} of the Introduction.  

\vspace{10pt}

\begin{ad-hoc-item}  \label{proof:43}
{\bf Proof of Theorem \ref{thm:16}.}
$1^o$ Differentiation in the product formula (\ref{eqn:42b}) leads to
\begin{equation}  \label{eqn:43x}
\frac{B_n ' (t)}{B_n (t)} = \frac{1}{t} + \frac{2}{1+2t} 
+ \frac{3}{1 + 3t}  + \cdots + \frac{n}{1 + nt }, \ \ t \in ( 0, \infty ),
\end{equation}
so upon invoking Equation (\ref{eqn:31b}) of Remark \ref{def:31} we get
\begin{equation}   \label{eqn:43y}
E[ \, Y_n \, ] =  \frac{B_n ' (1)}{B_n (1)} = 1 + \frac{2}{3} 
+ \frac{3}{4}  + \cdots + \frac{n}{n+1}
= n - \sum_{k=3}^{n+1} \frac{1}{k},
\end{equation}
leading to the formula for $E[Y_n]$ stated in Theorem \ref{thm:16}.1. 
The asymptotics stated in Equation (\ref{eqn:16b}) there follows
when we take into account that $H_n \approx \ln n + \gamma$. 

\vspace{6pt}

\noindent
$2^o$ Let us denote $R_n := B_n ' / B_n$.  Then direct calculation gives
\[
R_n + R_n' = \frac{B_n'}{B_n}
+ \frac{B_n'' B_n - ( B_n ' )^2}{B_n^2}
= \frac{B_n '' + B_n'}{B_n} - \Bigl[ \, \frac{B_n '}{B_n} \, \Bigr]^2,
\]
and comparing this against the second Equation (\ref{eqn:31b}) in
Remark \ref{def:31} we find that
\begin{equation}   \label{eqn:43z}
\mathrm{Var} (Y_n) = R_n (1) + R_n' (1).
\end{equation}

Now, Equation (\ref{eqn:43x}) gives an explicit formula 
for $R_n (t)$, and upon differentiating both sides of that equation we also 
find the explicit formula for $R_n ' (t)$: 
\[
R_n' (t) = - \frac{1}{t^2} - \frac{4}{(1+2t)^2} - \cdots - \frac{n^2}{(1+nt)^2},
\ \ t > 0.
\]
Substituting these two explicit formulas into (\ref{eqn:43z}) gives
\begin{align*}
\mathrm{Var} (Y_n) 
& = \Bigl( 1 + \frac{2}{3} + \cdots + \frac{n}{n+1} \Bigr)
- \Bigl( 1 + \frac{2^2}{3^2} + \cdots + \frac{n^2}{(n+1)^2} \Bigr) \\
& = \sum_{k=2}^n \Bigl( \, \frac{k}{k+1} - \frac{k^2}{(k+1)^2} \, \Bigr) 
  = \sum_{k=2}^n \frac{k}{(k+1)^2},
\end{align*}
which verifies the first equality stated in Theorem \ref{thm:16}.2.  For the second 
equality stated there, we write every $k/(k+1)^2$ as the difference between 
$1/(k+1)$ and $1/(k+1)^2$, to get
\begin{align*}
\mathrm{Var} (Y_n)
& = \Bigl( \frac{1}{3} + \cdots + \frac{1}{n+1} \Bigr)
- \Bigl( \frac{1}{3^2} + \cdots + \frac{1}{(n+1)^2} \Bigr) \\
& = \Bigl( H_{n+1} - 1 - \frac{1}{2} \Bigr) 
    - \Bigl( H_{n+1}^{(2)} - 1 - \frac{1}{4} \Bigr)  
  = H_{n+1} - H_{n+1}^{(2)} - \frac{1}{4},
\end{align*}
as required.
Finally, in order to obtain the asymptotics stated in (\ref{eqn:16d}),
we write 
\begin{align*}
\mathrm{Var} (Y_n) - \ln n
& = \bigl( H_n + \frac{1}{n+1} \bigr) - H_{n+1}^{(2)} - \frac{1}{4} - \ln n \\
& = ( H_n - \ln n ) - H_{n+1}^{(2)} - \frac{1}{4} + \frac{1}{n+1}.
\end{align*}
The latter quantity converges to $\gamma - \frac{\pi^2}{6} - \frac{1}{4} + 0$,
which gives the required $\approx$ formula.
\hfill $\square$
\end{ad-hoc-item}

\vspace{10pt}

\subsection{Block-counting tallied by size -- proof of Theorem \ref{thm:17}.}
\label{subsection:4-2}

$\ $

\noindent
In this subsection we consider the random variables $Y_n^{ ( \ell ) }$ defined in 
Equation (\ref{eqn:17a}) of Theorem \ref{thm:17} -- that is, $Y_n^{( \ell )} ( \pi, u )$ 
counts the blocks of cardinality $\ell$ of $\pi$, for $( \pi , u ) \in \NCord (n)$.
The main point of the subsection is to prove Proposition \ref{prop:46}, which repeats, 
with some added details, the statement of Theorem \ref{thm:17}.

\vspace{6pt}

\begin{notation-and-remark}   \label{def:44}
For every $\ell, n \in \bN$, the combinatorial Laplace transform of $Y_n^{( \ell )}$ 
(defined as in Equation (\ref{eqn:31a})) will be denoted as $B_n^{( \ell )}$. 
Since $Y_n^{ ( \ell ) }$ takes values in $\bN \cup \{ 0 \}$, every $B_n^{( \ell )}$ 
is a polynomial function.  For example, direct inspection shows that for $\ell = 1$ 
we have $B_1^{ (1) } (t) = t$, $B_2^{ (1) } (t) = 2t^2 + 1$, 
$B_3^{ (1) } (t) = 6t^3 + 5t^2 + 1$, while for $\ell = 2$ we have
$B_1^{ (2) } (t) = 1$, $B_2^{ (2) } (t) = t + 2$, $B_3^{ (2) } (t) = 5t + 7$.
\end{notation-and-remark} 

\vspace{6pt}

\begin{lemma}   \label{lemma:45}
Notation as above.

\vspace{6pt}

\noindent
$1^o$ The sequence of polynomials $\bigl( \, B_n^{(1)} \, \bigr)_{n=1}^{\infty}$ 
satisfies the recursion 
\begin{equation}   \label{eqn:45a}
B_n^{(1)} (t) = (nt+1) \, B_{n-1}^{(1)} (t) 
  + (n-1) \, (1-t) \, B_{n-2}^{(1)} (t), \ \ n \geq 3.
\end{equation}

\vspace{6pt}

\noindent
$2^o$ For every $\ell \geq 2$, the sequence of polynomials 
$\bigl( \, B_n^{( \ell )} \, \bigr)_{n = 1}^{\infty}$ satisfies the recursion 
\begin{equation}   \label{eqn:45b}
B_n^{( \ell )} (t) = (n+1) \, B_{n-1}^{( \ell )} (t)
+ (t-1) \, \Bigl( \, (n- \ell +1 ) \, B_{n - \ell}^{( \ell )} (t)
- (n- \ell) \, B_{n- \ell - 1}^{( \ell )} (t) \, \Bigr),
\end{equation}
holding for $n \geq \ell + 2$.  
\end{lemma}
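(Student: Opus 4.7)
The plan is to derive both recursions as direct instances of the general formula (3.2a) from Theorem \ref{thm:32}, once the appropriate input vectors are put in place. The input vectors are already identified in Example \ref{example:29}: the sequence $( Y_n^{( \ell )} )_{n=1}^{\infty}$ is recursive of the first kind with $k = \ell + 1$ and input
$\underline{r} = (0, \ldots , 0, 1, -1) \in \bR^{\ell + 1}$,
where the nonzero entries sit in positions $\ell$ and $\ell + 1$.

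For part $1^o$ I would specialize Theorem \ref{thm:32} to $\ell = 1$, so that $k = 2$, $r_1 = 1$, $r_2 = -1$. With these values, (3.2a) applied for $n \geq 3$ (which is the range $n > k$ required by the theorem) gives
\[
B_n^{(1)} (t) = (1 + n t)\, B_{n-1}^{(1)} (t) + (n-1)\,(t^{-1} - 1)\, t \, B_{n-2}^{(1)} (t),
\]
and the second term simplifies to $(n-1)(1-t) B_{n-2}^{(1)}(t)$, which is precisely (\ref{eqn:45a}). This step is purely algebraic and carries no real obstacle.

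For part $2^o$ I would take $\ell \geq 2$, so that $k = \ell + 1$ and the input vector has $r_1 = \cdots = r_{\ell -1} = 0$, $r_\ell = 1$, $r_{\ell +1} = -1$. Applying (3.2a) for $n \geq \ell + 2$, the leading factor $1 + n t^{r_1}$ collapses to $n+1$, and within the sum $\sum_{j=2}^{\ell + 1}$ every term with $2 \leq j \leq \ell - 1$ vanishes because $t^{r_j} - 1 = 0$. Only the terms $j = \ell$ and $j = \ell + 1$ survive. For $j = \ell$ one has $r_1 + \cdots + r_{\ell - 1} = 0$ and $t^{r_\ell} - 1 = t - 1$, contributing $(n-\ell+1)(t-1)\, B_{n-\ell}^{(\ell)}(t)$. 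For $j = \ell + 1$ one has $r_1 + \cdots + r_\ell = 1$ and $t^{r_{\ell+1}} - 1 = t^{-1} - 1$, contributing $(n-\ell)(t^{-1} - 1)\, t \, B_{n-\ell-1}^{(\ell)}(t) = -(n-\ell)(t-1)\, B_{n-\ell-1}^{(\ell)}(t)$. Factoring $(t-1)$ out of these two surviving terms yields exactly (\ref{eqn:45b}).

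The main ``obstacle'' here is really just the careful bookkeeping of which entries of $\underline{r}$ are zero and which are not, so that one correctly identifies which summands in (3.2a) collapse; the identification made in Example \ref{example:29} is precisely what makes the general formula of Theorem \ref{thm:32} specialize so cleanly, and no genuine further work is required.
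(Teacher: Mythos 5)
Your proposal is correct and follows exactly the paper's own route: the paper also proves both parts by specializing Theorem \ref{thm:32} to the input vectors $(1,-1)$ and $(0,\ldots,0,1,-1)\in\bR^{\ell+1}$ identified in Example \ref{example:29}, leaving the algebraic simplification implicit where you have written it out. Your bookkeeping of which summands vanish and the simplification $(t^{-1}-1)\,t = 1-t$ are both accurate, so nothing is missing.
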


\begin{proof}
Both Equations (\ref{eqn:45a}) and (\ref{eqn:45b}) are special cases of the general 
recursion formula (\ref{eqn:32a}) found in Theorem \ref{thm:32}, where we take into account
that (as seen in Example \ref{example:29}) the random variables 
$\bigl(Y_n^{(1)} \bigr)_{n=1}^{\infty}$ are recursive of the first kind with input vector 
$(1,-1)$, while for every $\ell \geq 2$ the random variables 
$\bigl( Y_n^{ ( \ell ) } \bigr)_{n=1}^{\infty}$ are recursive of the first kind with input 
vector $(0, \ldots, 0, 1, -1) \in \bR^{\ell + 1}$.
\end{proof}

\vspace{6pt}

\begin{proposition}   \label{prop:46}
Notation as above.

\vspace{6pt}

\noindent
$1^o$ One has that
$E[ \, Y_n^{(1)} \, ] = n - 2 H_n + \frac{10}{3} - \frac{3}{n+1},
\ \ n \geq 3,$
with the consequence that
\begin{equation}  \label{eqn:46a}
E[ \, Y_n^{(1)} \, ] \approx n - 2 \, \ln n + \bigl( \frac{10}{3} - 2 \gamma \bigr)
\ \ \mbox{ for $n \to \infty$.}
\end{equation}

\vspace{6pt}

\noindent
$2^o$ One has that
$E[ \, Y_n^{(2)} \, ] 
= H_n -\frac{51}{24} + \frac{6n-1}{2n(n+1)}, \ \ n \geq 4,$
with the consequence that
\begin{equation}  \label{eqn:46b}
E[ \, Y_n^{(2)} \, ] \approx \ln n - \bigl( \frac{51}{24} - \gamma \bigr)
\ \ \mbox{ for $n \to \infty$.}
\end{equation}

\vspace{6pt}

\noindent
$3^o$ For fixed $\ell \geq 3$ and $n \to \infty$, the sequence
$\bigl( \, E[ Y_n^{ ( \ell ) } ] \, \bigr)_{n=1}^{\infty}$
has a limit $c_{\ell} \in [0, \infty )$.
\vspace{6pt}

\noindent
$4^o$ Same as in Remark \ref{rem:18} of the Introduction, let
$Y_n^{ ( \geq 3 ) } : \NCord (n) \to \bR$ be the random variable which counts
all blocks of cardinality $\geq 3$,
\[
Y_n^{ ( \geq 3 ) } ( \pi, u ) 
:= \ \vline \, \{ V \in \pi \mid \, |V| \geq 3 \} \ \vline \ ,
\ \ \forall \, ( \pi, u ) \in \NCord (n).
\]
Then $E[ Y_n^{(\geq 3)} ] = \frac{7}{24} - \frac{2n-1}{2n (n+1)}$,
$n \geq 4$, with the consequence that
$\lim_{n \to \infty} E[ \, Y_n^{(\geq 3)} \, ] = \frac{7}{24}.$
\end{proposition}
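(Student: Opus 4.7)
The plan is to deduce all four parts from the Laplace-transform recursions of Lemma \ref{lemma:45}, combined with the identity $E[Y_n^{(\ell)}] = (B_n^{(\ell)})'(1)/B_n^{(\ell)}(1)$ from Remark \ref{def:31} and the normalization $B_n^{(\ell)}(1) = |\NCord(n)| = (n+1)!/2$. For parts $1^o$ and $2^o$ I would differentiate Equations (\ref{eqn:45a}) and (\ref{eqn:45b}) once in $t$, specialize to $t=1$, and divide through by $B_n^{(\ell)}(1)$. In each case this yields a first-order recursion of the shape $E[Y_n^{(\ell)}] = E[Y_{n-1}^{(\ell)}] + \rho_n$ for an explicit rational $\rho_n$; standard partial fractions rewrite $\rho_n$ as a sum of harmonic-type pieces and $1/(n(n+1))$-type pieces. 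Telescoping from a base case (verified by direct inspection of $\NCord(2)$ or $\NCord(3)$, i.e.\ before the recursion of Lemma \ref{lemma:45} first engages) produces the stated closed forms, and the asymptotics (\ref{eqn:46a}), (\ref{eqn:46b}) drop out of $H_n = \ln n + \gamma + o(1)$.

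For part $3^o$, the same differentiation applied to (\ref{eqn:45b}) for general $\ell \geq 3$ yields
\begin{equation*}
E[Y_n^{(\ell)}] - E[Y_{n-1}^{(\ell)}] = \frac{(n-\ell+1)(n-\ell+1)! - (n-\ell)(n-\ell)!}{(n+1)!}, \quad n \geq \ell+2.
\end{equation*}
The decisive point is that the right-hand side is $O(n^{-(\ell-1)})$ as $n \to \infty$: the numerator is of order $(n-\ell)! \cdot n^2$, while the denominator factors as $(n-\ell)!$ times $\ell+1$ factors each of order $n$. Hence the increments are absolutely summable exactly when $\ell - 1 > 1$, i.e.\ when $\ell \geq 3$, and since $E[Y_n^{(\ell)}] \geq 0$ the sequence $(E[Y_n^{(\ell)}])_{n}$ converges to a finite non-negative constant $c_\ell$.

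Part $4^o$ is an immediate consequence of the linearity identity $Y_n^{(\geq 3)} = Y_n - Y_n^{(1)} - Y_n^{(2)}$, the expectation formula for $Y_n$ in Theorem \ref{thm:16}, and parts $1^o$--$2^o$. The $H_n$ terms cancel, the coefficients of $1/(n+1)$ combine, and a brief simplification collapses the remaining rational pieces to $\frac{7}{24} - \frac{2n-1}{2n(n+1)}$; the limit $\frac{7}{24}$ is then immediate.

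The main obstacle I anticipate is bookkeeping rather than anything conceptual: one has to differentiate (\ref{eqn:45a})--(\ref{eqn:45b}) carefully so that the $(t-1)$ factor only contributes through its derivative at $t=1$, to express the resulting ratios via $B_k^{(\ell)}(1) = (k+1)!/2$, and to respect the valid ranges of the recursions (which forces the small base cases to be computed by direct enumeration rather than from the recursion itself). Once the exact increment formulas are in hand, the telescoping sums and the asymptotic estimate in part $3^o$ are routine.
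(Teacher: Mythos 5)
Your proposal is correct and follows essentially the same route as the paper: differentiate the recursions of Lemma \ref{lemma:45} at $t=1$, convert to first-order recursions for the expectations via $[B_n^{(\ell)}]'(1) = \frac{(n+1)!}{2}E[Y_n^{(\ell)}]$, telescope with partial fractions for $\ell=1,2$, bound the (positive, $O(n^{1-\ell})$) increments for $\ell\geq 3$, and obtain $4^o$ from $Y_n^{(\geq 3)} = Y_n - Y_n^{(1)} - Y_n^{(2)}$. Your increment formula for part $3^o$ agrees with the paper's Equation (\ref{eqn:46f}), and the summability bound matches the paper's estimate by $1/(n(n+1))$.
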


\begin{proof} $1^o$ When we differentiate both sides of Equation (\ref{eqn:45a}) and 
evaluate the result at $t=1$, we find that
\[  
[ B_n^{(1)} ]' (1) 
= n B_{n-1} (1) + (n+1) [ B_{n-1}^{(1)} ]' (1)
- (n-1) \, B_{n-2}^{(1)} (1), \\ 
\]
\begin{equation}  \label{eqn:46c}
= n \cdot \frac{n!}{2} + (n+1) [ B_{n-1}^{(1)} ]' (1)
- (n-1) \, \frac{(n-1)!}{2}, \ \ n \geq 3,
\end{equation}
where the second equality takes into account that
$B_{n-1} (1) = | \NCord (n-1) | = n!/2$, and likewise for 
$B_{n-2} (1) = (n-1)!/2$.  But, as noted in Equation (\ref{eqn:31b}) 
from Remark \ref{def:31}, the derivative at $1$ of a combinatorial Laplace transform is 
related to the expectation of the corresponding random variable, in a way which allows
us to write
\begin{equation}  \label{eqn:46d}
[ B_n^{(1)} ]' (1) = \frac{(n+1)!}{2} \, E[ \, Y_n^{(1)} \, ],
\ \ [ B_{n-1}^{(1)} ]' (1) = \frac{n!}{2} \, E[ \, Y_{n-1}^{(1)} \, ].
\end{equation}
Substituting (\ref{eqn:46d}) into (\ref{eqn:46c}) leads to the recursion 
\begin{equation}    \label{eqn:46e}
E[ \, Y_n^{(1)} \, ] = E[ \, Y_{n-1}^{(1)} \, ] 
+ 1 + \frac{1}{n} - \frac{3}{n+1}, \ \ n \geq 3.
\end{equation}
From (\ref{eqn:46e}), an easy induction on $n$ leads to the explicit
formula for $E[ \, Y_n^{(1)} \, ]$ stated in the proposition.  This, in turn,
immediately entails the asymptotics (\ref{eqn:46a}).

\vspace{6pt}

\noindent
$2^o \& 3^o$  For every $\ell \geq 2$, one can perform the same steps as indicated
in the proof of $1^o$ above, but starting now from Equation (\ref{eqn:45b}) of
Lemma \ref{lemma:45}.2.  Upon performing the necessary differentiation, followed 
by setting $t=1$ and by replacing the derivatives $[ B_n^{( \ell )} ]' (1)$, 
$[ B_{n-1}^{( \ell )} ]' (1)$ in terms of the expectations
$E[ Y_n^{( \ell )} ], \, E[ Y_{n-1}^{ (\ell) } ]$, we find that
\[
[ B_n^{( \ell )} ]' (1) = (n+1) \, [ B_{n-1}^{( \ell )} ]' (1)
+ (n- \ell + 1) \cdot \frac{ (n - \ell + 1)!}{2} 
- (n- \ell) \cdot \frac{ (n - \ell)!}{2},
\]
and hence that
\begin{equation}  \label{eqn:46f}
E[Y_n^{(\ell)}] = E[Y_{n-1}^{(\ell)}] + \frac{(n - \ell )!}{(n+1)!} 
\bigl[ (n - \ell + 1)^2 - ( n- \ell) \bigr], \ \ n \geq \ell + 2.
\end{equation}

\vspace{6pt}

For the rest of the discussion, the case $\ell = 2$ goes differently from 
$\ell \geq 3$.

\vspace{6pt}

For $\ell \geq 3$, one only needs to note that (\ref{eqn:46f}) implies the inequality
\[
\vline \ E[Y_n^{(\ell)}] - E[Y_{n-1}^{(\ell)}] \ \vline
\leq \frac{(n - \ell )!}{(n+1)!} 
\bigl[ (n - \ell + 1) (n - \ell + 2) \bigr] \leq \frac{1}{n(n+1)}.
\]
Since $\sum_{n=1}^{\infty} \frac{1}{n(n+1)} < \infty$, this implies that the sequence of 
$E[Y_n^{(\ell)}]$'s converges for $n \to \infty$.

\vspace{6pt}

For $\ell = 2$, the recursion (\ref{eqn:46f}) reduces to
\begin{align*}
E[ \, Y_n^{(2)} \, ] 
& = E[ \, Y_{n-1}^{(2)} \, ] + \frac{n^2 - 3n + 3}{(n-1)n(n+1)}
& = E[ \, Y_{n-1}^{(2)} \, ] +  
 \Bigl[ \, \frac{1}{2} \cdot \frac{1}{n-1} \cdot - 3 \cdot \frac{1}{n} 
         + \frac{7}{2} \cdot \frac{1}{n+1} \, \Bigr],
\end{align*}
holding for $n \geq 4$.  Telescoping the latter formula leads to
\begin{equation}   \label{eqn:46g}
E[ \, Y_n^{(2)} \, ] =  E[ \, Y_3^{(2)} \, ] - \frac{11}{24} 
+ \Bigl[ \, \frac{1}{5} + \cdots + \frac{1}{n-1} \, \Bigr]
+ \Bigl[ \, \frac{1}{2} \cdot \frac{1}{n} + \frac{7}{2} \cdot \frac{1}{n+1} \, \Bigr].
\end{equation}
Finally, upon completing the harmonic number $H_n$ in (\ref{eqn:46g}) 
one finds the concrete expression for $E[Y_n^{(2)}]$ stated in $2^o$, 
which has the asymptotics (\ref{eqn:46b}) as an immediate consequence.

\vspace{6pt}

\noindent
$4^o$ This follows by taking expectations in
the relation $Y_n^{ ( \geq 3 ) } = Y_n - Y_n^{(1)} - Y_n^{(2)}$
(with $Y_n$ as in Theorem \ref{thm:16}),
and then by invoking the formulas for expectations obtained in Theorem \ref{thm:16}.1 
and in $1^o, 2^o$ of the present proposition.
\end{proof}

\vspace{10pt}

\begin{remark}  \label{rem:47}
$1^o$ The limits $c_{\ell}$ mentioned in Proposition \ref{prop:46}.3 can be 
explicitly computed by doing partial fraction decomposition in the formula
for $E[ Y_n^{( \ell )} ] - E[ Y_{n-1}^{( \ell )} ]$ which comes out 
of Equation (\ref{eqn:46f}).  For instance: for $\ell = 3$, this partial 
fraction decomposition is
\[
E[Y_n^{(3)}] -  E[Y_{n-1}^{(3)}] 
= \frac{1}{6} \cdot \frac{1}{n-2} - \frac{3}{2} \cdot \frac{1}{n-1} 
  + \frac{7}{2} \cdot \frac{1}{n} - \frac{13}{6} \cdot \frac{1}{n+1},
\ \ n \geq 5.
\]
We leave it as an exercise to the patient reader to check that,
upon telescoping the latter expression (in order to reduce $E[Y_n^{(3)}]$ 
to $E[Y_{n-1}^{(3)}]$, then to $E[Y_{n-2}^{(3)}] , \ldots ,$ all the way 
to $E[Y_{4}^{(3)}] = 1/10$), one finds that 
$\lim_{n \to \infty} E[Y_n^{(3)}]= 23/90$.

\vspace{6pt}

\noindent
$2^o$ The formula found in part $4^o$ of the preceding proposition can be obtained
by the same method as shown in the proof of parts $1^o - 3^o$.  Indeed, since the 
sequence of random variables $\bigl( \, Y_n^{( \geq 3)} \, \bigr)_{n=1}^{\infty}$ is 
recursive of the first kind with input vector $\ur = (0,0,1) \in \bR^3$, 
Theorem \ref{thm:32} gives us that the corresponding combinatorial Laplace transforms
$B_n^{ ( \geq 3 ) }$ satisfy the recursion
\[
B_n^{ ( \geq 3 ) } (t) 
= (n+1) \, B_{n-1}^{ ( \geq 3 ) } (t) 
+ (t-1) \, (n-2) \, B_{n-3}^{ ( \geq 3 ) } (t), \ \ n \geq 4.
\]
This further implies a recursion for the expectations $E[ Y_n^{( \geq 3 )} ]$, and 
entails the formulas stated in Proposition \ref{prop:46}.4.
\end{remark}

\vspace{10pt}

\section{An aside to Section \ref{subsection:4-1}: 
moments of the monotonic Poisson process}
\label{section:5}

\setcounter{equation}{0}

\noindent
In this section we do a brief detour from the main line of the paper, to 
point out that the considerations on the block-counting statistic from 
Section \ref{subsection:4-1} can be cast in a non-commutative probability 
setting, where they become considerations on {\em monotonic cumulants}. 
In order to review the latter notion, 
we will use (as commonly done by people working in this area) a simplified 
algebraic version for what is a probability distribution on the real line. 
We want to look at such distributions where every polynomial is integrable,
which makes it convenient to go with the following definition.

\vspace{6pt}

\begin{definition}  \label{def:51}
By {\em algebraic distribution} we will mean a linear functional
$\mu : \bC [X] \to \bC$ with the property that $\mu (1) = 1$. For such a
functional, it is customary that the number $\mu (X)$ is called 
{\em expectation} of $\mu$; and in general, for every $n \in \bN$, the number
$\mu (X^n) \in \bC$ will be referred to as the {\em moment of order $n$} of $\mu$.
\end{definition}

\vspace{6pt}

\begin{remark-and-definition}  \label{rem:52}
Let $\mu : \bC[X] \to \bC$ be an algebraic distribution. There exists a  
sequence of complex numbers $(\, c_n ( \mu ) \, )_{n=1}^{\infty}$, uniquely determined, 
such that
\begin{equation}   \label{eqn:52a}
\mu (X^n) = \sum_{ (\pi, u) \in \NCord (n) } \, \Bigl[ \, \frac{1}{ | \pi | ! } \cdot 
\, \prod_{V \in \pi} c_{|V|} ( \mu ) \, \Bigr], \ \forall \, n \in \bN.
\end{equation}
The numbers $c_n ( \mu )$ are called {\em monotonic cumulants} of $\mu$, and 
Equation (\ref{eqn:52a}) goes under the name of 
``moment-cumulant formula'' (for monotonic cumulants).
Upon grouping terms on the right-hand side, we can also write it in the form:
\begin{equation}   \label{eqn:52b}
\mu (X^n) = \sum_{\pi \in NC(n)} \, \Bigl[ \, \frac{\mathrm{monord} ( \pi )}{ | \pi | ! } \cdot 
\, \prod_{V \in \pi} c_{|V|} ( \mu ) \, \Bigr], \ \, n \in \bN,
\end{equation}
where $\mathrm{monord} ( \pi )$ stands for the number of monotonic orderings of
$\pi \in NC(n)$.  The ratio $\mathrm{monord} ( \pi ) /  | \pi | !$ can be thought of as 
the probability that a randomly picked ordering of the blocks of $\pi$ is monotonic. This 
ratio is equal to $1$ for an interval partition (when every $V \in \pi$ is a 
sub-interval of $\{ 1, \ldots , n \}$), but is $<1$ when nestings 
among the blocks of $\pi$ impose restrictions on what orderings of $\pi$ are monotonic.

The existence of a sequence $( \, c_n ( \mu ) \, )_{n=1}^{\infty}$,
uniquely determined, such that Equation (\ref{eqn:52a}) holds, is easily seen by isolating 
in the sum on the right-hand side the unique term which has 
$\pi = \bigl\{ \, \{1, \ldots , n\} \, \bigr\} =:1_n$ (the partition with only one block).
That special term is equal to $c_n ( \mu )$, which allows us to re-write (\ref{eqn:52b}) 
in the form 
\begin{equation}   \label{eqn:52c}
c_n ( \mu ) = \mu (X^n) - \sum_{ \substack{ \pi \in NC(n), \\ \pi \neq 1_n} }
 \Bigl[ \, \frac{\mathrm{monord} ( \pi )}{ | \pi | ! } \cdot 
\, \prod_{V \in \pi} c_{|V|} ( \mu ) \, \Bigr], 
\end{equation}
and then use induction in order to write an explicit formula for $c_n ( \mu )$ in terms 
of the moments $\mu (X^k), \ 1 \leq k \leq n$.
For instance, for the first few values of $n$ we get:
\[
\mu (X) = c_1 ( \mu ), \ \mu (X^2) = c_2 ( \mu ) + \bigl( \, c_1 ( \mu ) \, \bigr)^2,
\ \mu (X^3) = c_3 (\mu) + \frac{5}{2} c_1 ( \mu) \, c_2 ( \mu ) + \bigl( \, c_1 ( \mu ) \, \bigr)^3,
\]
which gives 
\[
c_1 ( \mu ) = \mu (X), \ c_2 ( \mu ) = \mu (X^2) - \bigl( \, \mu (X) \, \bigr)^2,
\ c_3 ( \mu ) = \mu (X^3) - \frac{5}{2} \mu (X) \, \mu (X^2) + \frac{3}{2} \bigl( \, \mu (X) \, \bigr)^3.
\]

Another easily derived consequence of (\ref{eqn:52b}) and (\ref{eqn:52c}) is this:
for every sequence of complex numbers $( \alpha_n )_{n=1}^{\infty}$ there 
exists an algebraic distribution $\mu$, uniquely determined, such that
$c_n ( \mu ) = \alpha_n$ for all $n \in \bN$.  In other words, it is possible to 
define an algebraic distribution $\mu$ via a prescription of what are the monotonic 
cumulants $c_n ( \mu )$.  This can in particular be used (following a well-established 
pattern from other brands of non-commutative probability) in order to pinpoint the 
monotonic version of Poisson distribution, as follows.
\end{remark-and-definition}

\vspace{6pt}

\begin{notation}   \label{def:53}
For every $\alpha \in [ 0, \infty )$ we will let $\nu_{\alpha} : \bC [X] \to \bC$ be the 
algebraic distribution defined via the requirement that it has monotonic cumulants
\begin{equation}   \label{eqn:53a}
c_n ( \nu_{\alpha} ) = \alpha, \ \ \forall \, n \in \bN.
\end{equation}
The family of distributions $\bigl( \, \nu_{\alpha} \, \bigr)_{\alpha \in [0, \infty )}$
goes under the name of {\em monotonic Poisson process}.
\end{notation}

\vspace{6pt}

In \cite{Be2006}, Belton gave a nice description of the moments of the distributions 
$\nu_{\alpha}$, in terms of a double-array of integers $J_k^{(n)}$ (not depending on
$\alpha$) which satisfy a simple recursive relation.  We repeat this here, and observe 
how the said recursive relation can be read by looking at the $\NCord$-tree.

\vspace{6pt}

\begin{proposition}  \label{prop:54}
{\em (\cite[Section 2]{Be2006}.)}
The moments of the monotone Poisson process are  
\begin{equation}   \label{eqn:54a}
\nu_{\alpha} (X^n) = \sum_{k=1}^n J_k^{(n)} \cdot \frac{\alpha^k}{k!},
\ \ \forall \, \alpha \in [0, \infty ), \,  n \in \bN, 
\end{equation}
with the double array $\bigl\{ J_k^{(n)} \mid n,k \in \bN, \, n \geq k \bigr\}$ 
defined by initializing
\begin{equation}   \label{eqn:54b}
J_1^{(n)} = 1 \mbox{ and } J_n^{(n)} = n! \, , \ \ \forall \, n \in \bN
\end{equation}
(in particular $J_1^{(1)} = 1$, $J_1^{(2)} = 1$, $J_2^{(2)} = 2$), 
and then by requiring that
\begin{equation}   \label{eqn:54c}
J_k^{(n)} = J_k^{(n-1)} + n J_{k-1}^{(n-1)}, 
\ \mbox{ for all $n \geq 3$ and $2 \leq k \leq n-1$.}
\end{equation}
\end{proposition}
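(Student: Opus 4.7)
The plan is to identify $J_k^{(n)}$ combinatorially as the number of monotonically ordered non-crossing partitions of $\{1, \ldots, n\}$ with exactly $k$ blocks, and then derive both the moment formula (\ref{eqn:54a}) and the recursion from the $\NCord$-tree structure already developed in Section \ref{section:2}.

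First I would apply the moment-cumulant formula (\ref{eqn:52a}) to $\mu = \nu_{\alpha}$. Since every monotonic cumulant satisfies $c_{|V|}(\nu_{\alpha}) = \alpha$, the product $\prod_{V \in \pi} c_{|V|}(\nu_{\alpha})$ depends only on $|\pi|$ and equals $\alpha^{|\pi|}$. Grouping terms in (\ref{eqn:52a}) by the number of blocks gives
\begin{equation*}
\nu_{\alpha} (X^n) = \sum_{(\pi,u) \in \NCord (n)} \frac{\alpha^{|\pi|}}{|\pi|!}
= \sum_{k=1}^n \, \Bigl| \, \{ (\pi,u) \in \NCord (n) \mid |\pi| = k \} \, \Bigr| \cdot \frac{\alpha^k}{k!}.
\end{equation*}
This forces the definition $J_k^{(n)} := \bigl| \{ (\pi,u) \in \NCord (n) \mid |\pi| = k \} \bigr|$, and (\ref{eqn:54a}) follows. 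It then suffices to verify that these cardinalities satisfy the initial conditions (\ref{eqn:54b}) and the recursion (\ref{eqn:54c}), since together these determine the array $J_k^{(n)}$ uniquely.

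For the initial conditions, $J_1^{(n)} = 1$ because the only $\pi \in NC(n)$ with one block is $\{\{1, \ldots, n\}\}$, which admits a unique monotonic ordering; and $J_n^{(n)} = n!$ because the only $\pi \in NC(n)$ with $n$ blocks is the all-singletons partition, whose blocks are pairwise non-nested and therefore admit all $n!$ orderings as monotonic orderings. For the recursion, I would invoke the homogeneity of the $\NCord$-tree as described in Remark \ref{rem:25}: every $(\rho,v) \in \NCord (n-1)$ has exactly $n+1$ children in the $\NCord$-tree, consisting of $n$ children produced by inserting a singleton block (in any of $n$ possible positions, each giving $|\pi| = |\rho| + 1$) together with $1$ child produced by elongating the distinguished block $J(\rho, v)$ (giving $|\pi| = |\rho|$). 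Partitioning the elements of $\NCord (n)$ with $k$ blocks according to their parents therefore yields
\begin{equation*}
J_k^{(n)} \, = \, 1 \cdot J_k^{(n-1)} \, + \, n \cdot J_{k-1}^{(n-1)},
\end{equation*}
which is precisely (\ref{eqn:54c}). Alternatively, since this recursion is equivalent to $B_n(t) = (1+nt)\, B_{n-1}(t)$ for the polynomial $B_n(t) = \sum_k J_k^{(n)} t^k$, one could cite Lemma \ref{lemma:42} directly.

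I do not expect any serious obstacle: the only point requiring care is the correct bookkeeping in the ``insert singleton versus elongate'' dichotomy of children in the $\NCord$-tree, which is exactly the dichotomy already used in Example \ref{example:28} to show that $(Y_n)_{n=1}^\infty$ is recursive of the first kind with input $(1)$. The heart of the argument is thus the interpretation of $J_k^{(n)}/k!$ as the probability weight coming from level-$k$ vertices of the $\NCord$-tree at depth $n-1$, which makes Belton's recursion visually transparent.
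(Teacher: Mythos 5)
Your proposal is correct and follows essentially the same route as the paper: the paper also defines $J_k^{(n)}$ as the number of $(\pi,u)\in \NCord(n)$ with $|\pi|=k$, derives (\ref{eqn:54a}) by grouping the moment-cumulant sum by block count, and obtains (\ref{eqn:54c}) from the singleton-insertion versus elongation dichotomy of the $\NCord$-tree (phrased there as a decomposition of the level sets according to the block $J(\pi,u)$ and the parent map, which is the same bookkeeping as your partition by parents).
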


\begin{proof} The uniqueness of the numbers $J_k^{(n)}$ is clearly implied 
by the requirements made in (\ref{eqn:54b})+ (\ref{eqn:54c}).  For the
existence part of the proposition, it is convenient to go by putting
\begin{equation}   \label{eqn:54d}
J_k^{(n)} := \ \vline \bigl\{ \, ( \pi, u ) \in \NCord (n) \mid
| \pi | = k \, \bigr\} \ \vline \ , \mbox{ for $1 \leq k \leq n$ in $\bN$.}
\end{equation}
Then for every $\alpha \in [0, \infty )$, the moment-cumulant formula (\ref{eqn:52a})
used for $\nu_{\alpha}$ gives:
\begin{align*}
\nu_{\alpha} (X^n) 
& = \sum_{ ( \pi, u ) \in \NCord (n)}  
\frac{1}{ | \pi | !} \cdot \alpha^{ | \pi | }   \\
& = \sum_{k=1}^n \Bigl[ 
\, \sum_{ \substack{ (\pi, u) \in \NCord (n) \\ \mathrm{with} \ | \pi | = k } }
\frac{ \alpha^k }{k!} \, \Bigr] \, = 
\, \sum_{k=1}^n J_k^{(n)} \cdot \frac{ \alpha^k }{k!}, \ n \in \bN,
\end{align*}
showing that (\ref{eqn:54a}) holds.  It is also immediate that the numbers defined
by (\ref{eqn:54d}) satisfy the initial conditions (\ref{eqn:54b}), since 
$J_1^{(n)}$ counts the unique ordering of the partition $1_n$, while 
$J_n^{(n)}$ counts the $n!$ orderings of the partition 
$0_n := \bigl\{ \, \{1\}, \, \{2 \}, \ldots , \{n\} \, \bigr\} \in NC(n)$.
We are thus left to verify that the $J_k^{(n)}$'s from (\ref{eqn:54d}) satisfy 
the recursion relation (\ref{eqn:54c}).  

For the rest of the proof we fix $n \geq 3$ and $2 \leq k \leq n-1$, for which we 
will verify that (\ref{eqn:54c}) holds. Towards that end we decompose the set 
$\bigl\{ ( \pi, u ) \in \NCord (n) \mid  \, | \pi | = k \bigr\}$ according to whether
the block $J( \pi, u )$ of $\pi$ is a singleton-block or not. We get
\[
\bigl\{ \, ( \pi, u ) \in \NCord (n) \mid
| \pi | = k \, \bigr\} = S_0 \cup S_1 \cup \cdots \cup S_n, 
\mbox{ disjoint union,}
\]
where 
$S_0 := \bigl\{ \, ( \pi, u ) \in \NCord (n) \mid
| \pi | = k \mbox{ and } | \, J( \pi, u ) \, | \geq 2 \bigr\},$
and where for every $1 \leq m \leq n$ we put 
$S_m := \bigl\{ \, ( \pi, u ) \in \NCord (n) \mid
| \pi | = k \mbox{ and } J( \pi, u ) = \{ m \} \bigr\}.$
We leave it as a straightforward exercise to the reader to use the description 
of the parent map $\parent$ of the $\NCord$-tree (cf.~Definition \ref{def:22}) in order to 
check that, for every $1 \leq m \leq n$, the map $\parent$ sends $S_m$ bijectively onto 
$\bigl\{ \, ( \rho, v ) \in \NCord (n-1) \mid | \rho | = k-1 \, \bigr\}$, with the 
consequence that 
$| S_m | = \ \vline \
\bigl\{ \, ( \rho, v ) \in \NCord (n-1) \mid | \rho | = k-1 \, \bigr\} \ \vline
\ = J_{k-1}^{(n-1)}.$
A similar exercise   shows that $\parent$ sends $S_0$ bijectively onto 
$\bigl\{ \, ( \rho, v ) \in \NCord (n-1) \mid | \rho | = k \, \bigr\}$, 
with the consequence that $| S_0 | = J_k^{(n-1)}$.
By putting these things together we find that 
\[
J_k^{(n)} 
= \ \vline \ \bigl\{ \, ( \pi, u ) \in \NCord (n) \mid | \pi | = k \bigr\} 
\ \vline \ = |S_0| + |S_1| + \cdots + |S_n| 
= J_k^{(n-1)} + n J_{k-1}^{(n-1)},
\]
as required.
\end{proof}

\vspace{6pt}

In \cite[Section 2]{Be2006} it is explained that the numbers $J_k^{(n)}$
defined by the recursion (\ref{eqn:54c}) are part of 
a larger family of ``generalized Stirling numbers of first kind''. 
The first few lines of the double-array $[ J_k^{(n)} ]_{n,k}$ are shown in 
Figure 4 below.  It is actually possible to give an explicit formula for an 
individual $J_k^{(n)}$, as shown in Equation (\ref{eqn:55b}) of the next 
proposition.  We note that in the analytic approach of Belton, the statements 
of Propositions \ref{prop:54} and \ref{prop:55} came in reverse order, with 
the formula (\ref{eqn:55b}) derived first, in the earlier paper \cite{Be2005}.

\vspace{10pt}

\begin{center}

$\begin{array}{cccccccc}
      &  \vline  &(k=1) &(k=2) & (k=3) & (k=4) & (k=5) & (k=6)     \\ \hline
      &  \vline  &      &      &       &        &        &          \\
(n=1) &  \vline  &  1   &      &       &        &        &          \\
(n=2) &  \vline  &  1   &   2  &       &        &        &          \\
(n=3) &  \vline  &  1   &   5  &    6  &        &        &          \\
(n=4) &  \vline  &  1   &   9  &   26  &   24   &        &          \\
(n=5) &  \vline  &  1   &  14  &   71  &  154   &   120  &          \\
(n=6) &  \vline  &  1   &  20  &  155  &  580   &  1044  &  720
\end{array}$

\vspace{10pt}

Figure 4. 
{\em The numbers $J_k^{(n)}$ for $n \leq 6$ and $k \in \{ 1, \ldots , n \}$.  }
\end{center}

\vspace{6pt}

\begin{proposition}   \label{prop:55}
{\em (\cite[Section 4]{Be2005}.)}
Let $n \geq 2$ be in $\bN$, and consider the numbers $J_1^{(n)}$, 
$J_2^{(n)}, \ldots , J_n^{(n)}$ introduced in Proposition \ref{prop:54}.  
One has the polynomial factorization
\begin{equation}   \label{eqn:55a}
J_1^{(n)} t + J_2^{(n)} t^2 + \cdots + J_n^{(n)} t^n
= t( 1 + 2t ) \cdots ( 1 + nt ), 
\end{equation} 
with the consequence that one can explicitly write
\begin{equation}   \label{eqn:55b}
J_k^{(n)} = \sum_{2 \leq j_1 < \cdots < j_{k-1} \leq n}
j_1 \cdots j_{k-1}, \ 2 \leq k \leq n.
\end{equation} 
\end{proposition}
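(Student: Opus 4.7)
The plan is to recognize that equation (5.5.a) is already implicit in the material developed in Section 4.1, and then derive (5.5.b) by routine expansion. Specifically, combining the definition of $J_k^{(n)}$ in (\ref{eqn:54d}) with the definition of the combinatorial Laplace transform $B_n(t) = \sum_{(\pi,u) \in \NCord(n)} t^{|\pi|}$ from Notation-and-Remark \ref{def:41}, one has immediately
\[
B_n(t) = \sum_{k=1}^n J_k^{(n)} \, t^k,
\]
just by grouping the terms of $B_n$ according to the common value $|\pi| = k$.

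The first step, then, is to invoke Lemma \ref{lemma:42}, which gives the factored form $B_n(t) = t(1+2t)(1+3t) \cdots (1+nt)$ for $n \geq 2$. Combining this with the display above yields (\ref{eqn:55a}) on the nose. So the bulk of the work was already done back in Section \ref{subsection:4-1} via the recursion of the first kind for the block-count statistic $Y_n$, and no new combinatorial argument is needed here.

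For the explicit formula (\ref{eqn:55b}), the plan is to expand the right-hand side of (\ref{eqn:55a}) using the distributive law. Writing $t(1+2t)(1+3t) \cdots (1+nt) = t \cdot \prod_{j=2}^n (1+jt)$, one sees that the coefficient of $t^k$ in the full product equals the coefficient of $t^{k-1}$ in $\prod_{j=2}^n(1+jt)$. The latter is, by definition, the elementary symmetric polynomial $e_{k-1}(2,3,\ldots,n) = \sum_{2 \leq j_1 < \cdots < j_{k-1} \leq n} j_1 \cdots j_{k-1}$, which matches the stated expression for $J_k^{(n)}$. Matching coefficients of $t^k$ on the two sides of (\ref{eqn:55a}) then yields (\ref{eqn:55b}).

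There is really no serious obstacle here; the only thing to be careful about is the bookkeeping of indices (the product runs over $j \in \{2, \ldots, n\}$, so the sum in (\ref{eqn:55b}) picks $k-1$ indices from this range, not $k$ indices from $\{1, \ldots, n\}$). As an optional sanity check, one can verify the boundary cases directly from (\ref{eqn:55b}): for $k=1$ the empty product gives $J_1^{(n)} = 1$, and for $k=n$ one gets $J_n^{(n)} = 2 \cdot 3 \cdots n = n!$, in agreement with the initial conditions (\ref{eqn:54b}).
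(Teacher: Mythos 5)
Your proposal is correct and follows essentially the same route as the paper: both identify $\sum_k J_k^{(n)} t^k$ with the combinatorial Laplace transform $B_n(t)$ via the definition (\ref{eqn:54d}), invoke the factorization from Lemma \ref{lemma:42}, and extract the coefficient of $t^k$. Your extra detail on the elementary symmetric polynomial expansion and the boundary checks is a harmless elaboration of the paper's final sentence.
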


\begin{proof}  It is convenient to take advantage of the framework 
developed in Section \ref{subsection:4-1}, which we do by using 
the formula for $J_k^{(n)}$ provided by Equation (\ref{eqn:54d}) 
from the proof of Proposition \ref{prop:54}.  We get:
\[
\sum_{k=1}^n J_k^{(n)} t^k
= \sum_{k=1}^n \Bigl[ \, \sum_{ \substack{ (\pi, u) \in \NCord (n) \\ 
                                     \mathrm{with} \ | \pi | = k } }
t^{ | \pi | }  \, \Bigr] \, = \, \sum_{(\pi,u) \in \NCord (n)} t^{ | \pi | }
= B_n (t),
\]
where $B_n$ is the polynomial introduced in Notation \ref{def:41}. 
Lemma \ref{lemma:42} then gives us the factorization (\ref{eqn:55a}), 
and the formula (\ref{eqn:55b}) for $J_k^{(n)}$ follows when we extract 
the coefficient of $t^k$ on the two sides of (\ref{eqn:55a}).
\end{proof}

\vspace{10pt}

\section{Laplace transforms for the
\texorpdfstring{$\NCord$}{NCord}-recursion of the second kind}
\label{section:6}

\setcounter{equation}{0}

\noindent
We now take on a development analogous to the one from
Sections \ref{section:3} and \ref{section:4}, in connection to the 
$\NCord$-recursion property ``of the second kind'' introduced in 
Section \ref{subsection:2-3}.  More precisely: we first find, in 
Theorem \ref{thm:61}, how this $\NCord$-recursion property yields 
a recursion for the combinatorial Laplace transforms of the random variables 
in question.  Then we see how Theorem \ref{thm:61} can be applied in the setting 
of Example \ref{example:212}, in order to determine the expected number of outer 
blocks of a random $( \pi , u ) \in \NCord (n)$.

\vspace{6pt}

\begin{theorem}    \label{thm:61}
Let $\alpha, \beta, q \in \bZ$ and let $( Z_n : \NCord (n) \to \bR )_{n=1}^{\infty}$ be 
a sequence of random variables which is recursive of the second kind with input 
$( \alpha , \beta , q)$, in the sense of Definition \ref{def:211}.  For every $n \in \bN$, 
let $L_n : ( 0, \infty ) \to \bR$ be the combinatorial Laplace transform of $Z_n$, defined 
by Equation (\ref{eqn:31a}) of Definition \ref{def:31}.  Then one has the recursion:
\begin{equation}  \label{eqn:61a}
L_n (t) =  \Bigl( q \, t^{\alpha} + ((n+1)-q) \, t^{\beta} \Bigr) \cdot L_{n-1} (t)
         + \bigl( t^{\alpha + 1}-t^{\beta +1} \bigr) \cdot L_{n-1}' (t),
\end{equation}
holding for $n \geq 2$ and $t \in (0, \infty )$.
\end{theorem}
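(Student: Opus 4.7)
The plan is to mimic the bookkeeping used in the proof of Theorem \ref{thm:32}, but now exploiting the two-term split provided by the subset $C_o( \rho, v)$ instead of the size-of-$J$ decomposition. Namely, I would start by organizing the sum defining $L_n(t)$ according to the parent map in the $\NCord$-tree:
\begin{equation*}
L_n(t) \;=\; \sum_{(\pi,u) \in \NCord (n)} t^{Z_n(\pi,u)}
\;=\; \sum_{(\rho,v) \in \NCord (n-1)} \ \sum_{(\pi,u) \in C(\rho,v)} t^{Z_n(\pi,u)}.
\end{equation*}
This is the standard opening move, and it sets up an inner sum indexed by the children of a fixed $(\rho,v)$.

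Next, I would evaluate each inner sum using the recursion of the second kind. For fixed $(\rho,v)$, the children in $C_o(\rho,v)$ contribute $t^{Z_{n-1}(\rho,v)+\alpha}$ each and those in $C(\rho,v) \setminus C_o(\rho,v)$ contribute $t^{Z_{n-1}(\rho,v)+\beta}$ each. Combining this with the homogeneity property (\ref{eqn:14a}), which gives $|C(\rho,v)| = n+1$, and with requirement (3) of Definition \ref{def:211}, which gives $|C_o(\rho,v)| = Z_{n-1}(\rho,v) + q$, the inner sum simplifies to
\begin{equation*}
\bigl( Z_{n-1}(\rho,v) + q \bigr) \, t^{Z_{n-1}(\rho,v)+\alpha}
\; + \; \bigl( (n+1-q) - Z_{n-1}(\rho,v) \bigr) \, t^{Z_{n-1}(\rho,v)+\beta}.
\end{equation*}

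Finally, I would sum this expression over $(\rho,v) \in \NCord(n-1)$ and collect terms by their $t^{\alpha}$ versus $t^{\beta}$ coefficient. The pieces that are linear in the constants $q$ and $n+1-q$ immediately produce $q\,t^{\alpha} L_{n-1}(t)$ and $(n+1-q) t^{\beta} L_{n-1}(t)$. The remaining pieces involve the sum $\sum_{(\rho,v)} Z_{n-1}(\rho,v)\, t^{Z_{n-1}(\rho,v)}$, which by direct differentiation of the definition of $L_{n-1}$ equals $t L_{n-1}'(t)$. Putting this together and factoring out the common $t$ converts the correction into $(t^{\alpha+1} - t^{\beta+1}) L_{n-1}'(t)$, yielding exactly (\ref{eqn:61a}).

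There is no real obstacle here beyond keeping the bookkeeping clean; the only conceptual point worth flagging is the trick of recognizing the $\sum Z_{n-1}\, t^{Z_{n-1}}$ factor as $t L'_{n-1}(t)$, which is what transforms a purely algebraic recursion into a differential one and is the whole reason the theorem has the shape it does.
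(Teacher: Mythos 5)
Your proposal is correct and follows essentially the same route as the paper's own proof: group the sum defining $L_n(t)$ by parent, use conditions (1)--(3) of Definition \ref{def:211} together with $|C(\rho,v)| = n+1$ to evaluate each inner sum as $(Z_{n-1}+q)\,t^{Z_{n-1}+\alpha} + ((n+1)-q-Z_{n-1})\,t^{Z_{n-1}+\beta}$, and then recognize $\sum_{(\rho,v)} Z_{n-1}(\rho,v)\,t^{Z_{n-1}(\rho,v)} = t\,L_{n-1}'(t)$. No gaps.
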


\begin{proof}
Consider an $n \geq 2$ in $\bN$ and a $t \in (0, \infty)$, for which we will verify 
(\ref{eqn:61a}).  Following the same idea as for the proof of Theorem \ref{thm:32}, 
we organize the sum defining $L_n (t)$ as
\begin{equation}  \label{eqn:61b}
L_n (t) = \sum_{( \rho , v) \in \NCord (n-1)} \Bigl[ 
\, \sum_{ ( \pi, u ) \in C( \rho , v) } t^{Z_n ( \pi , u )} \, \Bigr] ,
\end{equation}
where $C( \rho , v )$ denotes the set of children of $( \rho , v )$ in the $\NCord$-tree.

Let us fix for the moment a $( \rho , v ) \in \NCord (n-1)$, 
and examine the inner sum corresponding to $( \rho , v )$ in (\ref{eqn:61b}).
We break this sum into two by using the set $C_o ( \rho , v ) \subseteq C( \rho , v )$ 
provided by the definition of the recursion of the second kind (cf.~(\ref{eqn:211a}) there), 
and get: 
\begin{align*}
\sum_{ ( \pi, u ) \in C( \rho , v ) } t^{Z_n ( \pi,u )}
& = \sum_{ ( \pi, u ) \in C_o( \rho , v) } t^{Z_n ( \pi , u )} 
+ \sum_{ ( \pi, u ) \in C( \rho , v) \setminus C_o( \rho , v ) } t^{Z_n ( \pi , u )} \\
& = | C_o ( \rho , v ) | \cdot t^{Z_{n-1} ( \rho , v ) + \alpha }
+ | C( \rho, v ) \setminus C_o ( \rho , v ) | \cdot t^{Z_{n-1} ( \rho , v ) + \beta }  \\
& = ( Z_{n-1} ( \rho, v ) + q ) \cdot t^{Z_{n-1} ( \rho , v ) + \alpha }
+ ( (n+1) - Z_{n-1}( \rho , v ) -q ) \cdot t^{Z_{n-1} ( \rho , v ) + \beta }     \\
& = t^{Z_{n-1} ( \rho , v )} \, \Bigl(  \, (t^{\alpha} - t^{\beta}) \cdot Z_{n-1} ( \rho, v ) 
    + \bigl( \, q t^{\alpha } + ((n+1) - q) t^{\beta} \, \bigr) \,\Bigr).
\end{align*}

Returning now to Equation (\ref{eqn:61b}), we see that the calculation shown in the 
preceding paragraph decomposes $L_n (t) = \Sigma ' + \Sigma ''$, where
\begin{align*}
\Sigma ' 
& =  \sum_{(\rho,v) \in \NCord (n-1)} \, t^{Z_{n-1}(\rho,v)} \cdot
\bigl( t^{\alpha} - t^{\beta} \bigr) \, Z_{n-1} ( \rho,v )  \\
& = \bigl( t^{\alpha + 1} - t^{\beta + 1} \bigr) \cdot 
\sum_{(\rho,v) \in \NCord (n-1)} \, Z_{n-1} ( \rho, v ) \, t^{Z_{n-1}(\rho,v) - 1}  \\
& = \bigl( t^{\alpha + 1} - t^{\beta + 1} \bigr) \cdot L_{n-1}' (t),
\end{align*}
and
\begin{align*}
\Sigma '' 
& =  \sum_{(\rho,v) \in \NCord (n-1)} \, t^{Z_{n-1}(\rho,v)} \cdot 
\bigl( q \, t^{\alpha} + ((n+1) - q) \, t^{\beta} )  \\
& = \bigl( q \, t^{\alpha} + ((n+1) - q) \, t^{\beta} ) \cdot L_{n-1} (t).
\end{align*}
The expression $\Sigma ' + \Sigma ''$ gives the right-hand side of 
the required formula (\ref{eqn:61a}).
\end{proof}

\vspace{6pt}

\begin{corollary}    \label{cor:62}
In the framework and notation of Theorem \ref{thm:61}, the sequence of expectations
$\bigl( \, E[Z_n] \, \bigr)_{n=1}^{\infty}$ satisfies the recursion
\begin{equation}  \label{eqn:62a}
E[ Z_n ] = \frac{(n+1) + ( \alpha - \beta )}{n+1} \cdot E[ Z_{n-1} ]
+ \frac{\alpha q  + \beta ((n+1)-q)}{n+1}, \ \ n \geq 2.
\end{equation}
\end{corollary}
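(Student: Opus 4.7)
The plan is to derive the recursion for expectations directly by differentiating the combinatorial Laplace transform identity from Theorem \ref{thm:61} and evaluating at $t = 1$. Recall from Equation (\ref{eqn:31b}) in Remark \ref{def:31} that $E[Z_n] = L_n'(1)/L_n(1)$, where the normalization $L_n(1) = | \NCord (n) | = (n+1)!/2$.

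First, I would differentiate both sides of (\ref{eqn:61a}) with respect to $t$. The right-hand side produces four terms via the product rule: two from differentiating the coefficient of $L_{n-1}(t)$, one from the derivative of $L_{n-1}(t)$ itself, plus the derivative of the second product $(t^{\alpha+1} - t^{\beta+1}) L_{n-1}'(t)$. The key simplification when we then set $t = 1$ is that the factor $t^{\alpha+1} - t^{\beta+1}$ vanishes there, so the $L_{n-1}''(1)$ term disappears and we are left with
\begin{equation*}
L_n'(1) = \bigl( q \alpha + ((n+1)-q) \beta \bigr) L_{n-1}(1) + \bigl( (n+1) + (\alpha - \beta) \bigr) L_{n-1}'(1),
\end{equation*}
where I have collected $(q + ((n+1)-q)) + ((\alpha+1) - (\beta+1)) = (n+1) + (\alpha - \beta)$.

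Next I would divide both sides by $L_n(1)$. Using $L_n(1) = (n+1)!/2 = (n+1) \cdot L_{n-1}(1)$, we get
\begin{equation*}
\frac{L_n'(1)}{L_n(1)} = \frac{q\alpha + ((n+1)-q)\beta}{n+1} + \frac{(n+1) + (\alpha - \beta)}{n+1} \cdot \frac{L_{n-1}'(1)}{L_{n-1}(1)},
\end{equation*}
which, after invoking $E[Z_n] = L_n'(1)/L_n(1)$ (and likewise for $n-1$), is exactly the claimed recursion (\ref{eqn:62a}).

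There is no real obstacle here: the only delicate point is keeping the product-rule bookkeeping straight and noticing that the term containing $L_{n-1}''(1)$ is killed by the prefactor $t^{\alpha+1} - t^{\beta+1}$ at $t=1$, which is precisely why differentiation at $t=1$ produces a closed recursion on $L_{n-1}'(1)$ alone (rather than involving higher derivatives). The rest is arithmetic on the coefficients.
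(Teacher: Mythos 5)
Your proposal is correct and follows essentially the same route as the paper: differentiate the recursion \eqref{eqn:61a}, evaluate at $t=1$ (where the prefactor $t^{\alpha+1}-t^{\beta+1}$ kills the $L_{n-1}''$ term), and convert $L_n'(1)$, $L_{n-1}'(1)$ into expectations via the normalizations $(n+1)!/2$ and $n!/2$. The coefficient bookkeeping matches the paper's Equation \eqref{eqn:62b} exactly.
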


\begin{proof}
Differentiating both sides of Equation (\ref{eqn:61a}) and then evaluating them 
at $t=1$ gives
\begin{equation}  \label{eqn:62b}
L_n' (1) = \bigl( \, \alpha q + \beta ((n+1) - q) \, \bigr) \cdot \frac{n!}{2}
+ \bigl( \, (n+1) + ( \alpha - \beta ) \, \bigr) \cdot L_{n-1}' (1).
\end{equation}
We know (as noted in Remark \ref{def:31}) that in (\ref{eqn:62b}) we can substitute 
$L_n' (1) = E[ Z_n ] \cdot (n+1)!/2$ and $L_{n-1}' (1) = E[ Z_{n-1} ] \cdot n!/2$;
doing so leads to the formula (\ref{eqn:62a}) stated in the corollary.
\end{proof}

\vspace{6pt}

\begin{corollary}    \label{cor:63}
Let $\bigl( \, \OuT_n : \NCord (n) \to \bN \, \bigr)_{n=1}^{\infty}$ be the 
sequence of random variables considered in Example \ref{example:212}, where 
$\OuT_n ( \pi , u )$ counts the outer blocks of $\pi$, for $( \pi, u ) \in \NCord (n)$.
One has that
\begin{equation}  \label{eqn:63a}
E[ \OuT_n ] = \frac{2n+1}{3}, \ \ n \in \bN.
\end{equation}
\end{corollary}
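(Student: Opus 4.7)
The plan is to invoke the machinery already set up: Example \ref{example:212} shows that the sequence $( \OuT_n )_{n=1}^\infty$ is recursive of the second kind with input $( \alpha, \beta, q ) = (1, 0, 1)$, and Corollary \ref{cor:62} converts any such recursion of the second kind into a concrete one-step recursion for the expectations. So I would simply substitute $( \alpha, \beta, q ) = (1, 0, 1)$ into formula (\ref{eqn:62a}) and then solve the resulting recursion by induction.

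Concretely, plugging $(1,0,1)$ into (\ref{eqn:62a}) gives, for $n \geq 2$,
\[
E[ \OuT_n ] \;=\; \frac{n+2}{n+1} \cdot E[ \OuT_{n-1} ] \;+\; \frac{1}{n+1}.
\]
For the base case, $\NCord (1)$ has a single element whose only block is the outer singleton $\{1\}$, so $E[\OuT_1] = 1 = (2 \cdot 1 + 1)/3$, matching (\ref{eqn:63a}).

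For the inductive step, assume $E[\OuT_{n-1}] = (2n-1)/3$. Then
\[
E[ \OuT_n ]
= \frac{n+2}{n+1} \cdot \frac{2n-1}{3} + \frac{1}{n+1}
= \frac{(n+2)(2n-1) + 3}{3(n+1)}
= \frac{2n^2 + 3n + 1}{3(n+1)}
= \frac{(2n+1)(n+1)}{3(n+1)}
= \frac{2n+1}{3},
\]
completing the induction. There isn't really a hard step here: the entire substance of the argument has been pushed into Example \ref{example:212} (identifying the correct parameters $( \alpha, \beta, q )$ and the subset $C_o ( \rho, v )$) and into Corollary \ref{cor:62} (the general recursion for expectations). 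The only point that warrants a moment's care is the factorization $2n^2 + 3n + 1 = (2n+1)(n+1)$, which is what makes the recursion collapse so cleanly into the simple closed form $(2n+1)/3$.
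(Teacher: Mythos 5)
Your proposal is correct and follows exactly the paper's own route: cite Example \ref{example:212} for the parameters $(\alpha,\beta,q)=(1,0,1)$, specialize the expectation recursion of Corollary \ref{cor:62} to get $E[\OuT_n] = \frac{n+2}{n+1}E[\OuT_{n-1}] + \frac{1}{n+1}$, and close by induction from $E[\OuT_1]=1$. The paper leaves the induction as "easy"; you have simply carried out the algebra, including the key factorization $2n^2+3n+1=(2n+1)(n+1)$, which checks out.
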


\begin{proof}
We know from Example \ref{example:212} that the sequence of $\OuT_n$'s
is recursive of the second kind, with parameters $\alpha=1$, $\beta = 0$ and $q=1$.
We can thus use the formula (\ref{eqn:62a}) provided by the preceding corollary. 
This says, for the case at hand, that we have
\begin{equation}  \label{eqn:63b}
E[ \OuT_n ] = \bigl( 1 + \frac{1}{n+1} \bigr) \cdot E[ \OuT_{n-1} ]
+ \frac{1}{n+1}, \ \ n \geq 2.
\end{equation}
From (\ref{eqn:63b}), an easy induction (initialized with $E[ \OuT_1 ] = 1$) gives 
the formula (\ref{eqn:63a}).
\end{proof}

\vspace{1cm}

\section{The \texorpdfstring{$\NCord_2$}{NCord2}-tree, and
some combinatorial statistics related to it}
\label{section:7}

\setcounter{equation}{0}

\noindent
We now turn our attention to sets 
of monotonically ordered non-crossing {\em pair-partitions},
\[
\NCord_2 (2n) := \Bigl\{ ( \pi , u ) 
\begin{array}{lr}
\vline  & \pi \in NC_2 (2n)
\mbox{ and } u : \pi \to \{1, \ldots , n\},  \\
\vline  & \mbox{monotonic ordering} \end{array} \Bigr\}, \ \ n \in \bN.
\]
In this section we follow up on the discussion around these sets 
made in Subsection \ref{subsection:1-3} of the Introduction, and we start with
the fact that one has a natural tree structure on the union
$\NCord_2 := \sqcup_{n=1}^{\infty} \NCord_2 (2n)$.

\vspace{10pt}

\subsection{Description of the \texorpdfstring{\boldmath{$\NCord_2$}}{NCord2}-tree.}
\label{subsection:7-1}

\begin{remark}   \label{rem:71}
Since $\NCord_2 (2n) \subseteq \NCord (2n)$, we can and will continue to 
refer to Notation \ref{def:21}.1 and write ``$J( \pi, u )$'' 
for the block of a pair-partition $\pi \in NC_2 (2n)$ which has the maximal 
label in the monotone ordering $u$ of $\pi$.  The specifics of the $( \pi, u )$ 
considered here ensure that $| J( \pi, u ) | = 2$ and $u( \, J( \pi, u ) \, ) = n$.  
Since we additionally know (cf.~Remark \ref{rem:12}) that $J( \pi, u )$ is 
a sub-interval of $\{1, \ldots , 2n \}$, we can actually be certain to have here
$J( \pi, u ) = \{ m, m+1 \}$ for some $1 \leq m \leq 2n-1$. 

\vspace{6pt}

\noindent
$2^o$ We will also continue to use the ``restrict-and-relabel'' operation 
$\pi \mapsto \pi_W$ introduced in Notation \ref{def:21}.2, making sure 
that we only apply it to situations where both $\pi$ and $\pi_W$ belong to 
$\NCord_2$.   

\vspace{6pt}

\noindent
$3^o$ Similarly to the development shown for the $\NCord$-tree in Section \ref{subsection:2-1}, 
we will describe the edges of the $\NCord_2$-tree by indicating how one finds the 
$\NCord_2$-parent of a $( \pi, u ) \in \NCord_2 (2n)$, for $n \geq 2$.  In order to distinguish 
from the similar terminology introduced in Definition \ref{def:22}, we will refer to the 
$\NCord_2$-parent of such a $(\pi , u )$ by denoting it as $\pairparent ( \pi, u )$.  This 
notation fits with the fact that, if $(\pi,u)$ was to be viewed as  vertex of the $\NCord$-tree, 
then $\pairparent ( \pi, u ) \in \NCord_2 (2n-2)$ would come out as ``the grandparent'' (parent of
the parent) of $( \pi , u )$ in that context.
\end{remark}

\vspace{6pt}

\begin{definition}   \label{def:72}
Let $( \pi , u )$ be a monotonically ordered pair-partition in $\NCord_2 (2n)$, where 
$n \geq 2$.  Consider the pair $J( \pi, u ) = \{ m, m+1 \} \in \pi$, and consider  
the restrict-and-relabel partition
\[
\rho := \pi_{ \{1, \ldots , 2n \} \setminus \{ m, m+1 \} } \in NC_2 (2n-2). 
\]
It is easily checked that the monotonic order $u : \pi \to \{ 1, \ldots , n \}$ 
induces a monotonic order $v : \rho \to \{ 1, \ldots , n-1 \}$, defined by the formula:
\begin{equation}   \label{eqn:72a}
\left\{   \begin{array}{l} 
v \bigl( \{ p,q\} \bigr) 
     := u \Bigl( \, \bigl\{ \phi (p), \phi (q) \bigr\} \, \Bigr) 
     \mbox{ for $\{ p,q \} \in \rho$, where $\phi$ is the unique}      \\
\mbox{increasing bijection from $\{1 , \ldots , 2n-2 \}$ 
onto $\{1, \ldots , 2n \} \setminus \{ m, m+1 \}$. } 
\end{array} \right.
\end{equation}
The monotonically ordered pair-partition $( \rho , v ) \in \NCord (2n-2)$ 
obtained in this way will be called {\em the pair-parent} of $( \pi, u )$
and will be denoted as $\pairparent ( \pi , u )$.
\end{definition}

\vspace{6pt}

\begin{example}  \label{example:73}
Suppose that $n = 7$ and that we look at the pair-partition
\[
\pi = \bigl\{ \, \{1,6\}, \, \{2,3\}, \, \{4,5\}, \, \{7,8\},
\, \{9,14\}, \, \{10, 13\}, \, \{11,12\} \, \bigr\} \in NC_2 (14),
\]
considered with the monotonic ordering $u: \pi \to \{ 1, \ldots , 7 \}$
shown in the following picture:

\[ 
(\pi,u) = \begin{tikzpicture}[scale=0.5,baseline={2*height("$=$")}]
        \draw (0,0) -- (0,1.5) -- (5,1.5) -- (5,0);
        \node at (0.3,1.1) {$2$};
        \draw (1,0) -- (1,1) -- (2,1) -- (2,0);
        \node at (1.3,0.6) {$6$};
        \draw (3,0) -- (3,1) -- (4,1) -- (4,0);
        \node at (3.3,0.6) {$3$};
        \draw (6,0) -- (6,1) -- (7,1) -- (7,0);
        \node at (6.3,0.6) {$1$};
        \draw (8,0) -- (8,2) -- (13,2) -- (13,0);
        \node at (8.3,1.6) {$4$};
        \draw (9,0) -- (9,1.5) -- (12,1.5) -- (12,0);
        \node at (9.3,1.1) {$5$};
        \draw (10,0) -- (10,1) -- (11,1) -- (11,0);
        \node at (10.3,0.6) {$7$};
\end{tikzpicture} \in \NCord_2(14) \text{.} 
\]

\vspace{10pt}

\noindent
Then $J(\pi,u) = \{11,12\}$ and the pair-partition
$( \rho, v ) = \pairparent ( \pi, u ) \in \NCord_2 (12)$ is depicted like this:

\[ 
(\rho,v) = \begin{tikzpicture}[scale=0.5,baseline={2*height("$=$")}]
        \draw (0,0) -- (0,1.5) -- (5,1.5) -- (5,0);
        \node at (0.3,1.1) {$2$};
        \draw (1,0) -- (1,1) -- (2,1) -- (2,0);
        \node at (1.3,0.6) {$6$};
        \draw (3,0) -- (3,1) -- (4,1) -- (4,0);
        \node at (3.3,0.6) {$3$};
        \draw (6,0) -- (6,1) -- (7,1) -- (7,0);
        \node at (6.3,0.6) {$1$};
        \draw (8,0) -- (8,1.5) -- (11,1.5) -- (11,0);
        \node at (8.3,1.1) {$4$};
        \draw (9,0) -- (9,1) -- (10,1) -- (10,0);
        \node at (9.3,0.6) {$5$};
\end{tikzpicture} \in \NCord_2(12) \text{.} 
\]
\end{example}

\vspace{6pt}

\begin{notation-and-remark}  \label{def:74}
$1^o$ Let $n \geq 2$ be in $\bN$ and consider a $( \rho , v ) \in \NCord_2 (2n-2)$.
The set of children of $( \rho, v )$ in the $\NCord_2$-tree will be denoted as 
$\CPair ( \rho , v )$.  Thus $\CPair ( \rho , v ) \subseteq \NCord_2 (2n)$,
and upon examining the parent-child relation  
described in Definition \ref{def:72} one easily sees that:
\begin{equation}   \label{eqn:74a}
\left\{  \begin{array}{c}
\mbox{for every $1 \leq m \leq 2n-1$ there exists a 
      $( \pi_m , u_m ) \in \CPair ( \rho, v )$,}          \\
\mbox{uniquely determined, such that } J( \pi_m, u_m ) = \{ m, m+1 \}.
\end{array}   \right.
\end{equation}
As a consequence of (\ref{eqn:74a}), it follows that $| \CPair ( \rho, v ) | = 2n-1$.

\vspace{6pt}

\noindent
$2^o$ For the record, we note here that the homogeneity property (\ref{eqn:1-3b}) stated 
in Subsection \ref{subsection:1-3} of the Introduction is an immediate consequence of 
(\ref{eqn:74a}), and same for the further consequence stated in (\ref{eqn:1-3bb}), that 
$| \NCord_2 (2n) | = (2n-1)!!$ for every $n \in \bN$. 
\end{notation-and-remark}

\vspace{10pt}

\subsection{\texorpdfstring{\boldmath{$\NCord_2$}}{NCord2}-analogue for the recursive
pattern ``of the second kind''.}
\label{subsection:7-2}

$\ $

\noindent
For a random $( \pi, u ) \in \NCord_2 (2n)$, the statistic which counts the 
blocks of $\pi$ is trivial, and same is the case for the statistics which count 
blocks tallied by size.  It is, however, non-trivial to examine the statistic which 
counts the outer blocks of $\pi$.  In what follows we will see how this can be done 
by adjusting to $\NCord_2$-setting the discussion around the recursive condition of 
the second kind which was introduced in Subsection \ref{subsection:2-3} in connection 
to the full $\NCord$-tree. For starters, here is the adjusted version of that notion.

\vspace{6pt}

\begin{definition}  \label{def:75}
Let $\alpha,\beta, q \in \bZ$ be given.  We will say that a sequence 
of random variables $(Z_n : \NCord_2(2n) \to \bR)_{n=1}^{\infty}$ is 
\emph{pair-recursive of the second kind} with input $( \alpha , \beta ; q)$ when 
it has the following property: for every $n \geq 2$ and $(\rho,v) \in \NCord_2(2n-2)$, 
the set $\CPair ( \rho, v )$ of children of $( \rho , v )$ in the $\NCord_2$-tree 
has a subset $\CPair_o ( \rho, v )$ such that:
\begin{equation}   \label{eqn:75a}
\left\{   \begin{array}{ll}
\mathrm{(1)} & \mbox{If $( \pi, u ) \in \CPair_o ( \rho,v )$, then
      $Z_n(\pi,u) = Z_{n-1}(\rho,v) + \alpha$,}    \\ 
\mathrm{(2)} & \mbox{If $( \pi, u ) \in \CPair ( \rho, v ) \setminus \CPair_o ( \rho, v)$, 
                     then $Z_n(\pi,u) = Z_{n-1}(\rho,v) + \beta$,}    \\ 
\mbox{$\ $} \mathrm{ and} &                                        \\
\mathrm{(3)} & \mbox{$| \CPair_o ( \rho, v) | = Z_{n-1}(\rho,v)+q$.}
\end{array}   \right.
\end{equation}
\end{definition}

\vspace{6pt}

\begin{remark-and-notation}   \label{rem:76}
In analogy with what we saw in the full $\NCord$-setting, the recursion 
property from Definition \ref{def:75} entails a recursion for the 
combinatorial Laplace transforms of the random variables in question.  For any 
$n \in \bN$ and $Z_n : \NCord_2 (2n) \to \bR$, the combinatorial Laplace 
transform of $Z_n$ is the function $L_n : (0, \infty ) \to \bR$ defined by a 
recipe similar to the one used in the preceding sections,
\begin{equation}   \label{eqn:76a}
L_n (t) = \sum_{ ( \pi, u ) \in \NCord_2 (2n) } t^{Z_n ( \pi, u )},
\ \ t > 0.
\end{equation}
When $Z_n$ is viewed as a random variable with respect to the uniform distribution 
on $\NCord_2 (2n)$, the function $L_n$ can be used towards computing moments of $Z_n$,
and in particular the expectation of $Z_n$ is retrieved via the formula
\begin{equation}   \label{eqn:76b}
E[ Z_n ] = \frac{L_n ' (1)}{L_n (1)} = \frac{L_n ' (1)}{ (2n-1)!! }
\end{equation}
(similar to the one from the parallel discussion about random variables 
on $\NCord (n)$, but where we now use the normalization constant 
$L_n (1) = | \NCord_2 (2n) | = (2n-1)!!$).

The next theorem and corollary give the $\NCord_2$-analogue for the Theorem \ref{thm:61} 
and Corollary \ref{cor:62} of the preceding section.
The proofs are mutatis mutandis repetitions of arguments already shown in 
Section \ref{section:6}, and are left to the reader.
\end{remark-and-notation}

\vspace{6pt}

\begin{theorem}  \label{thm:77}
Let $\alpha, \beta, q \in \bZ$ and let $( Z_n : \NCord_2 (2n) \to \bR )_{n=1}^{\infty}$ be
a sequence of random variables which is pair-recursive of the second kind with input 
$( \alpha , \beta ; q )$.  For every $n \in \bN$, let $L_n : ( 0, \infty ) \to \bR$ be the 
combinatorial Laplace transform of $Z_n$.  Then one has the recursion:
\begin{equation}  \label{eqn:77a}
L_n (t) = \Bigl( q \, t^{\alpha} + ((2n-1)-q) \, t^{\beta} \Bigr) \cdot L_{n-1} (t)
          + \bigl( t^{\alpha + 1}-t^{\beta +1} \bigr) \cdot L_{n-1}'(t),
\end{equation}
holding for $n \geq 2$ and $t \in ( 0, \infty )$.
\hfill  $\square$
\end{theorem}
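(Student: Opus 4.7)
The plan is to follow the argument of Theorem \ref{thm:61} essentially verbatim, with the $\NCord$-tree replaced by the $\NCord_2$-tree and the count of children updated from $n+1$ to $2n-1$. First, I would fix $n \geq 2$ and $t \in (0, \infty)$, and partition the sum defining $L_n(t)$ by pair-parent:
\begin{equation*}
L_n(t) \;=\; \sum_{(\rho,v) \in \NCord_2(2n-2)} \ \sum_{(\pi,u) \in \CPair(\rho,v)} t^{Z_n(\pi,u)}.
\end{equation*}
This works because every $(\pi,u) \in \NCord_2(2n)$ has a uniquely determined pair-parent, as spelled out in Definition \ref{def:72}.

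Next, I would evaluate the inner sum for a fixed $(\rho,v)$. Splitting $\CPair(\rho,v)$ into $\CPair_o(\rho,v)$ and its complement and invoking conditions (1)--(3) of Definition \ref{def:75}, together with $|\CPair(\rho,v)| = 2n-1$ from Notation \ref{def:74}, the inner sum equals
\begin{equation*}
(Z_{n-1}(\rho,v)+q)\, t^{Z_{n-1}(\rho,v)+\alpha} \ + \ \bigl((2n-1) - Z_{n-1}(\rho,v) - q\bigr)\, t^{Z_{n-1}(\rho,v)+\beta}.
\end{equation*}
Factoring out $t^{Z_{n-1}(\rho,v)}$ and regrouping rewrites this as
\begin{equation*}
t^{Z_{n-1}(\rho,v)}\Bigl[(t^{\alpha}-t^{\beta})\,Z_{n-1}(\rho,v) \ + \ \bigl(q\,t^{\alpha} + ((2n-1)-q)\,t^{\beta}\bigr)\Bigr].
\end{equation*}

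Finally, I would sum over $(\rho,v) \in \NCord_2(2n-2)$ and recognize the two resulting pieces. The term with $Z_{n-1}(\rho,v)$ produces $(t^{\alpha}-t^{\beta}) \sum Z_{n-1}(\rho,v)\, t^{Z_{n-1}(\rho,v)} = (t^{\alpha+1}-t^{\beta+1})\,L_{n-1}'(t)$, while the remaining term gives $\bigl(q\,t^{\alpha} + ((2n-1)-q)\,t^{\beta}\bigr) \cdot L_{n-1}(t)$. Adding these yields the claimed formula (\ref{eqn:77a}). There is no serious obstacle here: the argument is structurally identical to the one for Theorem \ref{thm:61}, and the only arithmetic change is that the constant $n+1$ (the number of children in the $\NCord$-tree) is replaced by $2n-1$ (the number of children in the $\NCord_2$-tree). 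The mild care point is to confirm that the differentiation identity $\sum Z_{n-1}(\rho,v)\, t^{Z_{n-1}(\rho,v)} = t\, L_{n-1}'(t)$ still applies (it does, since $L_{n-1}$ is built by the same recipe (\ref{eqn:76a}) as in the $\NCord$-setting).
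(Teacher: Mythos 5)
Your proposal is correct and is exactly the argument the paper intends: it explicitly leaves the proof of Theorem \ref{thm:77} to the reader as a ``mutatis mutandis'' repetition of the proof of Theorem \ref{thm:61}, which is precisely what you carry out, with the only change being the child count $2n-1$ in place of $n+1$.
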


\vspace{6pt}

\begin{corollary}    \label{cor:78}
In the framework and notation of Theorem \ref{thm:77}, the sequence of expectations
$\bigl( \, E[ Z_n ] \, \bigr)_{n=1}^{\infty}$ satisfies the recursion
\begin{equation}  \label{eqn:78a}
E[ Z_n ] 
=  \frac{(2n-1) + ( \alpha - \beta)}{2n-1} \cdot E[ Z_{n-1} ] 
+ \frac{q \, \alpha + ((2n-1)-q) \, \beta}{2n-1},
\ \ n \geq 2.
\end{equation}
\hfill  $\square$
\end{corollary}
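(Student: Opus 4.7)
The plan is to mimic exactly the derivation of Corollary \ref{cor:62} from Theorem \ref{thm:61}, adjusting the normalizations to fit the $\NCord_2$-setting. That is, I will differentiate the Laplace-transform recursion \eqref{eqn:77a} of Theorem \ref{thm:77} with respect to $t$, evaluate at $t=1$, and then translate the derivatives back into expectations via the formula \eqref{eqn:76b} recorded in Remark \ref{rem:76}.

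Concretely, differentiating \eqref{eqn:77a} yields, after a short application of the Leibniz rule, an expression for $L_n'(t)$ involving $L_{n-1}(t)$, $L_{n-1}'(t)$, and $L_{n-1}''(t)$. The $L_{n-1}''(t)$-term carries the factor $(t^{\alpha+1} - t^{\beta+1})$, which vanishes at $t=1$; the $L_{n-1}(t)$-term picks up the coefficient obtained by differentiating $q\,t^{\alpha} + ((2n-1)-q)\,t^{\beta}$ at $t=1$, namely $q\alpha + ((2n-1)-q)\beta$; and the $L_{n-1}'(t)$-term collects the coefficient $(q + ((2n-1)-q)) + ((\alpha+1) - (\beta+1)) = (2n-1) + (\alpha - \beta)$. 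So evaluating at $t=1$ gives
\begin{equation*}
L_n'(1) = \bigl( q\alpha + ((2n-1)-q)\beta \bigr)\, L_{n-1}(1) + \bigl( (2n-1) + (\alpha-\beta) \bigr)\, L_{n-1}'(1).
\end{equation*}

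To finish, I substitute the normalizations $L_n(1) = (2n-1)!!$ and $L_{n-1}(1) = (2n-3)!!$ from \eqref{eqn:1-3bb}, together with $L_n'(1) = E[Z_n]\cdot(2n-1)!!$ and $L_{n-1}'(1) = E[Z_{n-1}]\cdot(2n-3)!!$ from \eqref{eqn:76b}. Dividing both sides of the resulting identity by $(2n-1)!! = (2n-1)\cdot(2n-3)!!$ immediately produces the recursion \eqref{eqn:78a} announced in the statement. There is no genuine obstacle here: the argument is a purely mechanical transcription of the proof of Corollary \ref{cor:62}, with the only substantive change being that the factor $n+1$ appearing in the $\NCord$-setting (reflecting $|\NCord(n)| = (n+1)!/2$) is replaced throughout by the factor $2n-1$ arising from $|\NCord_2(2n)| = (2n-1)!!$.
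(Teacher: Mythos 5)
Your proposal is correct and is exactly the argument the paper intends: Corollary \ref{cor:78} is stated in the paper with its proof left to the reader as a ``mutatis mutandis'' repetition of the proof of Corollary \ref{cor:62}, and your differentiation of \eqref{eqn:77a} at $t=1$ followed by the substitutions $L_n'(1) = E[Z_n]\cdot(2n-1)!!$ and $L_{n-1}'(1) = E[Z_{n-1}]\cdot(2n-3)!!$ is precisely that repetition, carried out correctly.
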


\vspace{10pt}

\subsection{Outer pairs and interval pairs -- proof of Theorem \ref{thm:110}.}
\label{subsection:7-3}

$\ $

\noindent
In this subsection we take on the random variables
$\bigl( \OuT_n : \NCord_2(2n) \to \bR \bigr)_{n=1}^{\infty}$
and $\bigl( \,\InT_n : \NCord_2(2n) \to \bR \, \bigr)_{n=1}^{\infty}$
that were 
brought to attention in Subsection \ref{subsection:1-3}, and we give the proof
of Theorem \ref{thm:110} concerning the expectations of these random variables.
The latter theorem will actually come out as an easy consequence of the 
considerations made in Subsection \ref{subsection:7-2}, due to the following 
lemma.

\vspace{6pt}

\begin{lemma}   \label{lemma:79}
$1^o$ The sequence of random variables
$\bigl( \InT_n : \NCord_2(2n) \to \bR \bigr)_{n=1}^{\infty}$ 
is pair-recursive of the second kind, with input $( 0, 1; 0)$.

\vspace{6pt}

\noindent
$2^o$ The sequence of random variables
$\bigl( \OuT_n : \NCord_2(2n) \to \bR \bigr)_{n=1}^{\infty}$ 
is pair-recursive of the second kind, with input $( 1, 0; 1)$.
\end{lemma}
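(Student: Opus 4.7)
The approach is to mirror the analysis of outer blocks from Example \ref{example:212}, but in the simpler setting where every $\NCord_2$-child arises by inserting one new interval pair at the top of the ordering. Given $n \geq 2$ and $( \rho, v ) \in \NCord_2 (2n-2)$, the description from Notation \ref{def:74} lists $\CPair ( \rho , v ) = \{ ( \pi_m, u_m ) : 1 \leq m \leq 2n-1 \}$, where $( \pi_m, u_m )$ is the unique child with $J( \pi_m, u_m ) = \{ m, m+1 \}$. Unpacking Definition \ref{def:72}, $\pi_m$ consists of the new pair $\{ m, m+1 \}$ together with the images $\{ \phi (p), \phi (q) \}$ of the pairs $\{ p, q \} \in \rho$ under the unique increasing bijection $\phi : \{ 1, \ldots , 2n-2 \} \to \{ 1, \ldots , 2n \} \setminus \{ m, m+1 \}$. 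For each of the two statistics, I will express the value on $( \pi_m, u_m )$ in terms of the value on $( \rho, v )$, and then choose $\CPair_o ( \rho, v )$ accordingly.

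\textbf{Part $1^o$.} The added pair $\{ m, m+1 \}$ is always an interval pair of $\pi_m$, giving an automatic $+1$. A short case analysis on the positions of $p, q$ relative to $m$ shows that, for the lifted pairs $\{ \phi(p), \phi(q) \}$, an interval pair $\{ p, p+1 \} \in \rho$ remains an interval pair of $\pi_m$ unless $p+1 = m$ (in which case it becomes $\{ m-1, m+2 \}$), while no non-interval pair of $\rho$ lifts to an interval pair. Hence $\InT_n ( \pi_m, u_m ) = \InT_{n-1} ( \rho, v ) + 1$ when $\{ m-1, m \} \notin \rho$, and $\InT_n ( \pi_m, u_m ) = \InT_{n-1} ( \rho, v )$ when $\{ m-1, m \} \in \rho$. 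Defining $\CPair_o ( \rho, v ) := \{ ( \pi_m, u_m ) : \{ m-1, m \} \in \rho \}$ verifies conditions (1) and (2) of (\ref{eqn:75a}) with $( \alpha, \beta ) = (0, 1)$. Moreover $m \mapsto \{ m-1, m \}$ is a bijection from qualifying values $m \in \{ 2, \ldots , 2n-2 \}$ onto the set of interval pairs of $\rho$, so $| \CPair_o ( \rho, v ) | = \InT_{n-1} ( \rho, v )$, giving condition (3) with $q = 0$.

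\textbf{Part $2^o$.} Because $\phi$ is order-preserving, the nesting relation among the lifted pairs of $\pi_m$ matches that on $\rho$, and the interval pair $\{ m, m+1 \}$ cannot enclose any lifted pair. A direct check shows that $\{ \phi(p), \phi(q) \}$ encloses $\{ m, m+1 \}$ iff $p < m \leq q$. It follows that a lifted pair is outer in $\pi_m$ iff its preimage was outer in $\rho$, and that $\{ m, m+1 \}$ itself is outer in $\pi_m$ iff no pair $\{ p, q \} \in \rho$ satisfies $p < m \leq q$. Hence $\OuT_n ( \pi_m, u_m ) = \OuT_{n-1} ( \rho, v ) + 1$ in the latter case and $\OuT_n ( \pi_m, u_m ) = \OuT_{n-1} ( \rho, v )$ otherwise, so taking $\CPair_o ( \rho, v )$ to be the set of children whose newly inserted pair is outer gives conditions (1) and (2) of (\ref{eqn:75a}) with $( \alpha, \beta ) = ( 1, 0 )$.

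\textbf{Main obstacle.} The remaining step, and the one requiring the most care, is the count $| \CPair_o ( \rho, v ) | = \OuT_{n-1} ( \rho, v ) + 1$ in Part $2^o$. The plan is to invoke the outer-frame description of a non-crossing partition already used in Example \ref{example:212}: writing the outer pairs of $\rho$ as $W_i = \{ a_i, b_i \}$ for $i = 1, \ldots , s$ with $a_1 = 1$, $b_s = 2n-2$ and $a_{i+1} = b_i + 1$ (so $s = \OuT_{n-1} ( \rho, v )$), every pair $\{ p, q \} \in \rho$ lies entirely inside some $W_i$-range, and hence the condition $p < m \leq q$ forces $a_i < m \leq b_i$ for some $i$. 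Consequently the values of $m \in \{ 1, \ldots , 2n-1 \}$ that are not crossed are exactly $\{ a_1, \ldots , a_s \} \cup \{ 2n-1 \}$, whose cardinality is $s + 1 = \OuT_{n-1} ( \rho, v ) + 1$, completing the verification with $q = 1$.
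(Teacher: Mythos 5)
Your proposal is correct and follows essentially the same route as the paper: both identify, for each child $(\pi_m,u_m)$, whether the inserted pair $\{m,m+1\}$ destroys an interval pair (Part $1^o$) or is itself outer (Part $2^o$), and both count the distinguished children via the interval pairs of $\rho$, respectively its ``outer frame'' $W_1,\ldots,W_s$. The only difference is one of presentation: where the paper defers the details of Part $2^o$ to Example \ref{example:212} and to pictures, you spell out the effect of the relabeling bijection $\phi$ explicitly, which is a welcome but not essentially different elaboration.
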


\begin{proof}
$1^o$ Pick an $n \geq 2$ and a $( \rho , v ) \in \NCord_2 (2n-2)$, 
in connection to which we verify the conditions (1), (2), (3) stated in 
Definition \ref{def:75}.  Let $m$ be the number of interval pairs
of $\rho$, and let us explicitly denote these pairs as 
$\{ i_1, i_1 + 1 \}, \ldots , \{i_m, i_m + 1 \}$.

Now let us look at a $(\pi,u) \in \CPair (\rho, v) \subseteq \NCord_2(2n)$ (a child
of $(\rho, v )$ in the $\NCord_2$-tree), and let $V := J( \pi , u ) \in \pi$
be the interval pair which is appended to $\rho$ in order to go from 
$( \rho, v )$ to $( \pi , u )$.  A moment's thought shows that we have:
\begin{equation}   \label{eqn:79a}
\InT_n (\pi,u) = \left\{  \begin{array}{ll}
\InT_{n-1}(\rho,v), & \mbox{ if $V$ is inserted into one of the}     \\
                    & \mbox{$\ $ pairs $\{ i_1, i_1 + 1 \}, \ldots , 
                                      \{i_m, i_m + 1 \}$ of $\rho$,} \\
\InT_{n-1}(\rho,v) + 1,   & \mbox{ otherwise.}
\end{array}  \right.
\end{equation}
This verifies the conditions (1) and (2) from (\ref{eqn:75a}), 
with $\alpha = 0$ and $\beta = 1$.  We note, moreover, that the number of
possibilities counted in the first branch on the right-hand side of (\ref{eqn:79a}) 
is precisely equal to $\InT_{n-1} ( \rho , v )$ (which is our $m$); this shows 
that the condition (\ref{eqn:75a})(3) is also fulfilled, with $q=0$.

In order to illustrate the discussion made in the preceding paragraph, 
consider an example when $n=8$ and $( \rho , v ) \in \NCord_2 (14)$ has
underlying pair-partition
 \[ 
    \rho = \begin{tikzpicture}[scale=0.5,baseline={2*height("$=$")}]
        \draw (0,0) -- (0,1.5) -- (5,1.5) -- (5,0);
        \draw (1,0) -- (1,1) -- (2,1) -- (2,0);
        \draw (3,0) -- (3,1) -- (4,1) -- (4,0);
        \draw (6,0) -- (6,1) -- (7,1) -- (7,0);
        \draw (8,0) -- (8,2) -- (13,2) -- (13,0);
        \draw (9,0) -- (9,1.5) -- (12,1.5) -- (12,0);
        \draw (10,0) -- (10,1) -- (11,1) -- (11,0);
    \end{tikzpicture} \in NC_2(14)                    
\]
(that is,
$\rho = \bigl\{ \, \{1,6\}, \, \{2,3\}, \, \{4,5\}, \, \{7,8\}, 
\, \{9, 14\}, \, \{10,13\}, \, \{11,12\} \, \bigr\} \in NC_2 (14)$).
\noindent
This $\rho$ has $4$ interval pairs: $\{2,3\}, \, \{4,5\}, \, \{7,8\}, \, \{11,12\}$.  
The next picture shows, in dotted lines, how an interval pair ``$V = J( \pi, u )$'' 
can be inserted into one of these interval pairs of $\rho$. 
    \[ \begin{tikzpicture}[scale=0.5]
        \draw (0,0) -- (0,2) -- (9,2) -- (9,0);
        \draw (1,0) -- (1,1.5) -- (4,1.5) -- (4,0);
        \draw[dashed] (2,0) -- (2,1) -- (3,1) -- (3,0);
        \draw (5,0) -- (5,1.5) -- (8,1.5) -- (8,0);
        \draw[dashed] (6,0) -- (6,1) -- (7,1) -- (7,0);
        \draw (10,0) -- (10,1.5) -- (13,1.5) -- (13,0);
        \draw[dashed] (11,0) -- (11,1) -- (12,1) -- (12,0);
        \draw (14,0) -- (14,2.5) -- (21,2.5) -- (21,0);
        \draw (15,0) -- (15,2) -- (20,2) -- (20,0);
        \draw (16,0) -- (16,1.5) -- (19,1.5) -- (19,0);
        \draw[dashed] (17,0) -- (17,1) -- (18,1) -- (18,0);
    \end{tikzpicture} 
    \]
In each of the $4$ cases shown in dotted lines, the resulting 
$( \pi , u ) \in \NCord_2 (16)$ will have 
$\InT_8 ( \pi , u ) = \InT_7 ( \rho, v )$.  (For instance if the 
inserted pair is $V = \{ 3,4 \}$, then $V$ itself is an interval pair 
of $\pi$, but the former interval pair $\{ 2,3 \} \in \rho$ has now 
become $\{ 2,5 \} \in \pi$, and is lost from the tallying of interval 
pairs of $\pi$.)  On the other hand, if the inserted pair $V$ is not 
one of those shown in dotted lines in the picture, then the resulting
$( \pi, u ) \in \NCord_2 (8)$ will have 
$\InT_8 ( \pi , u ) = \InT_7 ( \rho, v ) + 1 = 5.$  This is because, 
in addition to $V$, the pair-partition $\pi$ will have $4$ interval pairs
that are inherited from $\rho$.

\vspace{6pt}

\noindent
$2^o$ The proof of this part goes on similar lines to the one for $1^o$, 
and is also very similar to the arguments presented in connection to 
Example \ref{example:212} earlier on.  Because of that, we will omit the 
details and only briefly explain how the requirements made in
Definition \ref{def:75} get to be fulfilled.  

So let us pick an $n \geq 2$ and a $( \rho , v ) \in \NCord_2 (2n-2)$.
Direct analysis, similar to the one from Example \ref{example:212}
shows that for $( \pi, u ) \in \CPair ( \rho, v )$ we have:
\begin{equation}   \label{eqn:79b}
\OuT_n (\pi,u) = \left\{  \begin{array}{ll}
\OuT_{n-1}(\rho,v)+1, & \mbox{ if $J( \pi,u)$ is an outer pair of $\pi$,} \\
\OuT_{n-1}(\rho,v),   & \mbox{ otherwise.}
\end{array}  \right.
\end{equation}
Thus the set 
\[
\CPair_o ( \rho , v ) := \{ ( \pi,u ) \in \CPair ( \rho , v ) \mid
\mbox{$J( \pi,u)$ is an outer pair of $\pi$} \}
\]
will fulfill conditions (1) and (2) from Definition \ref{def:75}, with 
$\alpha = 1$ and $\beta = 0$.
The remaining condition (3) amounts to the fact that, 
in (\ref{eqn:79b}), there are exactly $\OuT_{n-1}(\rho,v)+1$ choices of 
$( \pi , u )$ for which $J( \pi , u )$ is an outer pair of $\pi$.  This is 
indeed true, because an interval pair appended to $\rho$ is outer precisely
when it is inserted in between two consecutive outer pairs of $\rho$, or at 
one of the two ends (left or right) of the picture of $\rho$, and the number 
of ways to do this is equal to 
$\bigl( \OuT_{n-1}(\rho,v) - 1 \bigr) + 2 = \OuT_{n-1} ( \rho , v ) + 1$, 
as desired.

For illustration, consider again the example where $n=8$ and
$( \rho, v ) \in \NCord (14)$ has the same underlying $\rho \in NC_2 (14)$
used to illustrate the proof of part $1^o$ of the lemma.
There are $4$ possible ways to append to $\rho$ an interval pair which 
is outer in the resulting $\pi \in NC_2 (16)$; these $4$ possibilities are 
indicated with dotted lines in the next picture. 
    \[ \begin{tikzpicture}[scale=0.5]
        \draw[dashed] (0,0) -- (0,1) -- (1,1) -- (1,0);
        \draw (2,0) -- (2,1.5) -- (7,1.5) -- (7,0);
        \draw (3,0) -- (3,1) -- (4,1) -- (4,0);
        \draw (5,0) -- (5,1) -- (6,1) -- (6,0);
        \draw[dashed] (8,0) -- (8,1) -- (9,1) -- (9,0);
        \draw (10,0) -- (10,1) -- (11,1) -- (11,0);
        \draw[dashed] (12,0) -- (12,1) -- (13,1) -- (13,0);
        \draw (14,0) -- (14,2) -- (19,2) -- (19,0);
        \draw (15,0) -- (15,1.5) -- (18,1.5) -- (18,0);
        \draw (16,0) -- (16,1) -- (17,1) -- (17,0);
        \draw[dashed] (20,0) -- (20,1) -- (21,1) -- (21,0);
    \end{tikzpicture} 
    \]
As seen in the picture, the $4$ dotted possibilities correspond precisely
to the $2$ spaces between the $3$ outer pairs of $\rho$, plus the 
space to the left of $\{ 1,6 \}$ (leftmost outer pair of $\rho$)
and the one to the right of $\{ 9,14 \}$ (rightmost outer pair of $\rho$).
\end{proof}

\vspace{10pt}

\begin{ad-hoc-item}  \label{proof:710}
{\bf Proof of Theorem \ref{thm:110}.} 
$1^o$ In view of Lemma \ref{lemma:79}.1, we can invoke the recursion (\ref{eqn:78a}) 
found in Corollary \ref{cor:78}, where we put $\alpha = 0$, $\beta = 1$, $q =0$.
The said recursion becomes
\begin{equation}   \label{eqn:710a}
E[ \InT_n ] = \frac{2n-2}{2n-1} \cdot E[ \InT_{n-1} ] + 1, \ \ n \geq 2;
\end{equation}
from here, an easy induction verifies the claim that 
$E[ \InT_n ] = (2n+1)/3$ for all $n \in \bN$.

\vspace{6pt}

\noindent
$2^o$ In the same vein as in $1^o$ above, we now use the recursion (\ref{eqn:78a})
with specialization $\alpha = 1$, $\beta = 0$, $q =1$, and this gives:
\begin{equation}   \label{eqn:710b}
E[ \OuT_n ] = \frac{2n}{2n-1} \cdot E[ \OuT_{n-1} ] 
+ \frac{1}{2n-1}, \ \ n \geq 2.
\end{equation}
When examining the formula (\ref{eqn:710b}), it is useful to observe that it can 
be rewritten as
\[
E[ \OuT_n ] +1  = \frac{2n}{2n-1} \cdot \bigl( \, E[ \OuT_{n-1} ] + 1 \, \bigr), 
\ \ n \geq 2;
\]
from here, an immediate induction establishes the fact that for every $n \in \bN$
we have 
\[
E [ \OuT_n ] + 1 = \frac{2 \cdot 4 \cdots (2n)}{1 \cdot 3 \cdots (2n-1)}
= \frac{2^n \cdot n!}{ (2n-1)!! },
\]
which leads to the formula (\ref{eqn:110a}) in the statement of
Theorem \ref{thm:110}.2.
The asymptotics $\bE[ \OuT_n ] \sim \sqrt{\pi n}$ for $n \to \infty$ follows
from (\ref{eqn:110a}) via an easy application of Stirling's approximation formula.
\hfill $\square$
\end{ad-hoc-item} 

\vspace{0.5cm}
 
\section{Expectation of the ``area'' statistic on 
\texorpdfstring{$\NCord_2 (2n)$}{NCord2(2n)}}
\label{section:8}

\setcounter{equation}{0}

In this section we consider the framework from Section \ref{subsection:1-4} of 
the Introduction, and we give the proof of Theorem \ref{thm:111}, concerning the 
expectation of the area-measuring random variable $\Area_n : \NCord_2 (2n) \to \bR$.
Recall that the latter random variable was introduced by putting 
$\Area_n ( \pi, u ) := \area ( \pi )$, the area in between the horizontal axis and 
the Dyck path naturally associated to $\pi$.  In the next lemma we give an alternate
description for what is $\area ( \pi )$, which does not refer to Dyck paths.

\vspace{6pt}

\begin{lemma}   \label{lemma:81}
Let $n$ be in $\bN$.  One has that:
\[
( \, \diamondsuit_n \, )  \hspace{2cm}
\area ( \pi ) = \sum_{V \in \pi}  \max (V) - \min (V),
\ \ \forall \, \pi \in NC_2 (2n).
\hspace{2cm}  \mbox{$\ $}
\]
\end{lemma}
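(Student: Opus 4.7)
The strategy is to decompose the piecewise linear function $f_\pi$ as a sum of elementary ``tent'' functions indexed by the pairs of $\pi$, and then to integrate term by term. For each pair $V = \{i,j\} \in \pi$ with $i < j$, define $g_V : [0,2n] \to [0,\infty)$ as the continuous piecewise linear function which is $0$ on $[0,i-1]$, has slope $+1$ on $[i-1,i]$, is constantly equal to $1$ on $[i,j-1]$, has slope $-1$ on $[j-1,j]$, and is $0$ on $[j,2n]$. I then plan to verify that
\[
f_\pi \;=\; \sum_{V \in \pi} g_V
\]
by comparing the two sides on each unit subinterval $[m-1,m]$: both are continuous, both vanish at $0$, and on $[m-1,m]$ the slope of each $g_V$ is $+1$ if $m = \min(V)$, $-1$ if $m = \max(V)$, and $0$ otherwise, so the slope of the right-hand side matches the slope of $f_\pi$ prescribed in (\ref{eqn:1-5a}).

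The second step is the integration. For a single pair $V = \{i,j\}$ the region under $g_V$ is a trapezoid of parallel sides of lengths $j - i - 1$ and $j - i + 1$ and unit height, giving
\[
\int_0^{2n} g_V(t)\,dt \;=\; (j - i) \;=\; \max(V) - \min(V).
\]
Summing over $V \in \pi$ and using the decomposition from the first step yields
\[
\area(\pi) \;=\; \int_0^{2n} f_\pi(t)\,dt \;=\; \sum_{V \in \pi} \bigl( \max(V) - \min(V) \bigr),
\]
which is the content of $( \diamondsuit_n )$.

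Neither step presents a real obstacle: the verification of the decomposition is a routine matching of slopes at integer points, and the tent integral is elementary. The only minor care needed is in bookkeeping the boundary cases $i = 1$ and $j = 2n$, where one of the ``flat'' pieces of $g_V$ is empty; the slope/endpoint check still goes through verbatim. (An alternative would be to observe that $\int_0^{2n} f_\pi = \sum_{m=0}^{2n} f_\pi(m)$ since $f_\pi$ changes by $\pm 1$ on each unit interval, and then to note that $f_\pi(m) = \#\{V = \{i,j\} \in \pi : i \leq m < j\}$, so that swapping the order of summation gives the same identity; I would choose the tent decomposition for its transparency.)
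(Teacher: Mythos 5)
Your proof is correct, but it takes a genuinely different route from the paper's. The paper proves $(\diamondsuit_n)$ by induction on $n$, using the standard ``first-return'' decomposition of a non-crossing pair-partition: either $\{1,2n+2\}\in\pi$, in which case the Dyck path of $\pi$ sits on top of a trapezoid of area $2n+1$ over the path of the restriction to $\{2,\ldots,2n+1\}$, or $1$ is paired with some $2k$, in which case the path (and the sum) splits as a concatenation of the two restricted pieces. Your argument instead decomposes $f_\pi$ globally as a sum of tent functions $g_V$, one per pair, checks the identity $f_\pi=\sum_{V\in\pi} g_V$ by matching slopes on each unit interval against the prescription in (\ref{eqn:1-5a}), and integrates each trapezoidal tent to get $\max(V)-\min(V)$ directly. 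Both arguments are complete; yours avoids induction and case analysis entirely, and in fact never uses the non-crossing property (the identity between $\int_0^{2n}f_\pi$ and $\sum_V(\max V-\min V)$ holds for arbitrary pair-partitions of $\{1,\ldots,2n\}$ with $f_\pi$ defined by the same slope recipe), so it is slightly more general. The paper's induction, on the other hand, is aligned with the recursive Catalan-type structure that the rest of Section 8 exploits (e.g.\ the decomposition $\area(\pi)=\area(\pi_1)+\area(\pi_2)$ reappears implicitly in Lemma \ref{lemma:82}), which is presumably why the authors chose it. Either proof would serve the paper equally well.
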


\begin{proof}  By induction on $n$.  The statement $( \, \diamondsuit_1 \, )$ 
clearly holds, since the unique pair-partition $\pi = \{ V_1 \} \in NC_2 (2)$ has 
$\area ( \pi ) = 1 = \max (V_1) - \min (V_1)$.  For the rest of the proof we 
fix an $n \geq 1$, we assume that the statement $( \, \diamondsuit_m \, )$ was 
proved for every $1 \leq m \leq n$, and we verify that 
$( \, \diamondsuit_{n+1} \, )$ holds as well.  We divide the verification into
two cases.

\vspace{6pt}

\noindent
{\em Case 1. Consider a $\pi \in NC_2 (2n+2)$ such that $\{1, 2n+2 \}$ is a pair
of $\pi$.}

\vspace{3pt}

\noindent
In this case, we let $\pi_o \in NC_2 (2n)$ be the relabeled restriction of 
$\pi$ to $\{ 2,3, \ldots , 2n+1 \}$. If we write explicitly
$\pi = \{V_1, \ldots , V_n, V_{n+1} \}$ with $V_{n+1} = \{1, 2n+2\}$, then 
$\pi_o$ can be written as $\{V_1', \ldots , V_n'\}$ with 
$V_j'$ obtained by translating $V_j$ by 1 to the left (and thus having 
$\max (V_j') - \min (V_j') =  \max (V_j) - \min (V_j)$), for $1 \leq j \leq n$.
On the right-hand side of the formula $( \, \diamondsuit_{n+1} \, )$ we get:
\[
\sum_{V \in \pi} \max (V) - \min (V)
= \bigl( \, \sum_{j=1}^n \max (V_j) - \min (V_j) \, \bigr) 
            + \bigl( \, (2n+2) - 1 \, \bigr)      
\]
\[
= \bigl( \, \sum_{j=1}^n \max (V_j') - \min (V_j') \, \bigr) + (2n+1) 
= \area ( \pi_o ) + (2n+1),
\]
where at the third equality sign we invoked the induction hypothesis.

In order to complete the verification of this case, we are left to check that
$\area ( \pi ) = \area ( \pi_o ) + (2n+1)$.  But this is immediately seen by 
looking at the Dyck paths associated to $\pi$ and $\pi_o$.
Indeed, the path for $\pi$ can be obtained by taking the path for $\pi_o$ and placing 
it on top of the trapezoid with vertices at the points $(0,0)$, 
$(1,1), (2n+1,1)$ 
and $(2n+2, 0)$ of $\bZ^2$, where the latter trapezoid has area equal to $2n+1$.

\vspace{6pt}

\noindent
{\em Case 2. Consider a $\pi \in NC_2 (2n+2)$ where $1$ is paired 
with $2k$ for some $1 \leq k \leq n$.}

\vspace{3pt}

\noindent
In this case, we let $\pi_1 \in NC_2 (2k)$ and $\pi_2 \in NC_2 ( \, 2(n+1-k) \, )$ 
be the relabeled restrictions of $\pi$ to $\{1, \ldots , 2k \}$ and to
$\{ 2k+1, \ldots , 2n+2 \}$, respectively.  Upon looking at the pictures of the 
Dyck paths associated to $\pi, \pi_1$ and $\pi_2$, one immediately sees that
\begin{equation}   \label{eqn:81b}
\area ( \pi) = \area ( \pi_1 ) + \area ( \pi_2 ).
\end{equation}
The induction hypothesis applies to both $\pi_1$ and $\pi_2$, hence 
(\ref{eqn:81b}) can be continued with 
\[
= \Bigl( \, \sum_{V' \in \pi_1} \max (V') - \min (V') \, \Bigr)
+ \Bigl( \, \sum_{V'' \in \pi_2} \max (V'') - \min (V'') \, \Bigr),
\]
which is immediately seen to just be 
$\sum_{V \in \pi} \max (V) - \min (V)$, as required.
\end{proof}

\vspace{6pt}

In order to approach the area-measuring random variables $\Area_n$ by using 
the $\NCord_2$-tree, we prove another lemma, as follows.

\vspace{6pt}

\begin{lemma}   \label{lemma:82}
Let $n \geq 2$ be in $\bN$, let $( \rho , v )$ be in $\NCord_2 (2n-2)$,
and consider the set $\CPair ( \rho, v ) \subseteq \NCord_2 (2n)$ consisting 
of children of $( \rho, v )$ in the $\NCord_2$-tree 
(cf.~Notation and Remark \ref{def:74}).  One has that
\begin{equation}  \label{eqn:82a}
\sum_{ ( \pi, u ) \in \CPair ( \rho , v ) } \Area_n ( \pi, u )
= (2n-1) + (2n+1) \cdot \Area_{n-1} ( \rho, v ).
\end{equation}
\end{lemma}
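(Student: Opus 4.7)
The plan is to use Lemma \ref{lemma:81} to rewrite $\Area$ as a sum of $\max(V)-\min(V)$ over blocks $V$, and then track how this quantity changes as we pass from $(\rho,v)$ to each child in $\CPair(\rho,v)$. By Notation and Remark \ref{def:74}, $\CPair(\rho,v) = \{(\pi_m,u_m) : 1 \le m \le 2n-1\}$, where $(\pi_m,u_m)$ is the unique child with $J(\pi_m,u_m) = \{m,m+1\}$. Concretely, $\pi_m$ is obtained by taking the pairs of $\rho$, shifting them via the unique increasing bijection $\phi_m : \{1,\ldots,2n-2\} \to \{1,\ldots,2n\} \setminus \{m,m+1\}$, and appending the new pair $\{m,m+1\}$.

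First I would apply Lemma \ref{lemma:81} to compute
\[
\Area_n(\pi_m, u_m) = 1 + \sum_{W \in \rho} \bigl( \max \phi_m(W) - \min \phi_m(W) \bigr).
\]
The map $\phi_m$ is the identity on $\{1,\ldots,m-1\}$ and adds $2$ on $\{m,\ldots,2n-2\}$. Consequently, for $W = \{a,b\} \in \rho$ with $a<b$, one has $\max \phi_m(W) - \min \phi_m(W) = (b-a) + 2 \cdot \mathbf{1}_{\{a < m \le b\}}$, since the gap created at positions $m,m+1$ is felt exactly by pairs of $\rho$ that straddle position $m$. Setting $N_m := |\{W = \{a,b\} \in \rho : a < m \le b\}|$ and invoking Lemma \ref{lemma:81} once more, this gives
\[
\Area_n(\pi_m, u_m) = \Area_{n-1}(\rho,v) + 2 N_m + 1.
\]

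Next I would sum this identity over $m = 1, \ldots, 2n-1$, obtaining
\[
\sum_{(\pi,u) \in \CPair(\rho,v)} \Area_n(\pi,u) = (2n-1)\Area_{n-1}(\rho,v) + (2n-1) + 2\sum_{m=1}^{2n-1} N_m.
\]
The key computation is the double-counting step: swapping the order of summation yields
\[
\sum_{m=1}^{2n-1} N_m = \sum_{W = \{a,b\} \in \rho} \bigl| \{ m \in \{1,\ldots,2n-1\} : a < m \le b \} \bigr| = \sum_{W \in \rho} \bigl( \max(W) - \min(W) \bigr),
\]
which by Lemma \ref{lemma:81} equals $\Area_{n-1}(\rho,v)$. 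Substituting this collapses the right-hand side to $(2n+1)\Area_{n-1}(\rho,v) + (2n-1)$, as required.

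I do not expect any real obstacle here once Lemma \ref{lemma:81} is in hand: the mild subtlety lies only in checking the shift behavior of $\phi_m$ at the two boundary cases $m=1$ and $m=2n-1$ (where no pair of $\rho$ straddles $m$, giving $N_1 = N_{2n-1} = 0$, consistent with the insertion leaving the old pairs unshifted on one side). Everything else is a clean Fubini argument on the bijection between $\CPair(\rho,v)$ and insertion positions supplied by Notation and Remark \ref{def:74}.
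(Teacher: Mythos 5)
Your proof is correct and follows essentially the same route as the paper's: apply Lemma \ref{lemma:81} to each child $(\pi_m,u_m)$, observe that the span of each inherited pair grows by $2$ for exactly $\max(W)-\min(W)$ values of the insertion position $m$, and swap the order of summation to recover $(2n+1)\cdot \Area_{n-1}(\rho,v)+(2n-1)$. (Incidentally, your straddle condition $a<m\le b$ is the precise one; the paper's displayed inequalities for when the $+2$ occurs are shifted by one, but since only the cardinality $\max(V_j)-\min(V_j)$ of that range enters the computation, both arguments reach the same conclusion.)
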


\begin{proof} As observed in Notation and Remark \ref{def:74}, we can spell out
\[
\CPair ( \rho, v ) = \bigl\{ ( \pi_1, u_1 ), \ldots , ( \pi_{2n-1}, u_{2n-1} ) \bigr\},
\]
where for every $1 \leq m \leq 2n-1$ the ordered partition 
$( \pi_m , u_m ) \in \NCord_2 (2n)$ is uniquely determined by the requirements 
$\pairparent ( \pi_m , u_m ) = ( \rho , v )$ and $J( \pi_m, u_m ) = \{ m, m+1 \}$.
For further processing, let us fix an explicit writing $\rho = \{ V_1, \ldots , V_{n-1} \}$.
Then for every $1 \leq m \leq 2n-1$ we get an explicit writing
$\pi_m = \bigl\{ V_1^{(m)}, \ldots , V_{n-1}^{(m)}, \, \{m, m+1\} \, \bigr\},$
where $V_1^{(m)}, \ldots , V_{n-1}^{(m)}$, are the images of 
$V_1, \ldots , V_{n-1}$, respectively, under the unique order-preserving bijection 
from $\{ 1, \ldots , 2n-2 \}$ onto $\{ 1, \ldots , 2n \} \setminus \{m, m+1 \}$.
This gives:
\[
\sum_{ ( \pi,u ) \in \CPair ( \rho , v ) } \Area_n ( \pi,u)
= \sum_{m=1}^{2n-1} \mathrm{area} ( \pi_m ) \hspace{4.5cm}  \mbox{$\ $}                  
\]
\[ 
\hspace{2cm} 
= \sum_{m=1}^{2n-1} \Bigl( \, 1 + \sum_{j=1}^{n-1} 
               \max ( V_j^{(m)} ) - \min ( V_j^{(m)} ) \, \Bigr)  
               \ \mbox{ (by Lemma \ref{lemma:81}) }
\]
\begin{equation}  \label{eqn:82b}
= (2n-1) + \sum_{j=1}^{n-1} \Bigl( \, \sum_{m=1}^{2n-1} 
               \max ( V_j^{(m)} ) - \min ( V_j^{(m)} ) \, \Bigr).
\end{equation}

Let us now fix for the moment a $j \in \{ 1, \ldots , n-1 \}$, and let us examine the 
inside sum over $m$ in (\ref{eqn:82b}).  For every $1 \leq m \leq 2n-1$, the relation 
between the pair $V_j^{(m)} \in \pi_m$ and the pair $V_j \in \pi$ forces the quantity
$\max ( V_j^{(m)} ) - \min ( V_j^{(m)} )$ to be equal to
\begin{equation}   \label{eqn:82c}
\left\{   \begin{array}{l}
\bigl[ \, \max (V_j) - \min (V_j) \, \bigr] + 2, 
    \mbox{ if $\min (V_j) \leq m-2$ and $\max (V_j) \geq m -1$,}       \\
                      \\
\bigl[ \, \max (V_j) - \min (V_j) \, \bigr], \ \mbox{ otherwise.}   
\end{array}   \right.
\end{equation}
Summing over $m$ in (\ref{eqn:82c}) thus gives
\begin{align*}
\sum_{m=1}^{2n-1} \max ( V_j^{(m)} ) - \min ( V_j^{(m)} )
& = (2n-1) \cdot \bigl[ \,  \max (V_j) - \min (V_j) \, \bigr]
     + \sum_{m = \min ( V_j ) + 2}^{ \max (V_j) + 1 } 2   \\
& = (2n-1) \cdot \bigl[ \,  \max (V_j) - \min (V_j) \, \bigr]
     + ( \max (V_j) - \min (V_j) ) \cdot 2       \\
& = (2n+1) \cdot \bigl[ \, \max (V_j) - \min (V_j) \, \bigr],
\end{align*}
where at the second equality sign we observed that the summation range for 
$m$, on the preceding line, has $\max (V_j) - \min (V_j)$ terms.

We now unfix the index $j$ and replace the result of the preceding calculation
into the sum over $j$ from (\ref{eqn:82b}), 
in order to obtain the required formula (\ref{eqn:82a}):
\[
\sum_{ ( \pi,u ) \in \CPair ( \rho , v ) } \Area_n ( \pi,u)
 = (2n-1) + \sum_{j=1}^{n-1} \Bigl( \, 
 (2n+1) \cdot \bigl[ \, \max (V_j) - \min (V_j) \, \bigr] \, \Bigr)
 \]
 \[
 = (2n-1) + (2n+1) \cdot \Area_{n-1} ( \rho, v ),
 \]
 where at the latter equality sign we used Lemma \ref{lemma:81} in connection 
 to $( \rho , v )$.
\end{proof} 

\vspace{10pt}

\begin{ad-hoc-item}    \label{proof:83}
{\bf Proof of Theorem \ref{thm:111}.}
For every $n \in \bN$, let us put
\begin{equation}   \label{eqn:83a}
S_n := \sum_{ ( \pi, u ) \in \NCord (n) } A_n ( \pi, u ).
\end{equation}
We claim that the numbers $S_n$ satisfy the following recursion:
\begin{equation}   \label{eqn:83b}
S_n = (2n+1) \, S_{n-1} + (2n-1)!! \, , \ \ \forall \, n \geq 2.
\end{equation}
In order to prove (\ref{eqn:83b}), we do the usual trick of grouping the 
terms on the right-hand side of (\ref{eqn:83a}) according to what is the 
parent of $( \pi, u )$ in the $\NCord_2$-tree:
\begin{align*}
S_n 
& = \sum_{ ( \rho, v ) \in \NCord_2 (n-1) } 
\, \Bigl( 
\sum_{ ( \pi, u ) \in \CPair (\rho, v ) } A_n ( \pi, u ) \, \Bigr),   \\
& = \sum_{ ( \rho, v ) \in \NCord_2 (n-1) } 
\, \Bigl( (2n-1) + (2n+1) \cdot \Area_{n-1} ( \rho, v ) \, \Bigr)
          \mbox{ (by Lemma \ref{lemma:82})}                           \\
& = (2n-1) \cdot | \, \NCord_2 (n-1) \, | 
+ (2n+1) \cdot \sum_{ ( \rho, v ) \in \NCord_2 (n-1) } 
 \Area_{n-1} ( \rho, v )                                    \\
&  = (2n-1) \cdot (2n-3)!! +  (2n+1) \cdot S_{n-1},
\end{align*}
which gives the right-hand side of (\ref{eqn:83b}).

From (\ref{eqn:83b}), an easy induction on $n$ yields the exact formula
\begin{equation}   \label{eqn:83c}
S_n = (2n+1)!! \, \Bigl( \, \frac{1}{3} + \frac{1}{5} 
+ \cdots + \frac{1}{2n+1} \, \Bigr), \ \ n \geq 1 \text{.}
\end{equation}
Hence the formula for the expectation of $\Area_n$ becomes
\[
E[ \, \Area_n \, ] = \frac{S_n}{(2n-1)!!}
= \Bigl( \, (2n+1)!! \, \sum_{k=1}^n \frac{1}{2k+1} \, \Bigr) / (2n-1)!!
= (2n+1) \, \sum_{k=1}^n \frac{1}{2k+1},
\]
as claimed in Equation (\ref{eqn:111a}) from the statement of the theorem.

In order to verify that $E[ \, \Area_n \, ] \sim n \, \ln n$,
we rewrite the above formula as
\begin{equation}   \label{eqn:83d}
E[ \, \Area_n \, ] =
\bigl( n + \frac{1}{2} \bigr) \, \sum_{k=1}^n \frac{1}{k + \frac{1}{2}},
\end{equation}
where the sum on the right-hand side is bounded above by $\sum_{k=1}^n 1/k = H_n$ 
and bounded below by $\sum_{k=1}^n 1/(k+1) > H_n - 1$.  This leads to the inequalities
\[
\frac{n \, ( H_n - 1)}{n \, \ln n}
\leq \frac{E[ \, \Area_n \, ]}{n \, \ln n} \leq
\frac{(n+1) \, H_n}{n \, \ln n},
\ \ n \geq 2,
\]
and implies via squeeze that 
$\lim_{n \to \infty} E[ \, \Area_n \, ] / (n \, \ln n) = 1$.
\hfill $\square$
\end{ad-hoc-item} 

\vspace{10pt}

\end{document}